\newcommand{\CMP}{{\textup{CMP}}}
\newcommand{\CMI}{{\textup{CMI}}}
\newcommand{\B}{{\mathrm{B}}}
\newcommand{\D}{{\sf D }}
\newcommand{\Hom}{{\textup{Hom}}}
\newcommand{\End}{{\textup{End}}}
\newcommand{\Ext}{{\textup{Ext}}}
\renewcommand{\mod}{{\textup{mod}}}
\newcommand{\rad}{{\textup{rad}}}
\newcommand{\add}{{\textup{add}}}
\newcommand{\injdim}{{\textup{inj.dim}}}
\newcommand{\projdim}{{\textup{proj.dim}}}
\newcommand{\ind}{\textup{ind}}
\newcommand{\raw}{\rightarrow}
\newcommand{\law}{\leftarrow}
\newcommand{\Cc}{\mathcal{C}}
\newcommand{\Dd}{\mathcal{D}}
\newcommand{\Ee}{\mathcal{E}}
\newcommand{\Kk}{\mathcal{K}}
\newcommand{\Tt}{\mathcal{T}}
\newcommand{\Mm}{\mathfrak{M}}
\newcommand{\Nn}{\mathcal{N}}
\newcommand{\Ss}{\mathcal{S}}
\newcommand{\Xx}{\mathfrak{X}}
\newtheorem{teo}{Theorem}[section]
\newtheorem{lema}[teo]{Lemma}
\newtheorem{coro}[teo]{Corollary}
\newtheorem{prop}[teo]{Proposition}
\newtheorem{ex}[teo]{Example}
\newtheorem{rema}[teo]{Remark}
\newtheorem{df}[teo]{Definition}
\newtheorem{theorem*}{Theorem}
\newtheorem{cor*}{Corollary}
\begin{document}\date{\today}
\title{On syzygies over 2-Calabi-Yau tilted algebras}
\author{Ana Garcia Elsener \and Ralf Schiffler}

\thanks{ The first author was supported by CONICET, PICT 2013-0799 ANPCyT. Both authors were supported by the NSF grant DMS-1254567, by the Universidad Nacional de Mar del Plata, and by the University of Connecticut.}

\begin{abstract} 
We characterize the syzygies and co-syzygies over 2-Calabi-Yau tilted algebras in terms of the Auslander-Reiten translation and the syzygy functor. We explore connections between the category of syzygies, the category of Cohen-Macaulay modules, the representation dimension of algebras and the Igusa-Todorov functions. In particular, we prove that the Igusa-Todorov dimensions of $d$-Gorenstein algebras are equal to $d.$

For   cluster-tilted algebras of Dynkin type $\mathbb{D}$, we give a geometric description of the stable Cohen-Macaulay category in terms of tagged arcs in the punctured disc.
 We also describe  the action of the syzygy functor in a  geometric way. This   description allows us to compute the Auslander-Reiten quiver of the stable Cohen-Macaulay category   using tagged arcs and geometric moves.
\end{abstract}

\maketitle

\section{Introduction} The concept of a 2-Calabi-Yau tilted algebra is  a natural generalization of the concept of a cluster-tilted algebra. 
A cluster-tilted algebra is the endomorphism algebra of a cluster-tilting object in the cluster category of a hereditary algebra. The 2-Calabi-Yau tilted algebras are obtained by replacing the cluster category by a 2-Calabi-Yau triangulated category.
Examples of 2-Calabi-Yau tilted algebras that are not cluster-tilted are the Jacobian algebras of the quivers with potential arising from a cluster algebra of non-acyclic type. In particular, this includes the Jacobian algebras from triangulated surfaces other than the disc with 0,1, or 2 punctures and the annulus without punctures.
 Cluster  categories and cluster-tilted algebras were introduced in \cite{BMRRT,CCS,BMR}. The generalization to 2-Calabi-Yau categories was given in \cite{KR}. Cluster algebras were defined in \cite{FZ} and their quivers with potentials in \cite{DWZ}. For triangulated surfaces, the Jacobian algebras were introduced in \cite{ABCP,L}.

From now on, let $\B$ be a 2-Calabi-Yau tilted algebra.  We study the stable category $\underline{\CMP}(\B)$ of Cohen-Macaulay modules over $\B$. A $\B$-module $M$ is (projectively) Cohen-Macaulay if $\Ext^i_{\B}(M,\B)=0$, for all $i>0$.
It was proved in \cite{KR} that $\B$ is  $d$-Gorenstein, with $d=0$ or $1$, and that the stable category of Cohen-Macaulay modules is 3-Calabi-Yau. 
 We denote by $\ind\,\underline{\CMP}(\B)$ the set of  indecomposable objects in  $\underline{\CMP}(\B)$.
 Let $\Omega $ be the syzygy functor and $\tau$ the Auslander-Reiten translation. Our first main result gives the following characterization.

\begin{theorem*}   Let $M$ be an indecomposable $\B$-module. Then the following statements are equivalent.
\begin{itemize}
\item[(1)] $M$ is a non-projective syzygy,
\item[(2)] $M\in \ind\, \underline{\CMP}(\B)$,
\item[(3)] $\Omega^{2}\tau M \simeq M$,
\item[(4)] $M$ is non-projective and $\Omega^{-2} M \simeq \tau M$.
\end{itemize}
\end{theorem*}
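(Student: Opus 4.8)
The plan is to prove the cycle of implications $(1)\Rightarrow(2)\Rightarrow(3)\Rightarrow(4)\Rightarrow(1)$, using the two structural facts quoted from \cite{KR}: that $\B$ is $d$-Gorenstein with $d\le 1$, and that $\underline{\CMP}(\B)$ is a $3$-Calabi-Yau triangulated category in which the shift is $\Omega^{-1}$. For $(1)\Rightarrow(2)$: if $M=\Omega N$ is a non-projective syzygy, take a projective presentation $0\to M\to P\to N\to 0$ and apply $\Hom_\B(-,\B)$; since $\B$ is $1$-Gorenstein every module has finite projective dimension on the injective side, so $\Ext^i_\B(M,\B)$ agrees with $\Ext^{i+1}_\B(N,\B)$, and one checks these vanish for $i>0$ because $\Ext^{\gg 0}_\B(-,\B)=0$ forces the tail of the long exact sequence to vanish — so $M$ is Cohen-Macaulay, and it is indecomposable by hypothesis. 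The implication $(2)\Rightarrow(1)$ (which I would also record, as it is needed implicitly) is the standard fact that over a $1$-Gorenstein algebra every Cohen-Macaulay module is a first syzygy: resolve $M$ and note the kernel at the appropriate spot is a syzygy isomorphic to $M$ in the stable category.

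The heart of the argument is $(2)\Rightarrow(3)$. Here I would use that $\underline{\CMP}(\B)$ is $3$-Calabi-Yau with suspension $\Sigma = \Omega^{-1}$, so its Serre functor is $\Sigma^3 = \Omega^{-3}$. On the other hand, for a Gorenstein algebra the Auslander-Reiten translation on the stable category of Cohen-Macaulay modules is classically $\tau = \Omega^2\nu$ up to the natural adjustments, where $\nu$ is the Nakayama functor; combining this with the fact that the AR-translate in a Hom-finite Krull-Schmidt triangulated category with Serre functor $S$ is $\tau = S\circ\Sigma^{-1}$, we get $\tau = \Omega^{-3}\circ\Omega = \Omega^{-2}$ as endofunctors of $\underline{\CMP}(\B)$. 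Rearranging gives $\Omega^2\tau M\simeq M$ in the stable category, which is $(3)$; and reading the same identity as $\Omega^{-2}M\simeq \tau M$ gives $(4)$, with non-projectivity of $M$ coming from $M\in\ind\,\underline{\CMP}(\B)$ being a non-projective object (a projective Cohen-Macaulay module is zero in the stable category, hence not in $\ind\,\underline{\CMP}(\B)$ under the usual convention). Finally $(4)\Rightarrow(1)$ is immediate: $\Omega^{-2}M\simeq\tau M$ exhibits $M\simeq\Omega^2\tau^{-1}M$ (stably), so $M$ is stably a second syzygy, hence a syzygy, and it is non-projective by assumption; one then lifts the stable isomorphism to an honest syzygy module in the same stable class, absorbing projective summands.

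The main obstacle I anticipate is bookkeeping the identification $\tau \simeq \Omega^{-2}$ precisely: one must be careful about whether $\Omega$ is taken in $\mod\B$ or in $\underline{\CMP}(\B)$ (they agree on Cohen-Macaulay modules up to projectives, but the syzygy of an arbitrary module need not be Cohen-Macaulay a priori — this is where $d$-Gorenstein with $d\le1$ is essential, since it bounds projective dimensions and makes $\Omega$ land in $\CMP(\B)$ after one step), and about the distinction between $\tau$ on $\mod\B$ and $\tau$ on the stable triangulated category. A clean way to sidestep the subtleties is to work entirely inside $\underline{\CMP}(\B)$: there $\Omega$ is an autoequivalence (the desuspension), $\tau$ is the composite of the Serre functor with the inverse suspension, and the $3$-Calabi-Yau property pins down the Serre functor as $\Omega^{-3}$, so $(3)$ and $(4)$ become the single formal identity $\tau=\Omega^{-2}$ in that category; the only real work is then transporting statements $(1)$ and $(2)$ between $\mod\B$ and $\underline{\CMP}(\B)$, which is exactly what the Gorenstein hypothesis is for. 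I would also double-check the boundary case $d=0$, i.e. $\B$ self-injective, where $\CMP(\B)=\mod\B$ and everything collapses to the known self-injective statement, as a sanity check on the functorial identities.
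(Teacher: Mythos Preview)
Your approach via the $3$-Calabi--Yau property of $\underline{\CMP}(\B)$ is different from the paper's, and it has a genuine gap at the crucial step $(2)\Rightarrow(3)$. The identity you extract from Serre duality, $\tau=\Omega^{-2}$, is an identity of autoequivalences of the triangulated category $\underline{\CMP}(\B)$: the $\tau$ there is the intrinsic AR translation $\widetilde\tau$ of that category, and the $\Omega^{-1}$ there is its suspension, i.e.\ the cokernel of an embedding of $M$ into a \emph{projective} $\B$-module (projectives being the injective objects of the Frobenius structure on $\CMP(\B)$). The theorem, however, is about the AR translation $\tau$ of $\mod\B$ and the ordinary (co)syzygy of $\mod\B$; in particular the $\Omega^{-1}$ in condition~(4) uses injective envelopes. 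For $M\in\CMP(\B)$ the module $\tau M$ lies in $\CMI(\B)$ (this is exactly Remark~2.3 in the paper) but not a priori in $\CMP(\B)$, so the AR sequence $0\to\tau M\to E\to M\to 0$ in $\mod\B$ need not be an almost split conflation in $\CMP(\B)$, and hence $\widetilde\tau M$ and $\tau M$ need not coincide. You flag this (``the distinction between $\tau$ on $\mod\B$ and $\tau$ on the stable triangulated category''), but your proposed fix --- ``work entirely inside $\underline{\CMP}(\B)$'' --- cannot work, because the statement to be proved lives in $\mod\B$. Your aside that ``classically $\tau=\Omega^2\nu$'' holds for \emph{selfinjective} algebras; it is not available here.

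The paper closes this gap by an entirely different mechanism. Lemma~3.1 uses the ambient $2$-CY category $\mathcal C$ --- specifically the triangle $T_1\to T_0\to X\to T_1[1]$ coming from a minimal $\add T$-approximation --- to produce, for any $\B$-module $N$, an exact sequence
\[
\tau^{-1}P_0\xrightarrow{\,f\,}\tau^{-1}N\longrightarrow P_1\longrightarrow P_0\longrightarrow N\longrightarrow 0
\]
in $\mod\B$, where $P_1\to P_0\to N\to 0$ is a minimal projective presentation. Taking $N=\tau M$ gives an epimorphism $M\twoheadrightarrow\Omega^2\tau M$ which is an isomorphism iff $f=0$; and $f=0$ because $\Hom_\B(\tau^{-1}P_0,M)\cong D\Ext^1_\B(M,P_0)=0$ by the AR formula and the hypothesis $M\in\CMP(\B)$. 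This argument never leaves $\mod\B$ and never invokes the $3$-CY property. (The paper does compute $\widetilde\tau=\Omega^{-2}$ from $3$-CY at the end of Section~5, but only as a consistency check, and is careful to write $\widetilde\tau$ rather than $\tau$.) In fact, once the theorem is proved one can deduce a posteriori that $\tau M\in\CMP(\B)$ and hence $\widetilde\tau=\tau$ on $\CMP(\B)$; but that is a corollary, not an input, so using it in your argument would be circular.
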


We obtain the following corollary  on selfinjective 2-Calabi-Yau tilted algebras.
\begin{cor*} 
 Suppose that $\B$ is selfinjective and such that the Nakayama functor has finite order $m$. Let $M$ be an indecomposable, non-projective $\B$-module. Then  $\tau^{2m} M\cong M$.
 In particular, if $\B$ is symmetric then $\tau^2 M\cong M$.
\end{cor*}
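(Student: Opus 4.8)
The plan is to deduce the corollary from the equivalence $(2)\Leftrightarrow(4)$ in the Theorem, combined with the classical description of the Auslander--Reiten translation of a selfinjective algebra through the syzygy and Nakayama functors. So the whole argument should be a short functorial manipulation sitting on top of the Theorem.

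First I would observe that, since $\B$ is selfinjective, the module ${}_\B\B$ is injective, so $\Ext^i_\B(M,\B)=0$ for every $\B$-module $M$ and every $i>0$. Hence \emph{every} indecomposable $\B$-module belongs to $\ind\,\underline{\CMP}(\B)$; in particular, for every indecomposable non-projective $M$ the Theorem (implication $(2)\Rightarrow(4)$) yields $\Omega^{-2}M\simeq\tau M$ in the stable category $\underline{\mod}\,\B$.

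Next I would recall the classical isomorphism of functors on the stable category of a selfinjective algebra, $\tau\cong\nu\Omega^{2}$ (equivalently $\nu\cong\tau\Omega^{-2}$), where $\nu=D\Hom_\B(-,\B)$ is the Nakayama functor, which is an autoequivalence commuting with $\Omega$. Feeding in the previous step, for every indecomposable non-projective $N$ we get $\nu N\cong\tau\Omega^{-2}N\cong\tau(\tau N)=\tau^{2}N$. Since $\tau$ and $\nu$ send indecomposable non-projective modules to indecomposable non-projective modules over a selfinjective algebra, this relation iterates to $\tau^{2m}M\cong\nu^{m}M$ for every indecomposable non-projective $M$. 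By hypothesis $\nu^{m}\cong\mathrm{id}$, so $\tau^{2m}M\simeq M$ in $\underline{\mod}\,\B$; and as $M$ and $\tau^{2m}M$ are both indecomposable and non-projective, this stable isomorphism lifts to an isomorphism of $\B$-modules, giving $\tau^{2m}M\cong M$. If $\B$ is symmetric then $\nu\cong\mathrm{id}$, so one may take $m=1$ and $\tau^{2}M\cong M$ follows at once (indeed the argument shows $\tau^{2}\cong\nu$ on $\underline{\mod}\,\B$ in general).

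The main obstacle is not conceptual but bookkeeping: one must state the selfinjective formula $\tau\cong\nu\Omega^{2}$ with the correct variance and its compatibility with $\Omega$, so that the chain of functorial isomorphisms above is legitimate, and one must use the (routine) fact that a stable isomorphism between indecomposable non-projective modules is a genuine isomorphism. Everything else is immediate from the Theorem.
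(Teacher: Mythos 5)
Your proposal is correct and follows essentially the same route as the paper: selfinjectivity makes every module Cohen--Macaulay so that the equivalence (a2)$\Leftrightarrow$(a4) gives $\Omega^{-2}M\simeq\tau M$, and combining this with the standard stable identification of the Nakayama functor with $\Omega^{-2}\tau$ (equivalently $\tau\cong\nu\Omega^{2}$, as you write) and iterating yields $\tau^{2m}M\cong M$, with the symmetric case as the specialization $\nu\cong\mathrm{id}$.
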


The representation dimension of cluster-tilted algebras has   been studied in \cite{GT} for the special case of cluster concealed algebra, thus cluster-tilted algebras whose corresponding cluster-tilting object is preprojective. The authors of \cite{GT} have shown  that in this case the representation dimension  is at most 3. Cluster concealed algebras may or may not be tame and, on the other hand, tame cluster-tilted algebras need not be cluster concealed.
 Using results of \cite{BO} and \cite{R}, we extend the result to tame cluster-tilted algebras. 
\begin{cor*}
 The representation dimension of a tame cluster-tilted algebra is at most 3.
\end{cor*}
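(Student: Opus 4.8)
The plan is to reduce, via our first main theorem, to the claim that tame cluster-tilted algebras are Cohen--Macaulay finite, and then to apply the ``finite Cohen--Macaulay type $\Rightarrow$ representation dimension $\le 3$'' principle. First I would dispose of two easy cases: if $\B$ is representation-finite then $\mathrm{repdim}\,\B\le 2$ by Auslander, and the self-injective cluster-tilted algebras are all representation-finite. So we may assume that $\B$ is tame, representation-infinite, and, by \cite{KR}, genuinely $1$-Gorenstein. Then $\injdim\,{}_\B\B\le 1$, so $\Ext^{j}_\B(-,\B)=0$ for all $j\ge 2$, hence $\Omega M$ is Cohen--Macaulay for every $\B$-module $M$; combined with the equivalence (1)$\Leftrightarrow$(2) of our first main theorem, this shows that every first syzygy over $\B$ lies in $\add(\B\oplus N)$, where $N$ is the direct sum of one copy of each indecomposable of $\underline{\CMP}(\B)$, provided the latter set is finite. (Note that over a $1$-Gorenstein algebra the torsionless modules are exactly the first syzygies, which are exactly the Cohen--Macaulay modules, so the requisite finiteness is that $\B$ be torsionless-finite.)

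It then remains to show that a tame cluster-tilted algebra is Cohen--Macaulay finite, which is where \cite{BO} enters. Using the classification of tame cluster-tilted algebras one argues type by type: the acyclic ones are hereditary, so $\underline{\CMP}(\B)=0$; those of type $\tilde{\mathbb{A}}$ are gentle --- they arise from triangulations of an annulus --- and gentle algebras are Cohen--Macaulay finite, with stable Cohen--Macaulay category a finite product of categories $\underline{\mod}\,k[x]/(x^{n_{i}})$; and the non-gentle tame types $\tilde{\mathbb{D}}$ and $\tilde{\mathbb{E}}$ are handled by the structural results of \cite{BO}, type $\tilde{\mathbb{E}}$ moreover comprising only finitely many algebras up to isomorphism. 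Once torsionless-finiteness is established, the bound $\mathrm{repdim}\,\B\le 3$ is exactly the conclusion of the criterion of \cite{R}.

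The principal obstacle is the Cohen--Macaulay finiteness of the non-gentle tame cluster-tilted algebras of types $\tilde{\mathbb{D}}$ and $\tilde{\mathbb{E}}$: unlike the type $\mathbb{D}$ case treated elsewhere in this paper via tagged arcs, there is no single combinatorial model covering all of them, so one must either argue type by type --- exploiting the skew-gentle/clannish structure in type $\tilde{\mathbb{D}}$ and a finite case analysis in type $\tilde{\mathbb{E}}$ --- or invoke \cite{BO} as a black box. Two further points should be checked: that the self-injective cluster-tilted algebras are indeed all representation-finite, so that the reduction to the $1$-Gorenstein case is legitimate, and that the Cohen--Macaulay condition $\Ext^{>0}_\B(M,\B)=0$ used here is exactly the hypothesis under which the bound of \cite{R} applies --- which holds because over a $d$-Gorenstein algebra the $\CMP$-modules coincide with the Gorenstein-projective (maximal Cohen--Macaulay) modules.
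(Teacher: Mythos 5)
Your proposal is correct and follows essentially the same route as the paper: the equivalence (a1)$\Leftrightarrow$(a2) of Theorem \ref{teo} identifies the indecomposable non-projective syzygies with $\ind\,\underline{\CMP}(\B)$, the finiteness of $\CMP(\B)$ for tame cluster-tilted algebras is taken from \cite{BO}, and Ringel's torsionless-finiteness criterion \cite{R} then gives representation dimension at most three. The preliminary reduction away from the selfinjective case and the optional type-by-type verification of Cohen--Macaulay finiteness are unnecessary, since these three ingredients already cover all tame cluster-tilted algebras uniformly (in the selfinjective case $\CMP$-finiteness simply forces representation-finiteness).
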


We also study the two Igusa-Todorov functions $\phi$ and $\psi$. These functions were introduced in \cite{IT} to study the relation between the representation dimension and the finitistic dimension of an algebra. 
The supremum of  $\phi$, or $\psi$ respectively,  is called the  Igusa-Todorov $\phi$-dimension, respectively $\psi$-dimension,  of the algebra.
 We obtain the following result in a more general setting, namely for $d$-Gorenstein algebras. This result has been proved independently in \cite{Ma}. 
\begin{theorem*}
 For any $d$-Gorenstein algebra, the Igusa-Todorov dimensions are both equal to $d$.
\end{theorem*}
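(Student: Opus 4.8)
The plan is to prove separately that the $\phi$-dimension of $\A$ is at least $d$ and that the $\psi$-dimension of $\A$ is at most $d$; since $\phi(M)\leq\psi(M)$ for every module $M$---whence the $\phi$-dimension is bounded above by the $\psi$-dimension---these two bounds force both Igusa--Todorov dimensions to equal $d$. For the lower bound I would apply the standard duality $D=\Hom_k(-,k)$ to the identity $\injdim\A_\A=d$, obtaining $\projdim{}_\A(D\A)=d$; thus $D\A$ has finite projective dimension exactly $d$, and since $\phi(M)=\psi(M)=\projdim M$ whenever $\projdim M<\infty$ (a basic property of the Igusa--Todorov functions, see \cite{IT}), we get $\phi(D\A)=d$, so the $\phi$-dimension of $\A$ is at least $d$.

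For the upper bound, the key remark is that $\injdim{}_\A\A=d$ is equivalent to $\Ext^i_\A(-,\A)=0$ for all $i>d$, so dimension shifting gives $\Ext^i_\A(\Omega^d M,\A)\cong\Ext^{i+d}_\A(M,\A)=0$ for every module $M$ and every $i\geq 1$; that is, $\Omega^d M\in\CMP(\A)$, and hence so are all of its direct summands. To bound $\phi$, recall that $\phi(M)$ is the least integer $n$ for which the endomorphism $\bar\Omega$ induced by the syzygy on the Igusa--Todorov group $K_1$ (the free abelian group on the isomorphism classes of indecomposable non-projective $\A$-modules) is injective on $\bar\Omega^n\langle M\rangle$, where $\langle M\rangle\subseteq K_1$ is generated by the indecomposable summands of $M$. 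Writing $M=\bigoplus_i M_i$ into indecomposables, the subgroup $\bar\Omega^d\langle M\rangle$ is generated by the classes of the modules $\Omega^d M_i$, which are Cohen--Macaulay, so it lies inside the subgroup of $K_1$ spanned by $\ind\,\underline{\CMP}(\A)$; and on $\underline{\CMP}(\A)$ the syzygy functor is an auto-equivalence---the inverse of the shift of the triangulated structure on the stable category of the Frobenius exact category $\CMP(\A)$---hence it permutes indecomposables and acts injectively on that subgroup. Therefore $\bar\Omega$ is injective on $\bar\Omega^d\langle M\rangle$, i.e.\ $\phi(M)\leq d$.

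Finally I would estimate the defect $\psi(M)-\phi(M)$. Write $n=\phi(M)\leq d$, so that $\psi(M)=n+\sup\{\projdim Z : Z\text{ an indecomposable summand of }\Omega^n M\text{ with }\projdim Z<\infty\}$. If $Z$ is such a summand with $\projdim Z=p<\infty$, then $\Omega^{d-n}Z$ is a direct summand of $\Omega^{d-n}(\Omega^n M)\cong\Omega^d M\oplus(\text{projective})$, hence Cohen--Macaulay, and it still has finite projective dimension; but a Cohen--Macaulay module of finite projective dimension is projective---one sees this by induction on the projective dimension, using that the syzygy of a Cohen--Macaulay module is again Cohen--Macaulay and that when the projective dimension is at most $1$ the presentation $0\to\Omega Z'\to P\to Z'\to 0$ splits because $\Ext^1_\A(Z',\Omega Z')$ is a direct summand of $\Ext^1_\A(Z',\A^k)=0$. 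Hence $\Omega^{d-n}Z$ is projective, so $p\leq d-n$, and therefore $\psi(M)\leq n+(d-n)=d$; thus the $\psi$-dimension of $\A$ is at most $d$. The only point that needs care is the syzygy bookkeeping in the second paragraph---that the syzygy defining the Igusa--Todorov functions, computed in $\mod\A$, agrees up to projective summands with the inverse shift on $\underline{\CMP}(\A)$, so that it genuinely permutes the non-projective Cohen--Macaulay indecomposables and thereby acts injectively on the relevant finitely generated subgroup of $K_1$; everything else reduces to dimension shifting and the fact that Cohen--Macaulay modules of finite projective dimension are projective.
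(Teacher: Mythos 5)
Your proposal is correct and follows essentially the same route as the paper's proof: dimension shifting to show $\Omega^d M\in\CMP(\Lambda)$, injectivity of the syzygy operator $L$ on the subgroup of the Igusa--Todorov group spanned by the non-projective Cohen--Macaulay indecomposables (since $\Omega$ is the inverse shift on $\underline{\CMP}$) to get $\phi(M)\le d$, the lower bound via $\phi(D\Lambda)=\projdim D\Lambda=d$, and the bound $\psi(M)\le d$ from the fact that a Cohen--Macaulay module of finite projective dimension is projective. The only cosmetic difference is that you prove this last fact by induction and a splitting argument, while the paper simply invokes that a nonzero object of $\underline{\CMP}$ has infinite projective dimension.
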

Thus for 2-Calabi-Yau tilted algebras, both  Igusa-Todorov dimensions are at most 1.
\smallskip

In the second part of the paper, we give a geometric realization of the stable Cohen-Macaulay category  $\underline{\CMP}(\B)$  for all 2-Calabi-Yau tilted algebras from surfaces without punctures as well as for those from the disc with one puncture. The unpunctured case corresponds to the gentle algebras introduced in \cite{ABCP} and the case of  the punctured disc to the cluster-tilted algebras of type $\mathbb{D}$. While the unpunctured case is very simple, the case of the punctured disc is less so and requires a refinement of the combinatorial  model of the cluster category of \cite{S}.
 In \cite{CGL}, it was proved that in this case, $\underline{\CMP}(\B)$ is equivalent to a union of stable module categories of selfinjective cluster-tilted algebras.
We describe the indecomposable objects in $\underline{\CMP}(\B)$ in Theorem~\ref{teofull} as arcs in the punctured disc and we also describe the action of the syzygy functor $\Omega$ on  these arcs in Theorem~\ref{unlema}.
Furthermore, we construct the Auslander-Reiten quiver of  $\underline{\CMP}(\B)$ in terms of arcs and elementary moves.

The paper is organized as follows. In section \ref{sect 2}, we recall basic facts about Gorenstein algebras and prove our result on the Igusa-Todorov dimensions. Section~\ref{sect 3} is devoted to the study of 2-Calabi-Yau tilted algebras and contains Theorem 1 and its corollaries. The geometric realizations of the stable Cohen-Macaulay categories are given in section~\ref{sect 4} for the unpunctured case and in section \ref{sect 5} for the punctured disc.

\section{Gorenstein Artin algebras}\label{sect 2}
Let $R$ be a commutative artinian ring. In this section we consider $\Lambda$ an Artin $R$-algebra, and $\mod \Lambda$ the category of finitely generated right $\Lambda$-modules. Given a $\Lambda$-module $M$, denote the kernel of a projective cover by $\Omega M$. $\Omega M$ is called the syzygy of $M$. We let $\Omega^0 M=M$. The projective dimension of $M$, $\projdim M$, is the minimal length among all finite projective resolutions of $M$, and $M$ has infinite projective dimension if there is no finite projective resolution. Similarly, the injective dimension $\injdim M$, is the minimal length among all finite injective resolutions of $M$, and $M$ has infinite injective dimension if there is no finite injective resolution. We denote by $D$ the duality functor $\Hom_R (-,J)$ where $J$ is the injective envelope of the direct sum of the simple modules, \cite[II.3] {ARS}. When $\Lambda$ is a finite dimensional $k$-algebra over an algebraically closed field, there exists a quiver $Q$ such that $\Lambda$ is the quotient of the path algebra $k Q$ by an admissible ideal $I$.
In this case, we denote by $e_i$ the idempotent associated to the lazy path at the vertex $i$, and by $P(i) = e_i \Lambda$  the corresponding indecomposable projective module, by  $I(i)=D(\Lambda e_i)$ the   indecomposable injective module and  by $S(i)$ the simple top of $P(i)$, see for example \cite[Chapter III]{ASS} or \cite[Chapter 2]{S14}.

\begin{df} An Artin algebra $\Lambda$ is said to be \emph{Gorenstein of dimension $d$} (d-Gorenstein) if $\projdim D(\Lambda)=\injdim \Lambda =d < \infty $.
\end{df}

\begin{df} A $\Lambda$-module $M$ is said to be \emph{projectively Cohen-Macaulay} if $\Ext_\Lambda^{i}(M,\Lambda)=0$ for all $i > 0$. A $\Lambda$-module $N$ is \emph{injectively Cohen-Macaulay} if $\Ext_\Lambda^{i}(D\Lambda,N)=0$ for all $i > 0$.   
\end{df}

Denote by $\CMP (\Lambda)$ the full subcategory of $\textup{mod}\,\Lambda$ of projectively Cohen-Macaulay modules. The category $\CMP(\Lambda)$ is an exact subcategory, it is Frobenius and its projective-injective objects are precisely the projective modules in $\mod \Lambda$. Correspondingly, the category of injectively Cohen-Macaulay modules $\CMI (\Lambda)$ is Frobenius and its projective-injective objects are the injective modules in $\mod \Lambda$. 
The stable category $\underline{\CMP}(\Lambda)$ is a triangulated category whose inverse shift is given by the usual syzygy operator in $\mod \Lambda$, $\Omega_\Lambda$. Dually, the category $\underline{\CMI}(\Lambda)$  is a triangulated category whose shift is given by the usual co-syzygy operator in $\mod \Lambda$,  $\Omega^{-1}_\Lambda$. 
 There are triangle equivalences between $\underline{\CMP}(\Lambda)$, $\underline{\CMI}(\Lambda)$ and the singularity category given by the  localization $\Dd^{b}(\Lambda)\Ss^{-1}$, where $\Ss$ is the set of morphisms whose cone lies in $\Kk^{b}(proj \Lambda)$. See \cite[Section 4.8]{Bu}. 
\begin{rema}\label{rema11} The Auslander-Reiten (AR) translations $\tau,\tau^{-1}$ in $\mod\Lambda$ induce  quasi-inverse triangle equivalences $\tau: \underline{\CMP} (\Lambda) \raw \underline{\CMI}(\Lambda) $ and $\tau^{-1}: \underline{\CMI}(\Lambda) \raw \underline{\CMP}(\Lambda) $, see \cite[Chapter X]{BeR}.
\end{rema}

\subsection{Igusa-Todorov functions}

The Igusa-Todorov functions, $\phi$ and $\psi$, were introduced in order to study the relation between the representation dimension and the finitistic dimension in the context of Artin algebras. In a sense, they generalize the concept of projective dimension, meaning that for a module $M$ of finite projective dimension we have $\projdim M = \phi(M)=\psi(M)$. The important change occurs when the projective dimension of a module is not finite. We only give the definitions of Igusa-Todorov functions. For further details, see \cite{IT}. Let $K_0$ be the abelian group generated by the symbols $[M]$ for each isomorphism class $M$ in $\mod \Lambda$, modulo the relations $[M \oplus N]=[M]+[N]$ and $[P]=0$ for $P$ projective. Define the linear morphism $L:K_0 \raw K_0$ via $L[M]=[\Omega M]$, where $\Omega$ is the syzygy operator. For $M$ in $\mod \Lambda$, consider the direct sum $M = \bigoplus\limits_{i=1}^{k} M_i$, where all $M_i$ are indecomposable. Denote by $L^{0}(M)=\langle[M_1],\ldots, [M_k]\rangle$ the  subgroup of $K_0$ generated by the symbols $[M_1],\ldots, [M_k]$, then denote by $L^{1}(M)=L(L^{0}[M])$, and  so on.
\begin{df} The \emph{first Igusa-Todorov function} is defined by $\phi(M)=t$, where $t$ is the smallest integer such that $L:L^{t+s}(M)\raw L^{t+s+1}(M)$ is an isomorphism for all $s\geqslant 0$.
\end{df}
\begin{df} The \emph{second Igusa-Todorov function} is given by $\psi(M)=\phi(M)+k$, where $k$ is the largest finite projective dimension in a summand of the module $\Omega^{\phi(M)}M$.
\end{df}
\begin{ex} Let $\Lambda$ be the algebra given by $kQ/rad^{2}$, where $Q$ is the quiver 
\[\xymatrix{1\ar@(l,d)[]\ar[r] & 2\ar[r] & 3}\]
\\
We compute $\phi(M)$ and $\psi(M)$ for the module $M=I(1)\oplus S(1)$. We have the following projective resolutions:
\\
\[\xymatrix@C9pt@R9pt{0\ar[rr] && 3\ar[rr]\ar@{->>}[rd] && {\begin{array}{c}
2 \\ 
3
\end{array} }\ar[rr]\ar@{->>}[rd] &&{\begin{array}{c}
1 \\ 
1 2
\end{array}}\ar[rr]&&I(1) \ar[rr]&&0\\
&& 
&3\ar@{^{(}->}[ru] &&2\ar@{^{(}->}[ru]}\]
$  $
\[\xymatrix@C8pt@R8pt{ \dots \ar[rr]&&
{\begin{array}{c}
1 \\ 
1 2
\end{array}} \oplus {\begin{array}{c}
2 \\ 
3 \\
\end{array} }\oplus 3\ar[rr]\ar@{->>}[rd]&&{\begin{array}{c}
1 \\ 
1 2
\end{array}} \oplus {\begin{array}{c}
2 \\ 
3
\end{array} }\ar[rr]\ar@{->>}[rd] &&{\begin{array}{c}
1 \\ 
1 2
\end{array}}\ar[rr]&&S(1) \ar[rr]&&0\\
&1 \oplus 2 \oplus 3 \ar@{^{(}->}[ru] && 1 \oplus 2 \oplus 3\ar@{^{(}->}[ru]&&1\oplus 2\ar@{^{(}->}[ru]}\]
Note that $[S(3)]=0$, since $S(3)$ is projective. The sequence of abelian groups is given by:

\begin{equation*}
L^{0}(M)=\langle [I(1)],[S(1)]\rangle \xrightarrow{L}\langle [S(2)],[S(1)]+[S(2)] \rangle \xrightarrow{L} \langle [S(1)]+[S(2)]\rangle \xrightarrow{L} \langle [S(1)]+[S(2)]\rangle \raw \cdots
\end{equation*}
Starting from the second syzygy, the rank of the abelian groups is one. So, $\phi(M)=2$. And $\psi (M)=3$, because $\projdim S(2)= 1$.
\end{ex}
The $\phi$-dimension and $\psi$-dimension of an algebra $\Lambda$ are given by $\phi\dim(\Lambda)= \sup \lbrace \phi(M) :  M\in \mod \Lambda \rbrace$ and $\psi\dim(\Lambda)= \sup \lbrace \psi(\Lambda) :  M\in \mod \Lambda \rbrace$. 
\\\\
Now consider $\Lambda$ a $d$-Gorenstein Artin algebra. As stated before, $\Omega_\Lambda$ is the inverse shift in the stable category $\underline{\CMP}(\Lambda)$, so it is a  bijective map on the non-projective $d$-th syzygies in $\mod \Lambda$. We deduce the following.

\begin{teo}\label{teoH} Let $\Lambda$ be a $d$-Gorenstein Artin algebra, then $\phi$dim$(\Lambda)=\psi$dim$(\Lambda )=d$.
\end{teo}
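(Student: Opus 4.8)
The plan is to show both inequalities: first that $\phi\dim(\Lambda) \le d$ and $\psi\dim(\Lambda) \le d$, and then that equality is attained by exhibiting a module with $\phi(M) = d$.

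For the upper bound, the key point is the one the authors already flagged in the paragraph preceding the statement: since $\Lambda$ is $d$-Gorenstein, a module $N$ is a $d$-th syzygy (up to projective summands) precisely when $N \in \CMP(\Lambda)$, and on $\CMP(\Lambda)$ the syzygy functor $\Omega_\Lambda$ is an autoequivalence of the stable category $\underline{\CMP}(\Lambda)$; in particular $\Omega_\Lambda$ acts bijectively, sending non-projective indecomposables to non-projective indecomposables. Now take any $M \in \mod\Lambda$ and write $\Omega^d M = \bigoplus_j N_j$ with $N_j$ indecomposable. Each $N_j$ is a $d$-th syzygy, hence lies in $\CMP(\Lambda)$. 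I claim $L \colon L^d(M) \to L^{d+1}(M)$ is already an isomorphism, and likewise $L \colon L^{d+s}(M) \to L^{d+s+1}(M)$ for all $s \ge 0$; this gives $\phi(M) \le d$. Indeed, discard the projective $N_j$ (they are $0$ in $K_0$), so $L^d(M) = \langle [N_j] : N_j \text{ non-projective} \rangle$; then $L$ maps the distinct classes $[N_j]$ to the classes $[\Omega N_j]$, and since $\Omega_\Lambda$ is a bijection on non-projective indecomposables of $\CMP(\Lambda)$ and is additive, $L$ restricted to $L^d(M)$ is injective with image $L^{d+1}(M)$. The one subtlety to check carefully: the $[N_j]$ need not be linearly independent in $K_0$, so I should argue directly that $L$ restricted to the subgroup $L^d(M)$ is injective — this follows because if $\sum a_j [N_j] \mapsto \sum a_j[\Omega N_j] = 0$, then by bijectivity of $\Omega$ on non-projective indecomposables the multiset relation pulls back, giving $\sum a_j [N_j] = 0$ already; more robustly, $\Omega$ induces an automorphism of the free abelian subgroup of $K_0$ spanned by non-projective indecomposables of $\CMP(\Lambda)$, and $L^d(M)$ sits inside that subgroup. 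That is the main technical obstacle, but it is mild. For $\psi$: once $\phi(M) = t \le d$, the quantity $\psi(M) = t + k$ where $k$ is the largest finite projective dimension among summands of $\Omega^t M$. Any summand of $\Omega^t M$ of finite projective dimension becomes, after $d - t$ further syzygies, a summand of $\Omega^d M$ that is both in $\CMP(\Lambda)$ and of finite projective dimension, hence projective; so its own projective dimension is at most $d - t$, giving $k \le d - t$ and $\psi(M) \le d$.

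For the lower bound, I need a module $M$ with $\phi(M) = d$ (this also forces $\psi(M) \ge \phi(M) = d$). Since $\injdim \Lambda = d$, there is an indecomposable summand $P$ of $D(\Lambda)$ — equivalently take $M = D(\Lambda)$ or a suitable injective — no, more directly: $\projdim D(\Lambda) = d$, so some indecomposable injective $I(i)$ has $\projdim I(i) = d$. For a module $M$ of finite projective dimension $d$, the Igusa–Todorov functions agree with the projective dimension, $\phi(M) = \psi(M) = \projdim M = d$; this is the standard fact recalled in the text ("for a module $M$ of finite projective dimension we have $\projdim M = \phi(M) = \psi(M)$"). Hence $\phi\dim(\Lambda) \ge \phi(I(i)) = d$ and similarly for $\psi$. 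Combining, $\phi\dim(\Lambda) = \psi\dim(\Lambda) = d$.

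The only place demanding genuine care is the upper-bound step, specifically verifying that $L$ becomes an isomorphism exactly at stage $d$ rather than merely eventually — i.e. that the rank of $L^s(M)$ stabilizes by $s = d$ — which rests on the fact that $\Omega_\Lambda$ is an honest autoequivalence of $\underline{\CMP}(\Lambda)$ (from \cite{Bu}, as recalled above) and therefore induces a bijection on iso-classes of non-projective indecomposable Cohen–Macaulay modules, making the associated $\mathbb{Z}$-linear map on their span invertible. Everything else is bookkeeping with the definitions of $\phi$, $\psi$, and $L^i$.
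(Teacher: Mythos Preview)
Your proposal is correct and follows essentially the same approach as the paper: show $\phi(M)\le d$ by using that $\Omega^d M\in\CMP(\Lambda)$ and that $\Omega$ is an autoequivalence of $\underline{\CMP}(\Lambda)$ (hence $L$ is injective on the subgroup of $K_0$ generated by non-projective indecomposable CM modules), bound $\psi(M)$ by the same syzygy-shifting argument you give, and attain the lower bound via $\phi(D\Lambda)=\psi(D\Lambda)=\projdim D\Lambda=d$. Your discussion of the injectivity of $L$ on $L^d(M)$ is in fact slightly more explicit than the paper's, but the argument is the same.
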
 
\begin{rema}
 This result has been obtained independently in \cite{Ma}. 
\end{rema}
\begin{proof} Let $\Lambda$ be as in the hypothesis and let $M$ be a $\Lambda$-module written as a direct sum of indecomposables $M=\oplus_{i=1}^r M_i$. Consider a projective resolution
\begin{equation*}
0 \raw \Omega^d M \raw P_{d-1} \raw \cdots \raw P_0 \raw M \raw 0.
\end{equation*}
It is known from homological algebra that $\Ext^{d+j}(M,\Lambda) \simeq \Ext^j(\Omega^d M, \Lambda)$ for $j > 0$. By hypothesis, $\injdim \Lambda =d$, then for all $j>0$ we have $\Ext^{d+j}(-, \Lambda)=0$. Therefore, $\Omega^d M$, and all its summands, belong to $\CMP (\Lambda)$. Since $\Omega$ acts as the inverse shift on the triangulated category $\underline{\CMP}(\Lambda)$, the morphism $L$ is injective over the subgroup $\Ss = \langle\ind\,\underline{\CMP}(\Lambda) \rangle$ of $K_0$. For $j \geqslant d$, $S_j =\langle [\Omega^j M_1],\ldots,[\Omega^j M_r]\rangle $ is a subgroup of $\Ss$ of finite rank. The morphism $L$ is such that $L(S_j) \cong S_{j+1}$, and, since it is injective over $S_j$ for all $j \geqslant d$, it preserves the rank from $d$ on. According to the definition, $\phi(M) \leqslant d$. Also, $\phi (D \Lambda)=\projdim D \Lambda =d$, then $\phi$dim$(\Lambda)=d$. Now, we analyze the $\psi$dim. Let $M$ be a $\Lambda$-module and let $\phi(M)=t \leqslant d$. By definition $\psi(M)=\phi(M)+ \projdim Z$, where $Z$ is a summand of $\Omega^t M$ with finite projective dimension. Then $N=\Omega^{d-t}Z$ is a summand of $\Omega^d M$. We have that $N$ belongs to $\CMP$.
 If $N$ is projective, then $\projdim Z \leqslant d-t$ and this implies $\psi(M) \leqslant d$. If $N$ is not projective, thus $N$ is nonzero in $\underline{\CMP}$, then $N$ has infinite projective dimension, which is impossible because we are assuming that $\projdim Z$ is finite. Therefore, $\psi(M) \leqslant d$ and, since $\psi(D \Lambda)= d$, this yields $\psi$dim$\Lambda=d$.
\end{proof} 

\begin{rema} In \cite{HL} it is proved that an Artin algebra is selfinjective if and only if the $\phi$ and $\psi$ dimension are zero. For $d >0$, the converse of Theorem \ref{teoH} is not true, as we see in the following example  due to Mata \cite{Ma}.
\end{rema}

\begin{ex} Let $Q$ be the following quiver, and $A=kQ/rad^2$.
\[\xymatrix{1\ar[r] & 2\ar[r] & 3\ar[d]\\
& 5\ar[u]& 4\ar[l]}\] 
This algebra is not 1-Gorenstein because $\projdim I(2)= \infty$. The only indecomposable non-projective modules are $I(2)$ and the simple modules $S(j)$ for $j \neq 1$. Among these modules, the indecomposable first syzygies are the simple modules and they have infinite projective dimension. The morphism $L$ permutes the modules $\{ S(j) : j \neq 1 \}$. Then, $\phi$dim$(A)= \psi$dim$(A)=1$. 
\end{ex}

\section{2-Calabi-Yau tilted algebras}\label{sect 3}
The main result of this section is the characterization of the modules in the categories $\underline{\CMP}(\B)$ and $\underline{\CMI}(\B)$ for a 2-Calabi-Yau tilted algebra $\B$ in terms of the functors $\tau$, $\tau^{-1}$, $\Omega$ and $\Omega^{-1}$. As an immediate corollary, we obtain that $\Omega(\mod \B)= \CMP(\B)$. First, let us recall definitions and results about 2-Calabi-Yau categories and 2-Calabi-Yau tilted algebras. Let $\Cc$ be a $k$-linear triangulated category. For $X,Y$ in $\Cc$ we have $\Ext^{n}(X,Y)=\Hom(X,Y[n])$. 

\begin{df} Let $k$ be an algebraically closed field. A $k$-linear triangulated category $\Cc$ with split idempotents, suspension functor $[1]$ and finite dimensional $\Hom$-spaces is called \emph{2-Calabi-Yau} if
\begin{equation*}
D\Ext_\Cc(X,Y) \simeq \Ext_\Cc(Y,X).
\end{equation*}
\end{df}
We will often write 2-CY instead of 2-Calabi-Yau. Examples of 2-CY categories are the cluster categories defined in \cite{BMRRT}, \cite{CCS} and \cite{Am}. For more information about 2-CY categories we refer to \cite{Am11}. 
\begin{df} Let $\Cc$ be a 2-CY category. An object $T \in \Cc$ is called \emph{cluster-tilting} if $T$ is basic and $\add T= \lbrace X \in \Cc : \Ext_\Cc (X,T)=0\rbrace$.
\end{df}

\begin{df} The endomorphism algebra of a cluster-tilting object $\End_\Cc(T)$ is called a \emph{2-Calabi-Yau tilted algebra}.
\end{df}
It was shown in \cite{BMR} that there is an equivalence of categories
\begin{equation}\label{bmr}
F: \Cc/\add T [1] \raw \mod \End_\Cc(T)
\end{equation}
given by $X \mapsto \Hom_\Cc(T,X)$. This equivalence maps $\add T$ to the projective $\End_\Cc(T)$-modules and $\add T[2]$ to the injective $\End_\Cc(T)$-modules. 
\\
It has been proved in \cite{KR} that every 2-CY tilted algebra is Gorenstein of dimension at most one. Consequently, the projective dimension of a module can only be 0,1 or infinity. And the same holds for the injective dimension.

\begin{lema}\label{lema1} Let $\B$ be a 2-CY tilted algebra and let $M\in \mod \B$. Let $P_1\raw P_0\xrightarrow{h} M\raw 0$ be a minimal projective presentation. Then, there is an exact sequence
\begin{equation*}
\tau^{-1}P_0 \xrightarrow{f} \tau^{-1}M \xrightarrow{g} P_1 \raw P_0 \xrightarrow{h} M \raw 0.
\end{equation*}
The induced epimorphism $\overline{g}:\tau^{-1}M \raw \Omega^{2}M$ is an isomorphism if and only if $f=0$.
\end{lema}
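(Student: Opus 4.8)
The plan is to build the exact sequence from the Auslander--Reiten formula and then analyze when the comparison map is an isomorphism. First I would apply the Auslander--Reiten (Auslander--Bridger) transpose to the minimal projective presentation. Recall that for a module $M$ with minimal projective presentation $P_1 \xrightarrow{d} P_0 \xrightarrow{h} M \to 0$, applying $\Hom_\B(-,\B)$ and taking the cokernel of $\Hom(d,\B)$ gives the transpose $\operatorname{Tr} M$, and $\tau M = D\operatorname{Tr} M$. Since $\B$ is $1$-Gorenstein, $\injdim \B \le 1$, so the dual statement $\projdim D\B \le 1$ lets one control the relevant $\Ext$ groups. Concretely, I would dualize the presentation of $\operatorname{Tr} M$ back, obtaining a four-term exact sequence of the form
\begin{equation*}
0 \raw \Ext^1_\B(M,\B)^{\vee}\text{-type term}\raw \tau^{-1} P_0 \xrightarrow{f} \tau^{-1} M \xrightarrow{g} P_1 \xrightarrow{d} P_0,
\end{equation*}
and splicing this with $P_1 \xrightarrow{d} P_0 \xrightarrow{h} M \to 0$ yields the asserted sequence. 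The key point making this work over a $2$-CY tilted algebra is that $\tau^{-1}$ is exact on the relevant part because the injectives have projective dimension at most one, so there are no correction terms beyond $\tau^{-1}P_0$; this is really where the Gorenstein hypothesis from \cite{KR} enters.

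Next I would identify $\Omega^2 M$ inside this sequence. From $P_1 \xrightarrow{d} P_0 \xrightarrow{h} M \to 0$ we have $\Omega M = \ker h = \operatorname{im} d$ and $\Omega^2 M = \ker(P_1 \twoheadrightarrow \Omega M) = \ker d$. So the sequence $\tau^{-1}M \xrightarrow{g} P_1 \xrightarrow{d} P_0$ shows that $g$ factors through $\ker d = \Omega^2 M$, giving the induced map $\overline g \colon \tau^{-1}M \raw \Omega^2 M$, and since $\Omega^2 M = \ker d = \operatorname{im} g$ (the sequence being exact at $P_1$), the map $\overline g$ is automatically surjective. Therefore $\overline g$ is an isomorphism precisely when it is injective, i.e. when $\ker \overline g = \ker g = 0$.

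Finally I would translate $\ker g = 0$ into the condition $f = 0$. Exactness at $\tau^{-1}M$ in the long sequence says $\ker g = \operatorname{im} f$, so $\ker g = 0$ if and only if $\operatorname{im} f = 0$, i.e. $f = 0$. That gives one direction immediately. For the converse, if $f = 0$ then $\operatorname{im} f = 0 = \ker g$, so $g$ is injective, hence $\overline g$ is injective and thus an isomorphism. I expect the main obstacle to be the first paragraph: carefully setting up the transpose computation and verifying that over a $1$-Gorenstein ($2$-CY tilted) algebra the dualized presentation of $\operatorname{Tr} M$ really produces exactly the four-term tail $\tau^{-1}P_0 \to \tau^{-1}M \to P_1 \to P_0$ with no extra homology — in particular checking that the leftmost term's image is all of $\ker(\tau^{-1}M \to P_1)$ and that minimality of the presentation is used correctly. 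Once that sequence is in hand, the identification of $\Omega^2 M$ and the equivalence with $f=0$ are formal diagram chases.
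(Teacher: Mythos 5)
The closing part of your argument (identifying $\overline{g}$, its automatic surjectivity, and the equivalence with $f=0$) coincides with the paper's final observation and is fine; the gap is in your first paragraph, exactly where you anticipated trouble, and it cannot be repaired within the framework you propose. Dualizing the presentation of $\operatorname{Tr}M$ does not produce the tail $\tau^{-1}P_0 \raw \tau^{-1}M \raw P_1 \raw P_0$: applying $\Hom(-,\B)$ to the exact sequence $P_0^{*}\raw P_1^{*}\raw \operatorname{Tr}M\raw 0$ gives $0\raw (\operatorname{Tr}M)^{*}\raw P_1 \xrightarrow{d} P_0$, i.e.\ it identifies $\ker d=\Omega^{2}M$ with $(\operatorname{Tr}M)^{*}$, which (since $\tau M=D\operatorname{Tr}M$) is the inverse Nakayama functor applied to $\tau M$ --- not $\tau^{-1}M$ --- and no map $\tau^{-1}M\raw P_1$ appears anywhere in this computation. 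Relating $\Omega^{2}M$ to $\tau^{-1}M$ rather than to $(\operatorname{Tr}M)^{*}$ is precisely the content of the lemma, so it cannot fall out of the transpose construction as a formal consequence.

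Moreover, your claim that the only input beyond general Artin algebra theory is the Gorenstein condition is false, so no argument of this shape can close the gap. Every selfinjective algebra is Gorenstein, and there projectives are injective, so $\tau^{-1}P_0=0$; the lemma would then force $f=0$ and hence $\tau^{-1}M\cong\Omega^{2}M$ for every non-projective $M$. But for the selfinjective Nakayama algebra $kQ/\rad^{3}$, with $Q$ the oriented cycle on three vertices, one computes from the minimal presentations that $\Omega^{2}S\cong S$ for every simple $S$, while $\tau^{-1}S$ is a different simple; so the asserted exact sequence simply does not exist over this Gorenstein algebra. The lemma genuinely needs the 2-Calabi-Yau category: the paper lifts $M$ to an object $X\in\Cc$ with $\Hom_\Cc(T,X)=M$, takes the Keller--Reiten $\add T$-approximation triangle $T_1\raw T_0\raw X\raw T_1[1]$ from \cite{KR}, applies the homological functor $\Hom_\Cc(T,-)$ to its rotation, and uses that $\Hom_\Cc(T,X[-1])\cong\tau^{-1}M$ and $\Hom_\Cc(T,T_0[-1])\cong\tau^{-1}P_0$ because the shift agrees with the AR translation in $\Cc$. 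That is where the maps $f$ and $g$ and the exactness come from; the Gorenstein property of $\B$ is a by-product of this setup, not a substitute for it.
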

\begin{proof} Consider an object $X$, without summands in $\add T [1]$, in $\Cc$ such that $\Hom_\Cc(T,X)=M$. It has been shown in \cite{KR} that there is a triangle in $\Cc$
\begin{equation*}
T_1 \raw T_0 \xrightarrow{\tilde{h}} X \raw T_1[1]
\end{equation*}
where $T_0,T_1\in\add T$, and $\tilde{h}$ is a minimal $\add T$-approximation. Applying $\Hom_\Cc(T, - )$, we obtain 
\begin{equation*}
\Hom_\Cc(T,T_0[-1]) \raw \Hom_\Cc(T,X[-1])\raw \Hom_\Cc(T,T_1)\raw \Hom_\Cc(T,T_0)\xrightarrow{h} \Hom_\Cc(T,X)\raw 0
\end{equation*}
which gives the following exact sequence in $\mod \B$
\begin{equation*}
\tau^{-1}P_0 \xrightarrow{f} \tau^{-1}M \xrightarrow{g} P_1 \raw P_0 \raw M \raw 0,
\end{equation*}
where $P_1 \raw P_0 \raw M \raw 0$ is a minimal projective presentation and every minimal projective presentation arises this way. By the exactness of the sequence, the induced epimorphism $\overline{g}:\tau^{-1}M \raw \Omega^{2} M$ is an isomorphism if and only if $f=0$. 
\end{proof}

We are now ready to state our first main result.  We denote by $\ind\,\underline{\CMP}(\B)$ the set of indecomposable objects in $\underline{\CMP}(\B)$. Abusing notation,  we will denote the class of a $B$-module $M$ in the category $\underline{\CMP}(\B)$ also by $M$.

\begin{teo}\label{teo} Let $\B$ be 2-CY tilted algebra. Let $M$ and $N$ be indecomposable modules in $\mod \B$. Then the statements \textup{(a1)-(a4)} (respectively \textup{(b1)-(b4)}) are equivalent
\begin{multicols}{2}
\begin{itemize}
\item[(a1)] $M$ is a non-projective syzygy,
\item[(a2)] $M\in\ind\,\underline{\CMP}(\B)$,
\item[(a3)] $\Omega^{2}\tau M \simeq M$,
\item[(a4)] $M$ is non-projective and $\Omega^{-2} M \simeq \tau M$.
\end{itemize}
\columnbreak
\begin{itemize}
\item[(b1)] $N$ is a non-injective co-syzygy,
\item[(b2)] $N\in\ind\,\underline{\CMI}(\B)$,
\item[(b3)] $\Omega^{-2}\tau^{-1} N \simeq N$,
\item[(b4)] $N$ is non-injective and $\Omega^{2} N \simeq \tau^{-1} N$.
\end{itemize}
\end{multicols}
\end{teo}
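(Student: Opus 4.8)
The plan is to prove the chain of equivalences for the $(a)$-statements; the $(b)$-statements then follow by the duality between $\CMP$ and $\CMI$ (using that $D$ sends a 2-CY tilted algebra $\B$ to $\B^{op}$, which is again 2-CY tilted, and interchanges syzygies with co-syzygies, $\Omega$ with $\Omega^{-1}$, and $\tau$ with $\tau^{-1}$). So I would only write out the $(a)$-part in detail and indicate that $(b)$ is obtained by applying $D$. I would organize the $(a)$-part as $(a1)\Rightarrow(a3)\Rightarrow(a4)\Rightarrow(a2)\Rightarrow(a1)$, or possibly prove $(a1)\Leftrightarrow(a2)$ and $(a3)\Leftrightarrow(a4)$ separately and then link the two pairs.

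First, $(a3)\Leftrightarrow(a4)$ is essentially formal: since $\B$ is $1$-Gorenstein, $\Omega$ restricts to an autoequivalence of $\underline{\CMP}(\B)$ with quasi-inverse $\Omega^{-1}$ (Buchweitz, as recalled in the text), and $\tau\colon \underline{\CMP}(\B)\to\underline{\CMI}(\B)$ is a triangle equivalence (Remark~\ref{rema11}). Applying $\Omega^{-2}$ to $\Omega^2\tau M\simeq M$ gives $\tau M\simeq \Omega^{-2}M$, once one knows $M$ (hence $\tau M$) is non-projective; and conversely. The real content is relating these identities to $\CMP$-membership. For $(a1)\Rightarrow(a2)$: if $M=\Omega N$ for some $N$ and $M$ is non-projective, then from $\Ext^i_\B(M,\B)\simeq \Ext^{i+1}_\B(N,\B)$ for $i\ge 1$ and the fact that $\injdim \B\le 1$ forces $\Ext^{j}_\B(-,\B)=0$ for $j\ge 2$, we get $\Ext^i_\B(M,\B)=0$ for all $i\ge 1$, i.e. $M\in\CMP(\B)$; indecomposability is inherited. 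The converse $(a2)\Rightarrow(a1)$ uses that in the Frobenius category $\CMP(\B)$ every object is a syzygy: $M=\Omega(\Omega^{-1}M)$ where $\Omega^{-1}M$ is computed via a short exact sequence $0\to M\to P\to \Omega^{-1}M\to 0$ with $P$ projective; one must check $\Omega^{-1}M$ lies in $\mod\B$ (it does) and $M$ is genuinely the kernel of a projective cover up to projective summands, so that $M$ is a non-projective syzygy in $\mod\B$.

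The bridge to $\tau$ is Lemma~\ref{lema1}, which I expect to be the crux. Given an indecomposable non-projective $M$, take a minimal projective presentation $P_1\to P_0\to M\to 0$; Lemma~\ref{lema1} supplies the exact sequence $\tau^{-1}P_0\xrightarrow{f}\tau^{-1}M\xrightarrow{g}P_1\to P_0\to M\to 0$ and says the induced epi $\bar g\colon\tau^{-1}M\to\Omega^2 M$ is iso iff $f=0$. Replacing $M$ by $\tau M$ (indecomposable, non-projective whenever $M$ is not injective), this reads: $\Omega^2\tau M\simeq M'$ where $M'=\Omega^2\tau M$ is the relevant syzygy, and $\Omega^2\tau M\simeq \tau(\tau M)$... — more precisely I would apply the lemma to $\tau M$ to get $\bar g\colon\tau^{-1}\tau M=M\twoheadrightarrow \Omega^2\tau M$, which is iso iff the corresponding $f=0$. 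So I must show: $M\in\CMP(\B)$ (equivalently $M$ a non-projective syzygy) $\iff$ this $f$ vanishes. The key computation is that $f=0$ is equivalent to $g$ being injective, which, chasing the sequence, is equivalent to $\tau M$ mapping isomorphically (in the stable category) onto a second syzygy — and this in turn is controlled by $\Ext^1_\B(M,\B)$ or by the approximation-triangle data from \cite{KR}. Concretely I would translate everything through the equivalence $F\colon\Cc/\add T[1]\to\mod\B$: lift $M$ to $X\in\Cc$, use the $\add T$-approximation triangle $T_1\to T_0\to X\to T_1[1]$, apply $\Hom_\Cc(T,-)$ and $\Hom_\Cc(T,-[{-1}])$, and use 2-Calabi-Yau duality $D\Ext_\Cc(X,Y)\simeq\Ext_\Cc(Y,X)$ to identify the map $f$ with (the dual of) a connecting map detecting whether $X$ has no $\add T[1]$-summands in a suitable shift, equivalently whether $M$ is Cohen-Macaulay.

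The main obstacle, then, is making the last identification precise: showing that "$f=0$ in Lemma~\ref{lema1} applied to $\tau M$" is literally equivalent to "$\Ext^i_\B(M,\B)=0$ for $i>0$". I anticipate this comes down to the following: $\Ext^1_\B(M,\B)\cong D\,\overline{\Hom}_\B(\B,\tau M)$-type formula (Auslander-Reiten duality) together with the $1$-Gorenstein vanishing, so that $M\in\CMP(\B)$ iff $\tau M$ has no projective-free obstruction to being a $(-2)$-cosyzygy, which is exactly $f=0$. Once this single equivalence is in place, the cycle $(a1)\Leftrightarrow(a2)\Leftrightarrow(a3)\Leftrightarrow(a4)$ closes, and, as an advertised corollary, $\Omega(\mod\B)=\CMP(\B)$ drops out since every object of $\CMP(\B)$ is a non-projective (or projective) syzygy and conversely every syzygy is Cohen-Macaulay. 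I would close by noting the corollary $\Omega(\mod\B)=\CMP(\B)$ follows immediately from $(a1)\Leftrightarrow(a2)$ together with the fact that projective modules are trivially in $\CMP(\B)$ and are syzygies of injectives (using $\injdim\B\le 1$).
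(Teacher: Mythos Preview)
Your core strategy matches the paper's: apply Lemma~\ref{lema1} to $\tau M$ and use Auslander--Reiten duality to prove (a2)$\Rightarrow$(a3), and use the 1-Gorenstein dimension shift for (a1)$\Rightarrow$(a2). The computation you are circling around is the paper's equation~(\ref{eq1}): since $\injdim P_0\le 1$, one has
\[
D\Ext^1_\B(M,P_0)\simeq \underline{\Hom}_\B(\tau^{-1}P_0,M)\simeq \Hom_\B(\tau^{-1}P_0,M),
\]
so the map $f$ in Lemma~\ref{lema1} (for the module $\tau M$) lives in a space that vanishes exactly when $M\in\CMP(\B)$. Your sketch has this idea but leaves it as a ``main obstacle''; once written down it is a one-line application of the AR formula, not something requiring the lift to $\Cc$ and 2-CY duality that you outline.

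There is, however, a genuine gap in your claim that (a3)$\Leftrightarrow$(a4) is ``essentially formal''. The operators $\Omega$ and $\Omega^{-1}$ in the statement are the syzygy and co-syzygy in $\mod\B$, and these are \emph{not} mutually inverse on arbitrary modules: $\Omega$ is invertible only on $\underline{\CMP}(\B)$ and $\Omega^{-1}$ only on $\underline{\CMI}(\B)$. From $\Omega^2\tau M\simeq M$ you cannot simply ``apply $\Omega^{-2}$'' to obtain $\tau M\simeq\Omega^{-2}M$, because $\tau M$ lies in $\CMI(\B)$, not in $\CMP(\B)$, and $\Omega^{-2}\Omega^2$ is not the identity there. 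The paper does not attempt (a3)$\Leftrightarrow$(a4) directly. It first closes the cycle (a1)$\Leftrightarrow$(a2)$\Leftrightarrow$(a3) and, by duality, (b1)$\Leftrightarrow$(b2)$\Leftrightarrow$(b3); then (a2)$\Rightarrow$(a4) follows by applying (b3) to $N=\tau M\in\ind\,\underline{\CMI}(\B)$ (via Remark~\ref{rema11}), and (a4)$\Rightarrow$(a2) follows because $\tau M\simeq\Omega^{-2}M$ exhibits $\tau M$ as a non-injective co-syzygy, hence $\tau M\in\ind\,\underline{\CMI}(\B)$ and so $M\in\ind\,\underline{\CMP}(\B)$. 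Your route through (a3)$\Leftrightarrow$(a4) only works once you already know enough of the (b)-side to control $\Omega^{-1}$ on $\tau M$, which is the paper's argument in disguise.
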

\begin{proof}
(a1)$\Rightarrow$(a2).

Let $M$ be an indecomposable non-projective syzygy. Then there is a short exact sequence $0\raw M \raw P_0 \raw N \raw 0$ in $\mod \B$. Applying $\Hom_\B(- ,\B)$ yields an exact sequence
\begin{equation*}
\cdots\raw \Ext^1_\B(P_0,\B)\raw \Ext^1_\B(M,\B) \raw \Ext_\B^{2}(N,\B)\raw \cdots .
\end{equation*}
The first term is zero because $P_0$ is projective and the last term is zero because $\injdim \B \leqslant 1$, by the Gorenstein condition. Then $\Ext^1_\B(M,\B)=0$, so $M$ is in $\CMP (\B)$, and, since $M$ is not projective, then $M\in\ind\,\underline{\CMP} (\B)$.

(a2)$\Rightarrow$(a3).
Let $M\in\ind\,\underline{\CMP}(\B)$. Since $M$ is not projective, $\tau M \neq 0$. Consider the exact sequence given in Lemma \ref{lema1}, but for the module $\tau M$
\begin{equation*}
\tau^{-1}P_0\xrightarrow{f}M \xrightarrow{g}P_1 \raw P_0 \raw \tau M \raw 0.
\end{equation*}
So the epimorphism $\overline{g}:M \raw \Omega^{2} \tau M$ is an isomorphism if and only if $f=0$. The morphism $f$ belongs to $\Hom_\B (\tau^{-1}P_0,M)$. From the Auslander-Reiten formulas, and the fact that $\injdim P_0\leqslant 1 $, we have
\begin{equation}\label{eq1}
 \D \Ext^1_\B(M,P_0) \simeq \underline{\Hom}_\B (\tau^{-1}P_0,M) \simeq  \Hom_\B(\tau^{-1}P_0,M).
\end{equation}
But since $M \in \underline{\CMP}(\B)$, we have $\Ext^1_\B(M,P_0)=0$, and thus $f$ must be zero and $M \simeq \Omega^{2}\tau M$.

(a3)$\Rightarrow$(a1).
If $M\simeq \Omega^{2}\tau M$, then $M$ cannot be projective because $\tau M \neq 0$, and $M$ is a syzygy.

(a2)$\Rightarrow$(a4).
First notice that the proof of the equivalence (b1)$\Leftrightarrow $(b2)$\Leftrightarrow $(b3) is similar to (a1)$\Leftrightarrow $(a2)$ \Leftrightarrow $(a3), using the dual of Lemma~\ref{lema1}.  
If $M\in\ind\,\underline{\CMP}(\B)$, then by Remark 2.3, $\tau M\in\ind\,\underline{\CMI}(\B)$. Using (b2)$\Leftrightarrow$(b3), we have $\Omega^{-2}M \simeq(\Omega^{-2}\tau^{-1})\tau M \simeq \tau M$.

(a4)$\Rightarrow$(a2).
If $\tau M \simeq \Omega^{-2}M$ and $M$ is not projective, then $\tau M$ is a co-syzygy and is not injective. Hence, $\tau M\in\ind\,\underline{\CMI}(\B)$ and, by Remark 2.3,  $M\in\ind\,\underline{\CMP}(\B)$.  The proof of  equivalence (b4) and the other statements of part (b) is similar. 
\end{proof}

The equivalence of (a1) and (a2) shows that $\CMP (\B)= \Omega(\mod \B)$,  and the set $\ind\,\underline{\CMP}(\B)$ of indecomposable objects in the stable category can be identified with the set of non-projective indecomposable syzygies in $\mod\,\B$. This holds for all 1-Gorenstein Artin algebras. It is known that, for a $d$-Gorenstein algebra, $M$ is projectively Cohen-Macaulay if and only if $M$ is a $d$-th syzygy, see \cite[Proposition 6.20]{Be00}. Each module over a $d$-Gorenstein algebra either has infinite projective dimension or has projective dimension at most $d$. In this context, as explained in \cite{Be00}, the modules called projectively Cohen-Macaulay in these notes are the modules of Gorenstein dimension zero in \cite[Proposition 3.8]{ABr}, and Gorenstein-projective modules in \cite[Chapter 10]{EJ}.

Denote by $\Nn=\D\Hom_\Lambda(-,\Lambda)\colon\mod \Lambda\to\mod\Lambda$ the Nakayama functor. If $\Lambda$ is a selfinjective $k$-algebra then the functors $\Omega^{-2} \tau$ and $\Nn$ induce naturally isomorphic functors  $\underline{\mod}\Lambda\to \underline{\mod}\Lambda$ on the stable category. We say that the Nakayama functor has \emph{finite order} if there exists $m>0$ such that the functor $\Nn^m$ is naturally isomorphic to the identity.
  
We get the following:
\begin{coro}\label{cor 3.6} If a 2-CY tilted algebra $\B$ is selfinjective and the Nakayama functor $\mathcal{N}_\B$ has finite order $m$, then $\tau^{2m} M \simeq M$ for all $M$ in $\underline{\mod} \B$.  In particular, if $\B$ is symmetric then $\tau^2 M\cong M$.
\end{coro}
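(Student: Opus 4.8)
The plan is to combine Theorem~\ref{teo} with the remark preceding the corollary, which identifies the Nakayama functor $\Nn_\B$ with $\Omega^{-2}\tau$ on the stable module category of a selfinjective algebra. First I would observe that since $\B$ is selfinjective, it is $0$-Gorenstein, so every non-projective indecomposable module $M$ lies in $\CMP(\B)=\CMI(\B)$ and hence in $\ind\,\underline{\CMP}(\B)$; indeed for a selfinjective algebra $\underline{\mod}\,\B=\underline{\CMP}(\B)$. Thus Theorem~\ref{teo}(a4) applies to every non-projective indecomposable $M$, giving $\tau M\simeq\Omega^{-2}M$ in the stable category, equivalently $\Omega^{-2}\tau M\simeq M$. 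In functorial terms this says $\Omega^{-2}\tau\simeq\mathrm{id}$ on $\underline{\mod}\,\B$, up to natural isomorphism.

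Next I would invoke the remark that $\Omega^{-2}\tau$ and $\Nn_\B$ are naturally isomorphic as endofunctors of $\underline{\mod}\,\B$. Combining this with the previous paragraph, $\Nn_\B\simeq\mathrm{id}$ on $\underline{\mod}\,\B$. But then the hypothesis that $\Nn_\B$ has finite order $m$ is, on the stable category, automatically giving us $\Nn_\B^m\simeq\mathrm{id}$; what we actually want is a statement about $\tau$. From $\Nn_\B\simeq\Omega^{-2}\tau$ and $\Nn_\B^m\simeq\mathrm{id}$ we get $(\Omega^{-2}\tau)^m\simeq\mathrm{id}$. The functors $\Omega$ (equivalently $\Omega^{-1}$) and $\tau$ commute up to natural isomorphism on $\underline{\mod}\,\B$ for a selfinjective algebra, since both are autoequivalences and $\tau$ is itself expressible via $\Omega$ and the Nakayama functor ($\tau\simeq\Nn\Omega^2$ on the stable category, the standard formula). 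Hence $(\Omega^{-2}\tau)^m\simeq\Omega^{-2m}\tau^m$, so $\Omega^{-2m}\tau^m\simeq\mathrm{id}$, i.e. $\tau^m\simeq\Omega^{2m}$.

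Now I would feed this back through $\Omega^{-2}\tau\simeq\mathrm{id}$, which rearranges to $\Omega^{2}\simeq\tau$ on $\underline{\mod}\,\B$ (this is exactly the content of (a4) read functorially, using that every module is now in the Cohen-Macaulay category). Substituting $\Omega^{2}\simeq\tau$ into $\tau^m\simeq\Omega^{2m}=(\Omega^2)^m\simeq\tau^m$ is a tautology, so instead I apply $\Omega^2\simeq\tau$ to $\tau^m\simeq\Omega^{2m}$ directly: $\Omega^{2m}\simeq(\Omega^2)^m\simeq\tau^m$, and on the other hand iterating $\Omega^2\simeq\tau$ gives $\Omega^{2m}\simeq\tau\Omega^{2(m-1)}\simeq\tau^2\Omega^{2(m-2)}\simeq\cdots\simeq\tau^m$. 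The cleaner route: from $\tau\simeq\Omega^2$ and $\Nn_\B^m\simeq\mathrm{id}$ with $\Nn_\B\simeq\Omega^{-2}\tau$, note $\Nn_\B\simeq\Omega^{-2}\tau\simeq\Omega^{-2}\Omega^2\simeq\mathrm{id}$ would trivialize things, so the genuinely used input must be that $\tau\simeq\Omega^{-2}$ is false in general — rather $\tau\simeq\Omega^2$ need not hold unless $M$ is a syzygy. The correct bookkeeping: Theorem~\ref{teo}(a4) gives $\Omega^{-2}M\simeq\tau M$, so $\tau\simeq\Omega^{-2}$; then $\tau^{2m}\simeq\Omega^{-4m}$; and from $\Nn_\B\simeq\Omega^{-2}\tau\simeq\Omega^{-2}\Omega^{-2}=\Omega^{-4}$, finite order $m$ gives $\Omega^{-4m}\simeq\Nn_\B^m\simeq\mathrm{id}$, hence $\tau^{2m}M\simeq M$.

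For the symmetric case, I would recall that $\B$ symmetric means $\Nn_\B\simeq\mathrm{id}$ already on $\mod\,\B$, so we may take $m=1$, yielding $\tau^2 M\simeq M$ directly from the above with $m=1$ (or immediately from $\tau\simeq\Omega^{-2}$ and $\Omega^{-4}\simeq\Nn_\B\simeq\mathrm{id}$). The main obstacle in writing this up carefully is the bookkeeping of natural isomorphisms versus object-wise isomorphisms and making sure the identity $\Nn_\B\simeq\Omega^{-2}\tau$ (stated in the excerpt for selfinjective algebras) is applied in the right direction; once that functorial dictionary is pinned down, the chain $\tau\simeq\Omega^{-2}$, $\Nn_\B\simeq\Omega^{-4}$, $\Nn_\B^m\simeq\mathrm{id}$ closes the argument with no computation.
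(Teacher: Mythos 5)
Your final ``correct bookkeeping'' paragraph is exactly the paper's proof: selfinjectivity gives $\underline{\mod}\,\B=\underline{\CMP}(\B)$, Theorem~\ref{teo}(a4) gives $\tau\simeq\Omega^{-2}$ objectwise on the stable category, and combining $\Nn_\B\simeq\Omega^{-2}\tau\simeq\Omega^{-4}$ with $\Nn_\B^{m}\simeq\mathrm{id}$ yields $\tau^{2m}M\simeq M$, with $m=1$ in the symmetric case. The only needed repair is to delete the earlier exploratory claims ``$\Omega^{-2}\tau M\simeq M$'', hence ``$\Nn_\B\simeq\mathrm{id}$ on $\underline{\mod}\,\B$'' (these are false -- (a4) gives $\tau M\simeq\Omega^{-2}M$, not $\tau M\simeq\Omega^{2}M$, so $\Omega^{-2}\tau\simeq\Omega^{-4}$, not the identity), which you yourself correctly retract before the final argument.
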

\begin{proof}
Since $\B$ is selfinjective, we have $\underline{\CMP}(\B)= \underline{\mod}(\B)$. Therefore Theorem \ref{teo} (a4) implies that $\Omega^{-2}_\B M=\tau_\B M$ for all $M\in \underline{\mod}\B$. 
Now if $\mathcal{N}$ has order $m$, we get $M=\mathcal{N}^{m}M=(\Omega^{-2} \tau)^{m} M = \tau^{2m} M $.  The last statement of the corollary follows because for symmetric algebras the Nakayama functor is the identity.
\end{proof}

\begin{rema}  In the symmetric case Corollary \ref{cor 3.6} was already obtained in \cite{VD} and \cite{Lad}. Examples of symmetric 2-CY tilted algebras are  the Jacobian algebras arising from triangulations of closed surfaces.
\end{rema}

We have seen that for 2-CY tilted algebras, the indecomposable syzygies are the same as the indecomposable $\CMP$-modules. In \cite{R}, Artin algebras with a finite number of syzygies are called torsionless-finite. In this work, the author proves that for torsionless-finite Artin algebras the representation dimension is at most three. On the other hand, in \cite{BO} the authors prove that when $\B$ is a tame cluster-tilted algebra, $\CMP (\B)$ has finitely many indecomposable modules.
We thus obtain the following result.

\begin{coro} If $\B$ is a $\CMP$ finite 1-Gorenstein algebra, then its representation dimension is at most three. In particular, if $\B$ is a tame cluster-tilted algebra, then its representation dimension is at most three. 
\end{coro}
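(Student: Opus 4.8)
The statement to prove is: if $\B$ is a $\CMP$-finite $1$-Gorenstein algebra, then its representation dimension is at most three, and in particular this applies to tame cluster-tilted algebras. The plan is to reduce the claim to a combination of two already-cited external results, using the structural fact established in Theorem~\ref{teo}. The key observation, recorded in the paragraph following Theorem~\ref{teo}, is that for a $1$-Gorenstein Artin algebra $\B$ one has $\CMP(\B)=\Omega(\mod\B)$, i.e.\ every projectively Cohen–Macaulay module is a syzygy and conversely every non-projective syzygy is in $\CMP(\B)$. Hence the hypothesis that $\B$ is $\CMP$-finite (only finitely many indecomposable $\CMP$-modules up to isomorphism) is equivalent to saying that $\B$ has only finitely many indecomposable syzygy modules up to isomorphism, which is precisely Ringel's notion of a \emph{torsionless-finite} algebra from \cite{R}.

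From there, the first step is to invoke the main theorem of \cite{R}: any torsionless-finite Artin algebra has representation dimension at most three. Combined with the identification above, this immediately yields the general statement: a $\CMP$-finite $1$-Gorenstein algebra is torsionless-finite, hence $\textup{rep.dim}\,\B\le 3$.

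For the second, more concrete assertion, the step is to verify that a tame cluster-tilted algebra $\B$ satisfies the hypotheses of the first part. Cluster-tilted algebras are $1$-Gorenstein since they are a special case of $2$-CY tilted algebras, for which \cite{KR} gives Gorenstein dimension at most one; the Gorenstein dimension is exactly $1$ unless $\B$ is hereditary, but even in the $0$-Gorenstein (selfinjective) case the conclusion holds trivially or via the same argument, so no case distinction is needed. The $\CMP$-finiteness in the tame case is exactly the content of \cite{BO}, where it is shown that $\CMP(\B)$ has finitely many indecomposables when $\B$ is a tame cluster-tilted algebra. Feeding this into the general statement gives $\textup{rep.dim}\,\B\le 3$.

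The argument is essentially a bookkeeping assembly of \cite{R}, \cite{BO}, \cite{KR}, and the equivalence (a1)$\Leftrightarrow$(a2) of Theorem~\ref{teo}, so there is no genuinely hard analytic or homological step internal to this paper. The only point that requires a little care is making the translation between the three vocabularies — "projectively Cohen–Macaulay module'', "non-projective syzygy'', and "torsionless module'' — precise enough that the hypotheses of Ringel's theorem are literally met; this is where one must use that $\B$ is $1$-Gorenstein (so that $\Omega(\mod\B)=\CMP(\B)$ and syzygies of syzygies stay in the same finite class), rather than merely $2$-CY tilted. That is the place to be careful, but it is not a serious obstacle.
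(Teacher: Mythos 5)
Your proposal is correct and follows essentially the same route as the paper: identify $\CMP(\B)$ with the syzygy (torsionless) modules via the equivalence (a1)$\Leftrightarrow$(a2) of Theorem~\ref{teo} (valid for any 1-Gorenstein algebra), apply Ringel's theorem from \cite{R} that torsionless-finite algebras have representation dimension at most three, and use \cite{BO} together with the Gorenstein property from \cite{KR} for the tame cluster-tilted case. No gaps to report.
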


\begin{rema} The representation dimension of cluster-concealed algebras, a class of cluster-tilted algebra which overlaps with the tame ones, was studied in \cite{GT}. The authors compute the representation dimension by constructing an explicit Auslander generator. 
\end{rema}

\section{Geometric description of $\CMP$ modules: The unpunctured case}\label{sect 4}

An important class of 2-CY tilted algebras are those arising from surface triangulations. In this case, the 2-CY category is the generalized cluster category introduced in \cite{Am}. It is described in terms of arcs and elementary moves in \cite{CCS} for Dynkin type $\mathbb{A}$, in \cite{S} for Dynkin type $\mathbb{D}$, in \cite{BZ} for surfaces without punctures and in \cite{QZ} for surfaces with punctures. Starting with a marked surface and a triangulation, one gets the bound quiver of the 2-CY tilted algebra as adjacency quiver of the triangulation. The dimension vector of representations are given by the intersection number of curves with the arcs that form the triangulation, and the AR translation is given by elementary moves and change of tags in the surface. Motivated by these ideas, our goal is to find a geometric description of $\underline{\CMP}(\B)$ and of the $\Omega $-action in terms of curves. 

In \cite{Ka}, the author computes $\CMP(A) $ where $A=kQ/I$ is a gentle algebra. The indecomposable modules in $\underline{\CMP}(A)$ are given by the non-projective indecomposable summands of $\rad P(a)$, where $a$ is a vertex in a cycle $\alpha_1 \ldots \alpha_n $ such that $\alpha_t \alpha_{t+1}\in I$ for all $t$. In the following example, we refer to the 2-CY tilted algebras from unpunctured surfaces introduced in \cite{ABCP}. These algebras are gentle, so the $\CMP$ and $\CMI$ modules can be described easily.

\begin{ex}\label{ex1} Geometric description of $\underline{\CMP}(\B)$ for a 2-CY tilted algebra arising from a triangulated surface without punctures. 
\\
 We have $M\in\ind\,\underline{\CMP}(\B)$  if and only if $M$ is a summand of $\rad P(j)$, where $j$ is a vertex lying in a 3-cycle $i \xrightarrow{\alpha_i} j \xrightarrow{\alpha_j} k \xrightarrow{\alpha_k} i$. 
Such 3-cycles are in bijection with internal triangles in the surface triangulation. We write the projective modules as string modules 

\begin{align*}
  P(i)&: \beta_i^{-1} \leftarrow  i  \ \raw \  j  \ \raw  \ \beta_j\\
  P(j)&: \beta_j^{-1} \leftarrow  j  \ \raw \  k  \ \raw  \ \beta_k\\
  P(k)&: \beta_k^{-1} \leftarrow  k  \ \raw \  i  \ \raw  \ \beta_i
\end{align*}
where $\beta_i,\beta_j$ and $\beta_k$ are paths in the quiver. We also use the notation $M(i \raw \beta_i)$ for the module associated to the string $i\ \raw \beta_i$. An internal triangle formed by arcs $i,j,k$ in the surface induces a periodic projective resolution:

\[\xymatrix@C8pt@R8pt{ \dots \ar[rr]&&
 P(i)\ar[rr]\ar@{->>}[rd]&& P(k) \ar[rr]\ar@{->>}[rd] && P(j)
\ar[rr]&& M(j \raw \beta_j) \ar[rr]&&0\\
&M(j \raw \beta_j) \ar@{^{(}->}[ru] && M(i \raw \beta_i) \ar@{^{(}->}[ru]&&M(k \raw \beta_k) \ar@{^{(}->}[ru]}\]
All possible indecomposable $\underline{\CMP}$ modules appear in that way. For a triangulated unpunctured surface, each internal triangle gives a periodic projective resolution with period 3 and uniserial modules as syzygies. The dimension vector for these modules is given by the crossing number in the red arcs in Figure \ref{exgentle}.  
\begin{figure}
\centering
\def\svgwidth{5.7in}
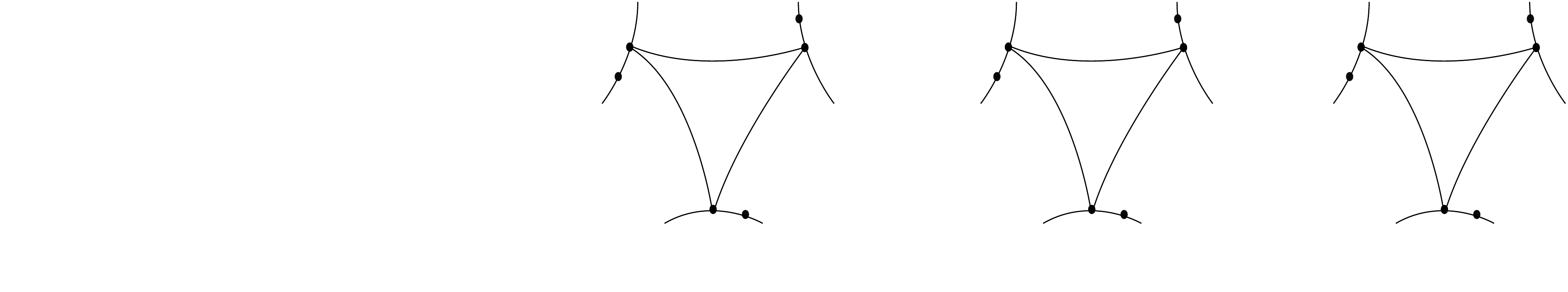
\caption{Arcs that correspond to indecomposable $\underline{\CMP}$ modules in a gentle algebra arising from a surface.}
\label{exgentle}
\end{figure}
The module $M(j \raw \beta_j)$ is associated to the arc $\gamma$ obtained from the arc $i$ by performing an elementary move counter-clockwise on one of the endpoints of $i$, in such way that the arc $\gamma$ crosses the arc $j$. By Remark \ref{rema11}, the modules in $\underline{\CMI}$ are associated to the AR translated of the arcs in Figure \ref{exgentle}, and can be obtained performing an elementary move in $\tau$ (clockwise) direction at the endpoints. 
\end{ex}
\begin{rema}
 An alternative way to prove this description of $\underline{\textup{CMP}}( \B)$ would be to use the formula of \cite{CSchr} for $\textup{Ext}^1$ over Jacobian algebras of unpunctured surfaces in terms of different types of crossings in the surface.  
\end{rema}
\section{Geometric description of CMP modules: The punctured disc} \label{sect 5}
In this subsection we describe $\underline{\CMP}(\B)$ for the disk with one puncture, or equivalently, for cluster-tilted algebras of Dynkin type $\mathbb{D}$. This gives an answer to \cite[ Remark 3.7]{Ka} in this case.
This situation is much more complicated than the unpunctured case, and we will need to distinguish between 3 types of triangulations.

The geometric definition of 
the cluster category of type $\mathbb{D}$ was given
in \cite{S}. The reader can also see  \cite[Section 3.3]{S14}. Since our surface now has a puncture, we have to work with tagged arcs. 
If $\alpha $ is an arc that is incident to the puncture, then it gives rise to two tagged arcs, one is tagged \emph{plain} and will still be denoted by $\alpha $ and the other is tagged \emph{notched} and will be denoted by $\alpha^{\Join}$. The underlying curve for both $\alpha$ and $\alpha^{\Join}$ is the same.
All possible quivers for a cluster-tilted algebra type $\mathbb{D}_n$ arise from triangulations of a punctured disc with $n$ marked points on the boundary. There is a correspondence between  tagged arcs and indecomposable objects in the cluster category, and this leads to a correspondence between  tagged arcs that are not in the triangulation and indecomposable $\B$-modules. If $\gamma$ is an arc, we denote by $X_\gamma$ the corresponding object in the cluster category. Let $M_\gamma$ be the indecomposable module corresponding to $\gamma$ if $\gamma$ is not in the triangulation, and let $M_\gamma =0$ otherwise. The AR translation in the cluster category is realized as follows, $\tau X_\gamma = X_\delta$ where $\delta$ is obtained from $\gamma$ by moving the endpoints to their clockwise neighbors along the boundary, and changing tag if $\gamma$ is incident to the puncture. The AR translation in $\mod \B$ is inherited from the AR translation in the cluster category. Let $a$ be an arc in the triangulation, the module associated to the arc $\tau^{-1}(a)$ is the indecomposable projective $P(a)$, and the module associated to the arc $\tau(a)$ is the indecomposable injective $I(a)$.
\\

We say that two arcs incident to the puncture are \emph{consecutive} if one of them is incident to a marked point $q$ and the other is incident to $\tau (q)$. In the following definition, we introduce 3 types of triangulations which are illustrated in Figure \ref{2 y 3}.

\begin{df}\label{types} We say that a triangulation on the punctured disc is of
\begin{enumerate}
\item \emph{type I} if there are three or more arcs, or two non-consecutive plain  arcs, incident to the puncture,
\item \emph{type II} if there are precisely two consecutive arcs incident to the puncture, or if there are two isotopic arcs incident to the puncture, one plain and one notched, and a third arc $\overline{a}$ as in Figure \ref{2 y 3}. 
\item \emph{type III} if there are two arcs incident to the puncture and the same marked point, and two arcs $\overline{c}$ and $\overline{d}$ as in Figure \ref{2 y 3}. 
\end{enumerate}
\end{df}

\begin{figure}
\centering
\def\svgwidth{5in}
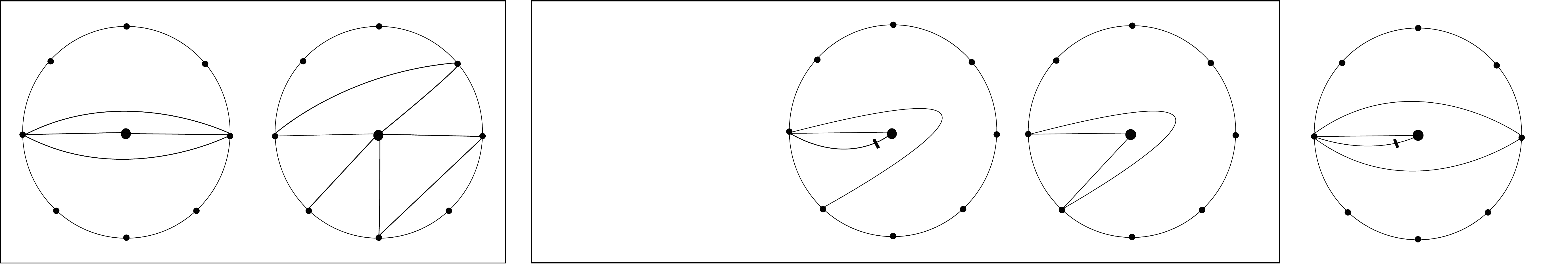
\caption{Partial triangulations of type I (framed left), of type II (framed center) and of type III (right).}
\label{2 y 3}
\end{figure}

All possible quivers for a cluster tilted algebra of type $\mathbb{D}$ arise from a triangulation of type I, II or III. Note however, that not all triangulations are of one of these types. In the following lemmas, we give a necessary condition for an object $X_\gamma$ to be associated to a module  $M_\gamma \in {\CMP}(\B)$. Our principal tool is the following consequence of the AR formulas.

\begin{rema}
 \label{rema util}
 Let $M$ be a $B$-module.
 \begin{itemize}
\item [\textup{(a)}] $M
\notin
 \CMP(\B)$ if and only if $\Hom_\B (\tau^{-1}P(t),M)
 \ne 0$ for some $t\in Q_0$. In particular, $\tau^{-1} P(t)\notin \CMP(\B).$
\item [\textup{(b)}] If $\projdim M =1$ then $  M\notin\CMP(\B)$.
\end{itemize}
\end{rema}
\begin{proof} Part
 (a) follows from the definition of $\CMP(\B)$ and equation (\ref{eq1}) in section \ref{sect 3}. Part (b) holds because  if $\projdim M = 1$, then $\Ext^1_\B(M,\B)\cong D\Hom_\B(\B,\tau M)\cong \tau M\ne 0$ since $M$ is not projective.
\end{proof}

Because of the equivalence (\ref{bmr}) of section \ref{sect 3}, to prove that $M_\gamma$ is not in $\CMP(\B)$ is enough to show that $\Hom_{\Cc}(\tau^{-1}T_t , X_\gamma)\neq 0$ and there is at least one non-zero morphism in this Hom-space that does not factor through $\add \tau T$. In the figures, we will often symbolize the object $X_a$, associated to an arc $a$ in the triangulation, by a circled $a$. For an arc $a$ in the triangulation, the associated object $X_a$ in the cluster category is equal to $\tau T_a = T_a [1]$, where $T_a$ is an indecomposable summand of the cluster tilting object $T$. Also, to say that the arc $\delta$ crosses the arc $a\in\Tt$ is equivalent to saying that $\Ext_\Cc(X_a, X_\delta) \neq 0$. Then, $\Hom_\Cc(T_a, X_\delta) \simeq \Hom_\Cc( X_a [-1], X_\delta) \simeq \Hom_\Cc (X_a, X_\delta [1]) \neq 0$. Thus, the objects $X_\delta$ such that the arc $\delta$ crosses the arc $a$ are precisely those in the support of $\Hom_\Cc (T_a, -)$.

 \subsection { Triangulations of type II and III and the set $\underline{\CMP}_\clubsuit$}  We first study these two types because they are easier. Type I is studied in the next subsection. 
 
 \begin{lema}\label{lema2y3} \begin{itemize}
\item [\textup{(a)}] Let $\mathcal{T}$ be a triangulation of type II and let $\overline{a}\in\mathcal{T}$ be as in Figure \ref{2 y 3}. Let $\gamma$ be an arc that crosses $\overline{a}$.
If $M_\gamma\in \ind\,\underline{\CMP}(\B)$ then, in the AR quiver, $M_\gamma$ lies at one of the positions marked with a $\diamond$ in Figure \ref{figura2}.
\item [\textup{(b)}] Let $\mathcal{T}$ be a triangulation of type III and let $\overline{c},\overline{d}\in\mathcal{T}$ be as in Figure \ref{2 y 3}. Let $\gamma$ be an arc that crosses $\overline{c}$ or $\overline{d}.$
If $M_\gamma\in \ind\,\underline{\CMP}(\B)$ then, in the AR quiver, $M_\gamma$ lies at one of the positions marked with a $\clubsuit$ or  $\ast$ in Figure \ref{figura3}.
\end{itemize}
\end{lema}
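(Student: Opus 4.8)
The plan is to exploit Remark~\ref{rema util}(a): a module $M_\gamma$ fails to be in $\CMP(\B)$ as soon as we can exhibit a vertex $t\in Q_0$ and a nonzero morphism $\tau^{-1}P(t)\to M_\gamma$ that does not factor through $\add\tau T$; equivalently, using the equivalence (\ref{bmr}) together with the dictionary recalled just before the statement, as soon as the arc $\gamma$ crosses the arc $\tau^{-1}(a)$ for some arc $a$ in the triangulation $\Tt$ in a way that produces a non-factoring morphism. So for part (a) I would first identify, from Figure~\ref{2 y 3}, the relevant arcs of $\Tt$ near the puncture — in the type~II case these are $\overline a$ together with the two consecutive arcs (or the plain/notched pair) incident to the puncture — and compute their inverse AR translates $\tau^{-1}(\cdot)$ geometrically, by moving endpoints to counter-clockwise neighbours along the boundary and switching tags at the puncture, as described in the paragraph preceding the lemma. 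The projectives $P(t)$ are exactly the modules attached to the arcs $\tau^{-1}(a)$, $a\in\Tt$, so the modules $\tau^{-1}P(t)$ correspond to the arcs $\tau^{-2}(a)$.

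Next, for each such arc $a$, I would describe the support of $\Hom_\Cc(T_a,-)$, i.e.\ the set of arcs $\delta$ crossing $a$, and then pass to $\tau^{-1}P(t)=\tau^{-1}(F X_{\tau^{-1}a})$ and determine which arcs $\gamma$ give $\Hom_\B(\tau^{-1}P(t),M_\gamma)\ne 0$ with the morphism not killed in the stable category. Geometrically this is a crossing condition between $\gamma$ and the arc underlying $\tau^{-1}P(t)$, refined by a factorization check through $\add\tau T$. Running this over all the finitely many arcs of $\Tt$ adjacent to the puncture in a type~II configuration eliminates every position of the AR quiver except those marked $\diamond$ in Figure~\ref{figura2}; the region marked $\diamond$ is precisely what survives every such obstruction. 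Part~(b) is the same argument applied to the type~III picture, using the two arcs $\overline c,\overline d$ and the pair of arcs incident to the puncture at a common marked point; here two families survive the elimination, giving the positions marked $\clubsuit$ and $\ast$ in Figure~\ref{figura3}. One should also invoke Remark~\ref{rema util}(b) to discard at one stroke all $M_\gamma$ with $\projdim M_\gamma=1$, which removes a further band of positions cheaply.

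The main obstacle I anticipate is bookkeeping rather than conceptual: one must carry out the geometric computation of $\tau^{-1}$ and $\tau^{-2}$ of several arcs incident to a puncture while keeping the plain/notched tags straight, and then, for each resulting arc, verify the crossing-and-non-factoring condition against an arbitrary $\gamma$ crossing $\overline a$ (resp.\ $\overline c$ or $\overline d$). The factorization-through-$\add\tau T$ check is the delicate point, since a crossing $\Hom_\Cc(\tau^{-1}T_t,X_\gamma)\ne 0$ only forces $M_\gamma\notin\CMP(\B)$ when some such morphism avoids $\add\tau T$; this must be read off from the triangulated structure of $\Cc$, typically by locating $X_\gamma$ relative to $\tau^{-1}T_t$ and $T_t$ in the AR quiver of $\Cc$. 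I expect the cleanest route is to translate everything into the combinatorial model of \cite{S}: crossings of tagged arcs in the punctured disc encode $\Ext^1_\Cc$, elementary moves encode $\tau$, and the factorization condition becomes a visible incidence condition among arcs, after which the lemma reduces to checking that no arc $\gamma$ outside the $\diamond$ (resp.\ $\clubsuit/\ast$) locus can simultaneously cross $\overline a$ (resp.\ $\overline c,\overline d$) and avoid crossing every $\tau^{-2}(a)$.
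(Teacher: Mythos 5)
Your plan is essentially the paper's argument: use Remark~\ref{rema util}(a) through the equivalence (\ref{bmr}), i.e.\ exhibit for each forbidden position a nonzero morphism $\tau^{-1}T_t\to X_\gamma$ in $\Cc$ that cannot factor through $\add\tau T$, read the crossing data as supports of $\Hom_\Cc(T_t,-)$, and carry out the elimination region by region in the AR quiver of $\Cc$, checking that the relevant regions contain no summands of $\tau T$; the leftover objects are then projectives or of the form $\tau^{-1}P(t)$ and are discarded directly. So the route is the same; but two concrete points in your write-up would derail the computation if followed literally. First, your dictionary is stated inconsistently: since $\Hom_\Cc(\tau^{-1}T_t,X_\gamma)=\Ext^1_\Cc(X_{\tau^{-1}(t)},X_\gamma)$, the obstruction is that $\gamma$ crosses the arc $\tau^{-1}(t)$ (the arc whose module is $P(t)$), as you say at the start — not the arc $\tau^{-2}(t)$ underlying the module $\tau^{-1}P(t)$, which is the criterion your closing sentence uses (``avoid crossing every $\tau^{-2}(a)$''). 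Running the elimination against crossings with $\tau^{-2}(a)$ produces the wrong obstruction locus.

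Second, the non-factorization check is not always settled by ``the region contains no summands of $\tau T$''. In the type III configuration there are positions (labelled $2$ in Figure~\ref{figura3}) where $\dim\Hom_\Cc(\tau^{-1}T_{\overline{c}},X)=2$ and part of this space does factor through $\tau T$; the paper eliminates these by a dimension count (the factoring subspace has dimension at most one, so a non-factoring morphism survives). Your generic ``crossing-and-non-factoring'' check, as described, would not by itself justify excluding these positions, and this is precisely the delicate step you flagged but did not resolve. With the crossing dictionary corrected and this dimension argument added (plus the routine disposal of $T_{\overline a}$, $T_{\overline c}$, $T_a$, $T_b$, $\tau^{-1}T_b$, etc.\ as projective or $\tau^{-1}P(t)$ cases), your plan fills in to the paper's proof.
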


\begin{proof}  
(a) We prove the lemma for the right case in Figure 2. A similar analysis can be done for the other cases, resulting in the same relative positions for $M_\gamma$. The objects $X_\delta$ such that $\delta$ is in the support of $\Hom_\Cc(T_{\overline{a}},-)$ are: $\tau^{-1}T_{b}$, $T_{\overline{a}}$, $T_{\overline{c}}$, those in $\diamond$ position and those in the area $A$ in Figure 3. The objects in $A$ are in the support of $\Hom_\Cc(T_{\overline{a}},-)$ and of $\Hom_\Cc(\tau^{-1}T_{\overline{a}}, -)$. Then, for $X_\delta$ in $A$ there exist a non-zero morphism $f\in \Hom_\Cc(\tau^{-1}T_{\overline{a}},X_\delta)$ and, since $A$ does not contain summands of $\tau T$, $f$ does not factor through $\tau T$. This implies that $M_\delta$ is not in $\CMP(\B)$. The objects $T_{\overline{a}}$ and $T_{\overline{c}}$ are associated to  the projectives $P(\overline{a})$ and $P(c)$, so they are not in $\ind\,\underline{\CMP}(\B)$. The object $\tau^{-1} T_{b}$ is associated to $\tau^{-1}P(b)$, and by Remark \ref{rema util}, this module is not in ${\CMP}(\B)$. The only remaining positions are those labeled by a $\diamond$.

\begin{figure}
\centering
\def\svgwidth{4.9in}
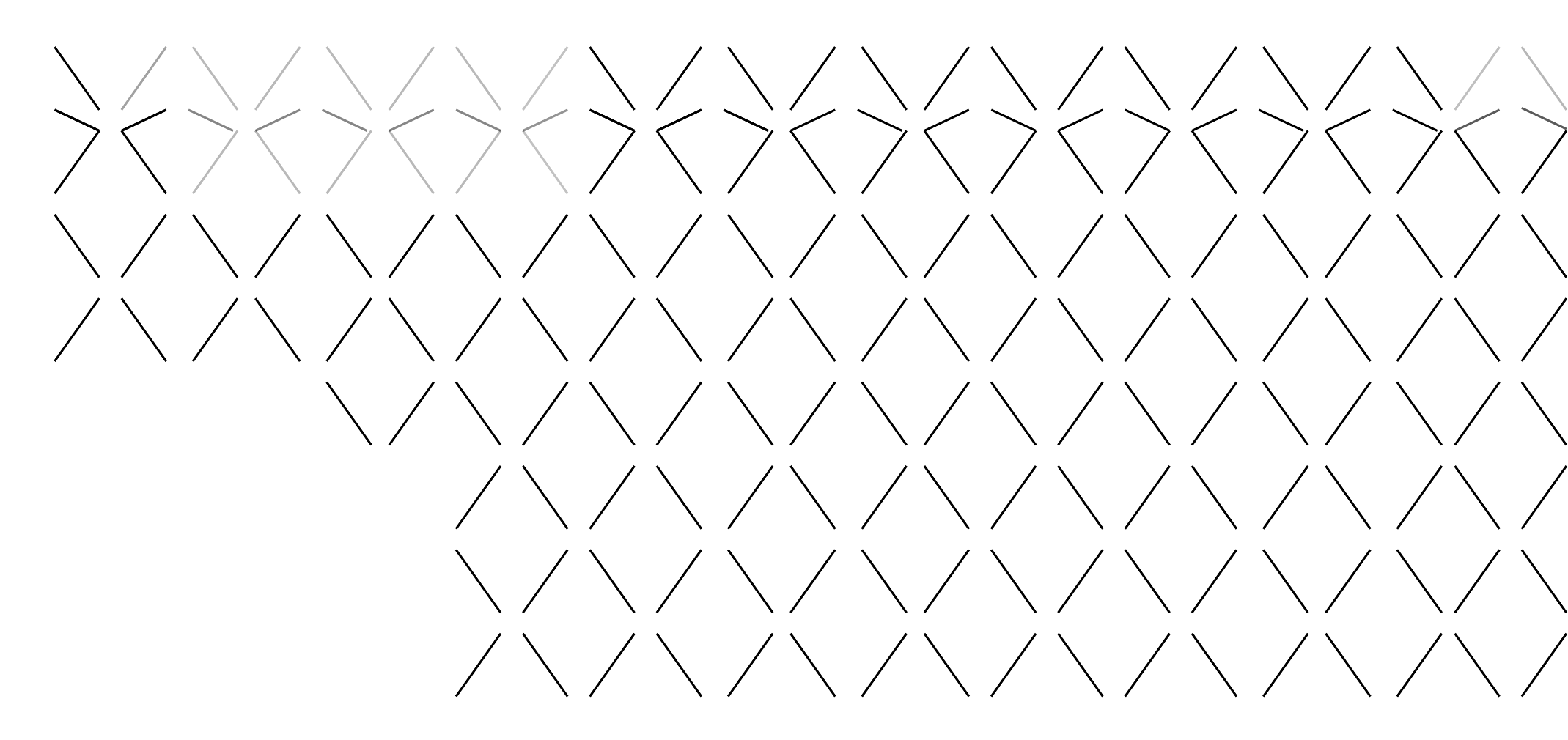
\caption{Lemma \ref{lema2y3} for a triangulation of type II. The position of $T_{\overline{a}}$, $T_{b}$ and $T_{c}$ make $\diamond$ the only admissible positions for arcs such that $\gamma $ crosses $\overline{a}$ and $M_\gamma$ can be in $\ind\,\underline{\CMP}$.}
\label{figura2}
\end{figure}

(b) The objects $X_\delta$ in $A_1$ are in the support of $\Hom_\Cc(T_{\overline{d}},-)$ and of $\Hom_\Cc(\tau^{-1}T_{\overline{d}},-)$, and by the same argument as in type II, we see that $M_\delta\notin \ind\,\underline{\CMP}(\B)$, for all $X_\delta$ in $A_1$.
\\
 The objects $X_\delta$ in $A_2$ are in the support of  $\Hom_\Cc(T_{a} \oplus T_{b}, -)$ and of $\Hom_\Cc(\tau^{-1}T_{a} \oplus \tau^{-1}T_{b}, -)$, and by the same argument as in type II, we see that
$M_\delta\notin\ind\,\underline{\CMP}(\B)$ for all $X_\delta $ in $A_2$.  
\\
 The area $A_3$ does not contain summands of $\tau T$. All $X_\delta$ in $A_3$ satisfy $\Hom_\Cc (\tau^{-1}T_{\overline{c}},X_\delta)\neq 0$. For the objects $X_\delta$ not labelled 2 in Figure \ref{figura3}, we can apply the same argument as before to prove that the associated $M_\delta$ are not in $\ind\,\underline{\CMP}(\B)$. Let $X$ be an indecomposable object whose position in the AR quiver is labelled 2 in Figure \ref{figura3}. This $X$ is such that dim$\Hom_\Cc (\tau^{-1}T_{\overline{c}},X)=2$ and, in this two-dimensional space, the subspace of morphisms that factor through $\tau T$ has dimension at most one. Then, there is a non-zero morphism in $\Hom_\Cc (\tau^{-1}T_{\overline{c}},X)$ that does not factor through $\tau T$, so $X$ is not associated to a module in ${\CMP}(\B)$. Therefore, $M_\delta\notin\ind\,\underline{\CMP}(\B)$ for all $X_\delta$ in $A_3$. The only remaining positions are those labeled by $\clubsuit$ and $\ast$.
\begin{figure}
\centering
\def\svgwidth{6.2in}
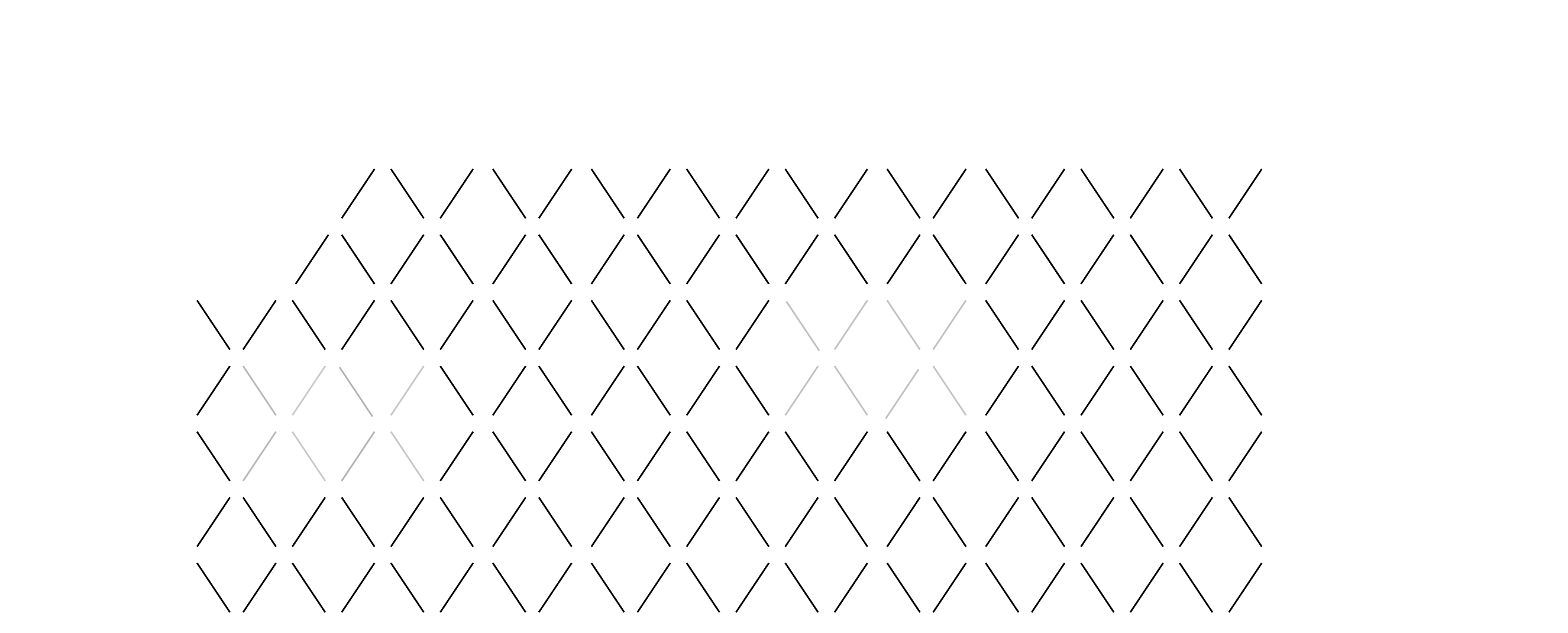
\caption{Lemma \ref{lema2y3} for a triangulation of type III. The position of $T_{a}$, $T_{b}$, $T_{\overline{c}}$ and $T_{\overline{d}}$ make $\ast$ and $\clubsuit$ the only admissible positions for arcs such that  $\gamma $ crosses $\overline{c}$ or $\overline{d}$ and $M_\gamma\in\ind\,\underline{\CMP}$.} 
\label{figura3}
\end{figure}
\end{proof}
The objects labeled by $\clubsuit$ in Figure \ref{figura3} are associated to non-projective indecomposable syzygies, since they are summands of the radical of $P(a)$, $P(\overline{c})$ and $P(\overline{d})$. We will denote the set of these modules by $\underline{\CMP}_\clubsuit$. It is easy to check that they are the syzygies in a periodic projective resolution. The corresponding arcs are shown in Figure \ref{ultimo1}.
\begin{figure}
\centering
\def\svgwidth{3.5in}
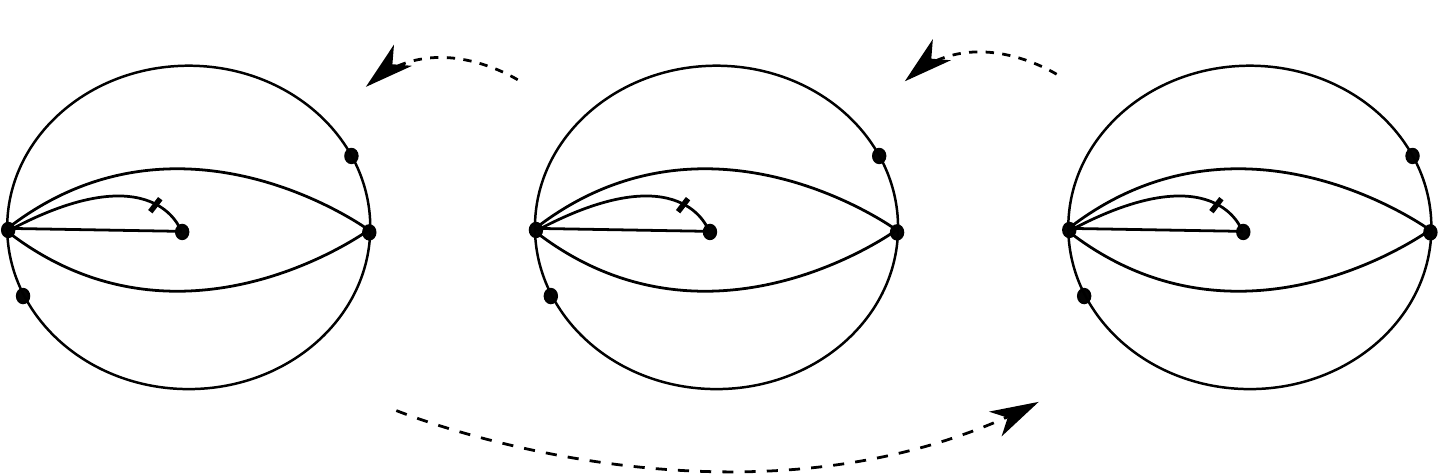
\caption{Arcs associated to the modules in $\underline{\CMP}_\clubsuit \subset\ind\,\underline{\CMP}(\B)$ and the action of $\Omega$.}
\label{ultimo1}
\end{figure}

\subsection {Triangulations of type I and the set $\underline{\CMP}_\odot$}\label{typeI}
In this subsection, we determine all the non-projective syzygies arising from the area that contains the puncture in a triangulation of type I. First, we need to introduce some notation. 
Recall that the AR translation $\tau$ corresponds to moving endpoints clockwise along the boundary and changing the tag at the puncture. If $q$ is a marked point on the boundary, we denote by $\tau(q)$ the next marked point in the clockwise direction.
\begin{df}\label{defcolored} In a triangulation of type I, a marked point $q$ is called \emph{colored} if it is on the boundary and $q$ or $\tau (q)$ is adjacent to the puncture.  
\end{df}
A triangle is \emph{internal} if none of its sides is a boundary segment. Given a triangulation of type I, label from $1$ to $m$, advancing clockwise, the arcs incident to the puncture. We consider indices modulo $m$. If the arcs $k, k+1$, for $k \in \lbrace 1,\ldots ,m\rbrace$, are sides of an internal triangle $\Delta$, we label the third side of $\Delta$ by $\overline{k}$. 
\\
Let $\Ee$ be the set of all internal triangles which have the puncture as a vertex. Two triangles in $\Ee$ are called \emph{consecutive} if they share an edge. If all triangles at the puncture are internal, the set $\Ee$ has exactly $m$ triangles. In this case all colored points are labeled red and blue, and we set $d=m$. If not all triangles at the puncture are internal, then $\Ee$ can be decomposed as $\Ee = \Ee_1 \cup \ldots \cup \Ee_l$ and each $\Ee_i =\lbrace  \Delta_1^{i},\ldots, \Delta^{i}_{t_i} \rbrace$ is such that $\Delta^{i}_{j+1}$ and $\Delta^{i}_{j}$ are consecutive, $\Delta^{i}_{j+1}$ follows $\Delta^{i}_{j}$ in clockwise order and $\Ee_i$ is maximal in the sense that if there is a triangle $\Delta$ in $\Ee$ consecutive to a triangle in $\Ee_i$, then $\Delta$ is in $\Ee_i$. For this case set $d_i = \vert \Ee_i \vert -1$ and $d =\sum d_i$. For example, in the left picture in Figure \ref{ejem}, we have $m=6$ because there are $6$ arcs incident to the puncture, and $\Ee=\Ee_1 \cup  \Ee_2$, where $\Ee_1 =  \lbrace [1, \overline{6}, 6] \rbrace$ and $\Ee_2 =  \lbrace [3, \overline{2}, 2] , [4,\overline{3},3], [5,\overline{4},4]\rbrace$, thus $d=2$.

\begin{df}\label{defredblue} A colored point $q$ is \emph{red} if $\tau (q)$ is not the last vertex in the last triangle in a set $\Ee_i$. A colored point $q$ is \emph{blue} if $q$ is not the first vertex of a first internal triangle in a set $\Ee_i$. 
\end{df} 
 In other words, there are the following three possibilities for a colored point $q$.
 \begin{itemize}
\item[(rb)] $q$ is {\em red and blue} if $q $ is adjacent to the puncture but $q$ is not the first vertex of the first triangle in a set $\Ee_i$, or $\tau(q)$ is adjacent to the puncture but $\tau(q)$ is not the last vertex of the last triangle in a set $\Ee_i$;
\item[(r)] $q$ is {\em red but not blue} if  $q$ is the first vertex of the first triangle in a set $\Ee_i$; in this case $q$ and $\tau^{-1}(q)$ are incident to the puncture.
\item[(b)] $q$ is {\em blue but not red} if $\tau(q)$ is the last vertex of the last triangle in a set $\Ee_i$; in this case $\tau(q)$ and $\tau^{2}(q)$ are incident to the puncture.
\end{itemize}

There are $m+d$ red points, and $m+d$ blue points. In the following, we consider indices modulo $m+d$. 
 We shall define a labeling of the colored points, assigning the labels $r_1,r_2,\ldots,r_{m+d}$ to the red points and the labels $b_1,b_2,\ldots,b_{m+d}$ to the blue points. These labels will be determined by the choice of the labels $r_1$ and $b_1$; the other labels will follow the order of the red or blue points along the boundary in clockwise direction. The labels $r_1$ and $b_1$ are assigned as follows.

\begin{enumerate}
\item If there is no internal triangles (thus $\Ee=\emptyset$), take any red-blue point and label it $r_1$ and $b_1$;
\item If all triangles at the puncture are internal (thus $\Ee=\Ee_1$), take any red-blue point and label it $r_1$ and $b_{m+d}$;
\item 
 If there are internal and non-internal triangles incident to the puncture (thus $\Ee=\Ee_1\cup\cdots\cup\Ee_l$ with $l\ge1$), let $q$ be the first vertex in the first triangle of $\Ee_1$, and label the point $\tau^{-1}(q)$ by $r_1 b_1$, and the point $q$ by $r_2$. Note that $q$ is red but not blue and $\tau^{-1}(q)$ is red and blue.


\end{enumerate}
See the examples in Figure \ref{ejem}. 
 Note that in the case (1)  every marked point on the boundary is red and blue and each is labeled $r_i,b_i$, for some $i$. In case (2), again all colored points are red and blue, but then each one is labeled $r_i,b_{i-1}$, for some $i$.

In case (3), the sequence of monochromatic points in clockwise order along the boundary is an alternating sequence of red points and blue points. Moreover the labels of the points that are both red and blue  are of the form $r_ib_i$, for some $i$, if the previous monochromatic point is blue, and it is of the form $r_ib_{i-1}$, for some $i$, if the previous monochromatic point is red.
Moreover, in each of the sets $\Ee_j$, the first vertex is red but not blue, all other vertices except for the last are red and blue and have labels $r_ib_{i-1}$, and the last vertex is red and blue and has label $r_ib_i$.

\begin{figure}
\centering
\def\svgwidth{6.2in}
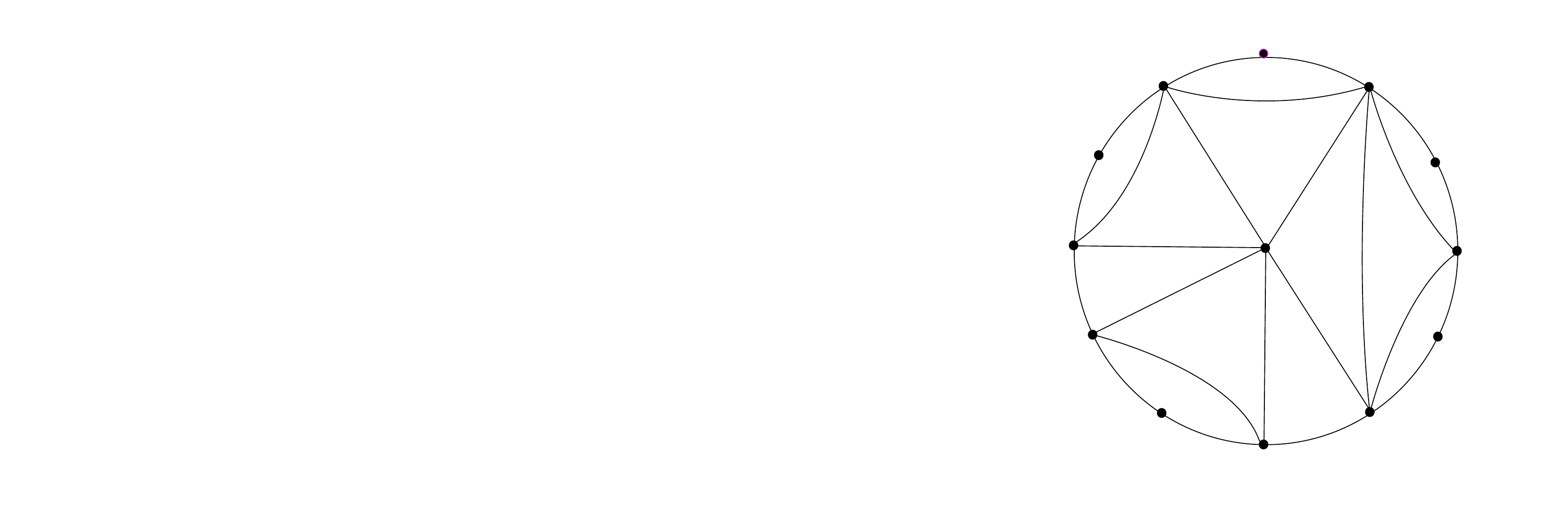
\caption{Triangulations of type I with colored points, from right to left cases (1)(2)(3). On the left example, arcs associated to the modules $M(r_7, b_2)$, $M(r_5, b_4)$ and $M(r_4, b_3)$.}
\label{ejem}
\end{figure}

Let $q$ and $s$ be two marked points on the boundary such that $s \neq q$  and $s \neq \tau (q)$, denote by $\gamma(q,s)$ the arc starting at $q$, going around the puncture in $\tau$ ( = clockwise) direction and ending at $s$. If $s=\tau (q) $ then $\gamma(q,s)$ is a boundary segment. Denote by $\gamma^{\Join}(q,q)$ the notched arc and by $\gamma(q,q)$ the plain arc incident to $q$ and the puncture. 

Now consider the special case where the marked points are colored points.
Suppose first that $r_i$ and $b_j$ are labels in two different marked points. Denote by $M(r_i, b_j)$ the module associated to the arc $\gamma(r_i,b_j)$, if the arc $\gamma(r_i,b_j)\notin \Tt$, and let $M(r_i, b_j)=0$ otherwise. Next suppose that $r_i$ and $b_j$ are labels in the same marked point $q$. If $\gamma(q,q) \in \Tt$, then let $M(r_i,b_j)$ be the module associated to $\gamma^{\Join}(r_i,b_j)$. In the other case, let $M(r_i,b_j)$ be the module associated to $\gamma(r_i,b_j)$. For example, see in Figure \ref{ejem}, $M(r_4,b_3)=M_{\gamma^\Join (r_4,b_3)}$ and $M(r_5,b_4)=M_{\gamma (r_5,b_4)}$.

The modules denoted by $M(r_i,b_j)$ will be important throughout this section. First let us study the modules $M(r_i,b_i)$ and $M(r_i,b_{i+1})$, $1 \leq i \leq m+d$.

\begin{lema}\label{remaii} Let $i\in\{1,2,\ldots, m+d\}.$ Then
\begin{itemize}
\item [\textup{(a)}] $M(r_i,b_{i+1})$  is projective.
\item [\textup{(b)}] $M(r_i,b_i)$ is projective or there exists an arc $\overline{k}=\gamma(q,s)$ such that $M(r_i,b_i)=M_{\gamma(q,\tau^{-1}(s))}.$
\end{itemize}
\end{lema}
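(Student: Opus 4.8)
The plan is to translate both statements into geometry in the punctured disc and then read off the answer from the known dictionary between tagged arcs, objects of the cluster category $\Cc$, and $\B$-modules. For part (a), I would first identify the arc $\gamma(r_i,b_{i+1})$ explicitly: by the labelling conventions in Definition~\ref{defredblue} and the three cases for $r_1,b_1$, consecutive indices $r_i$ and $b_{i+1}$ sit at adjacent colored points along the boundary in such a way that $\gamma(r_i,b_{i+1})$ is precisely $\tau^{-1}(a)$ for some arc $a$ incident to the puncture in the triangulation $\Tt$. Concretely, the blue point $b_{i+1}$ is one clockwise step past what the red point $r_i$ "sees", so moving the endpoints of $\gamma(r_i,b_{i+1})$ clockwise (applying $\tau$) and changing the tag at the puncture recovers $a\in\Tt$. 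Since the module associated to $\tau^{-1}(a)$ is the indecomposable projective $P(a)$ (stated explicitly in Section~\ref{sect 5}), this gives $M(r_i,b_{i+1})=P(a)$, hence projective. The only real care needed here is bookkeeping: one must check this uniformly across cases (1), (2), (3) of the labelling, and in case where $r_i$ and $b_{i+1}$ land on the same marked point one must verify the notched-versus-plain convention in the definition of $M(r_i,b_j)$ produces the right tagged arc; but in each case the endpoints and tag of $\gamma(r_i,b_{i+1})$ are determined combinatorially by the data $\Ee=\Ee_1\cup\cdots\cup\Ee_l$, so this is a finite check.

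For part (b), I would again compute the arc $\gamma(r_i,b_i)$. Here the two colored points $r_i$ and $b_i$ are "in phase", and there are two sub-cases depending on whether $r_i,b_i$ are labels of the same point or of different points (governed by whether the preceding monochromatic point is red or blue, as described in case (3)). If $\gamma(r_i,b_i)\in\Tt$, then it equals some $\tau(a)$ and the associated module is the indecomposable injective $I(a)$ — but we must instead land in the projective case, so I would show that when $\gamma(r_i,b_i)\in\Tt$ it is in fact one of the arcs incident to the puncture whose image under $\tau^{-1}$ is again in the triangulation, forcing $M(r_i,b_i)=0$ or projective; more precisely I expect that $\gamma(r_i,b_i)$ being in $\Tt$ forces it to be an arc of the form "$\overline{k}$" adjacent to an internal triangle at the puncture, and then $M(r_i,b_i)$ (taken notched, per the convention) is the projective at the corresponding vertex. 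If $\gamma(r_i,b_i)\notin\Tt$, then it is not projective, and I would produce the required arc $\overline{k}=\gamma(q,s)$: the arc $\gamma(r_i,b_i)$ wraps once around the puncture and ends at $b_i$, and $b_i$ is by definition positioned so that $\tau(s)=b_i$ for the outer vertex $s$ of the relevant internal triangle $[k,k+1,\overline{k}]$ with $\overline{k}=\gamma(q,s)$. Thus moving the endpoint $s$ of $\overline{k}$ one step counterclockwise (i.e. replacing $s$ by $\tau^{-1}(s)$) and adjusting the other endpoint gives exactly $\gamma(r_i,b_i)$, i.e. $M(r_i,b_i)=M_{\gamma(q,\tau^{-1}(s))}$. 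The identity $q$-endpoint should match automatically because $r_i$ is, by the red-point rule, the clockwise-successor position of the first vertex $q$ of that triangle.

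In both parts the underlying principle is the same: the maps $X_\gamma\mapsto M_\gamma$, $\tau$-translation = clockwise move + tag change, and "$\tau^{-1}(a)=$ projective $P(a)$ for $a\in\Tt$" from the setup of Section~\ref{sect 5} are all that is used; there is no homological input beyond Remark~\ref{rema util} and no appeal to Lemma~\ref{lema2y3}. The main obstacle is purely organizational rather than conceptual: carefully matching the combinatorics of the red/blue labelling — especially the asymmetry between the "first vertex of the first triangle in $\Ee_j$" (red not blue) and the "last vertex of the last triangle" (blue not red), and the three different choices of base labels $r_1,b_1$ — with the geometry of $\gamma(r_i,b_i)$ and $\gamma(r_i,b_{i+1})$, so that the claimed identifications hold in every case. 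I would handle this by reducing to case (3) (internal and non-internal triangles present), since cases (1) and (2) are degenerate specializations, and then arguing triangle-by-triangle within a fixed $\Ee_j$, using that within $\Ee_j$ the colored points carry labels $r_ib_{i-1}$ except for the first ($r$ only) and last ($r_ib_i$), which pins down exactly where the "$b_i$" versus "$b_{i+1}$" endpoint falls relative to the third side $\overline{k}$ of each internal triangle.
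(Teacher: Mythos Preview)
Your overall strategy---a case analysis on the position of the colored point $r_i$ relative to the internal triangles in $\Ee$, followed by reading off the arc $\gamma(r_i,b_{i+1})$ or $\gamma(r_i,b_i)$ and applying the dictionary between arcs and $\B$-modules---is exactly the paper's approach. However, the specific identifications you propose are incorrect, and the proof would fail if carried out as written.

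For part (a), you assert that $\gamma(r_i,b_{i+1})=\tau^{-1}(a)$ for some arc $a\in\Tt$ \emph{incident to the puncture}, so that $M(r_i,b_{i+1})=P(a)$. This is not what happens. The paper's case analysis (with $q$ the point labeled $r_i$) shows three possibilities: if $q$ is the first vertex of an internal triangle $\Delta$ with outer side $\overline{k}$, then $\gamma(r_i,b_{i+1})=\overline{k}\in\Tt$ and $M(r_i,b_{i+1})=0$; if $\tau(q)$ is the first vertex of such a $\Delta$, then $\gamma(r_i,b_{i+1})=\tau^{-1}(\overline{k})$ and $M(r_i,b_{i+1})=P(\overline{k})$, where $\overline{k}$ is the side \emph{not} incident to the puncture; and if $q,\tau(q)$ are both incident to the puncture with $\tau(q)$ not in an internal triangle, then $\gamma(r_i,b_{i+1})$ is a boundary segment and $M(r_i,b_{i+1})=0$. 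In no case is the relevant arc of the triangulation one of the radii $1,\ldots,m$.

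For part (b), you write that ``$\tau(s)=b_i$'' and that $r_i$ is ``the clockwise-successor position of the first vertex $q$''. Both are reversed. In the non-projective case the paper shows that $q$ \emph{itself} is labeled $r_i$ (it is the first vertex of an internal triangle, hence red but not blue) and that the point labeled $b_i$ is $\tau^{-1}(s)$, not $\tau(s)$; this is precisely why $M(r_i,b_i)=M_{\gamma(q,\tau^{-1}(s))}$. Your orientation conventions for $\tau$ versus $\tau^{-1}$ and for ``first vertex'' versus its neighbor are inverted, so the claimed equalities of arcs do not hold. The bookkeeping you flag as the main obstacle is indeed the entire content of the lemma, and it needs to be redone with the correct orientations.
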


\begin{proof}

(a) 
Let $q$ be the marked point labeled $r_i$.
Suppose first that $q$ is the first vertex in an internal triangle $\Delta \in \Ee_t$.   Let $k$ be the arc in $\Delta$ from $q$  to the puncture and  let $\overline{k}=\gamma(q,s)$ be the arc in $\Delta$ that is not incident to the puncture. Then $\tau^{-1}(s)$ and $s$ are the next colored points, their blue labels are $b_{i}$ and $b_{i+1}$ respectively, and so $M(r_i,b_{i+1})=M_{\overline{k}}$ is zero, hence projective.

Now suppose that $\tau(q)$ is the first vertex in an internal triangle $\Delta \in \Ee_t$. Let $\overline{k} = \gamma(\tau (q),s)$ be the arc in $\Delta$ that is not incident to the puncture. The colored points after $q$ are $\tau(q)$ and $\tau^{-1}(s)$. If $q$ is not adjacent to the puncture, then the blue labels of $\tau(q)$ and $\tau^{-1}(s)$ are $b_i$ and $b_{i+1}$, respectively. If $q$ is adjacent to the puncture then $q$ is labeled $r_i b_i$ and $\tau^{-1}(s)$ is labeled $b_{i+1}$. In both cases, $\gamma(r_i,b_{i+1})=\tau^{-1}(\overline{k})$ and $M(r_i,b_{i+1})=P(\overline{k})$ is projective.

It remains the case where there both $q$ and $\tau(q)$ are incident to the puncture and $\tau(q) $ is not a vertex of an internal triangle. Then $q$ is labeled $r_i b_i$ and the blue label of $\tau(q)$ is  $b_{i+1}$. Hence the curve $\gamma(r_i,b_{i+1})$ is a boundary segment and thus $M(r_i,b_{i+1})$ is zero, hence projective.

(b) Suppose $M(r_i,b_i)$  is not projective and 
let $q$ be the marked point labeled $r_i$. 
Suppose first that $\tau(q)$ is the last vertex in an internal triangle $\Delta$ in $\Ee_t$. Then $q$ is labeled $r_i, b_{i-1}$ and the blue label of $\tau(q)$ is $b_i$. Then, the curve $\gamma(r_i,b_i)$ is a boundary segment and $M(r_i,b_i)$ is zero, hence projective.

Now suppose that both $q$ and $\tau(q)$ are incident to the puncture. It follows from the construction of the labels that $q$ is labeled $r_ib_i$. Thus $M(r_i,b_i)=M_{\gamma^{\bowtie}(q,q)}=P(k)$, where $k$ is the arc from $\tau(q)$ to the puncture. This is a contradiction to our assumption that  $M(r_i,b_i)$ is not projective.

It remains the case where $q$ is the first vertex of an internal triangle $\Delta$. Let $\overline{k}=\gamma(q,s)$ be the side of $\Delta$ that is not incident to the puncture. Then the point $\tau^{-1}(s)$ is labeled $b_i$ and    $M(r_i,b_i)=M_{\gamma(q,\tau^{-1}(s))}$.
\end{proof}

We are now ready for the first main result of this section. The following theorem determines most of the modules in $\underline{\CMP}(\B)$ over configurations of type I and describes the action of $\Omega$ in geometric terms. We define the set $\underline{\CMP}_\odot$ by 

\[ \underline{\CMP}_\odot = \{M(r_i, b_j) \mid i=1,2,\ldots, m+d,  j= i+2,\ldots, i-1\}.\]

\begin{teo}\label{unlema} With the above notation, if $j \in \lbrace i+2,..., i-1\rbrace$, then
\begin{equation} \label{omega}
\Omega M(r_i, b_j) \simeq M(r_{j-1}, b_i).
\end{equation}
In particular, the map $\Omega \colon \underline{\CMP}_\odot \raw \underline{\CMP}_\odot$ is a bijection and $\underline{\CMP}_\odot \subset \ind\, \underline{\CMP}(\B)$.
\end{teo}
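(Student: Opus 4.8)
The plan is to establish the geometric formula \eqref{omega} by computing a minimal projective presentation of $M(r_i,b_j)$ and identifying its second syzygy, using Lemma~\ref{lema1} together with the dictionary between arcs and modules. Concretely, I would argue as follows. Fix $i$ and $j$ with $j\in\{i+2,\ldots,i-1\}$, so that the arc $\gamma=\gamma(r_i,b_j)$ winds once around the puncture in the clockwise ($\tau$) direction from the colored point $r_i$ to the colored point $b_j$. The first step is to describe the minimal $\add T$-approximation triangle $T_1\to T_0\to X_\gamma\to T_1[1]$ in the cluster category $\Cc$ in purely geometric terms: $T_0$ is the sum of the summands $T_a$ of $T$ such that the arc $a$ in the triangulation is "crossed last" by $\gamma$ near its endpoints, and $T_1$ is determined by which arcs of $\Tt$ cut the region swept out by $\gamma$. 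Here the relevant arcs of $\Tt$ are precisely the arcs $k,k+1,\ldots$ incident to the puncture together with the third sides $\overline{k}$ of the internal triangles, since these are exactly the arcs that $\gamma$ must cross as it goes around the puncture.

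The second step is to translate the triangle through the equivalence $F\colon\Cc/\add T[1]\to\mod\B$ of \eqref{bmr}, obtaining a minimal projective presentation $P_1\to P_0\to M(r_i,b_j)\to0$, and then to invoke Lemma~\ref{lema1}: there is an exact sequence $\tau^{-1}P_0\xrightarrow{f}\tau^{-1}M(r_i,b_j)\xrightarrow{g}P_1\to P_0\to M(r_i,b_j)\to0$, and $\Omega^2 M(r_i,b_j)\cong\tau^{-1}M(r_i,b_j)$ provided $f=0$. I would compute $\tau^{-1}M(r_i,b_j)$ geometrically: applying $\tau^{-1}$ moves the endpoints $r_i,b_j$ to their counterclockwise neighbors (and changes tags at the puncture when applicable), and by tracking the effect on the colored labels — using exactly the combinatorial bookkeeping set up before Lemma~\ref{remaii}, and the identities from Lemma~\ref{remaii}(a),(b) that describe the "boundary" cases $b_i$ and $b_{i+1}$ — one checks that $\tau^{-1}\gamma(r_i,b_j)$ corresponds to the arc $\gamma(r_{j-1},b_i)$, that is, $\tau^{-1}M(r_i,b_j)\simeq M(r_{j-1},b_i)$. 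To get the $\Omega$-statement rather than $\Omega^2\tau^{-1}$, I would instead note that \eqref{omega} is equivalent, via $\tau$, to $\Omega M(r_i,b_j)\simeq M(r_{j-1},b_i)$ and read off $\Omega M(r_i,b_j)$ directly as the kernel of the projective cover $P_0\to M(r_i,b_j)$, which geometrically is again an arc of the form $\gamma(r_\bullet,b_\bullet)$; the key point is that the cokernel side of the approximation triangle is itself controlled by the same family of arcs, so $\Omega$ stays within the family $\{M(r_i,b_j)\}$.

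The third step is to rule out the vanishing/degenerate cases and to conclude. Since $j\in\{i+2,\ldots,i-1\}$ I must check that the output index $j-1\in\{i+1,\ldots,i-2\}$ lies in the range so that $M(r_{j-1},b_i)\in\underline{\CMP}_\odot$ and is not one of the "excluded" modules $M(r_k,b_k)$ or $M(r_k,b_{k+1})$ which Lemma~\ref{remaii} shows are projective (or zero); this is where the precise endpoints of the index range $i+2,\ldots,i-1$ are used, and it shows $\Omega$ maps $\underline{\CMP}_\odot$ into itself. Bijectivity of $\Omega\colon\underline{\CMP}_\odot\to\underline{\CMP}_\odot$ then follows because the map $(i,j)\mapsto(j-1,i)$ on the relevant index set is a bijection (it is visibly injective on a finite set, since one recovers $j=i'+1$ and $i=j'$ from $(i',j')=(j-1,i)$). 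Finally, once we know $\Omega M(r_i,b_j)$ is a nonzero non-projective syzygy, the equivalence (a1)$\Leftrightarrow$(a2) of Theorem~\ref{teo} gives $M(r_i,b_j)=\Omega^{-1}(\Omega M(r_i,b_j))$ is also — more directly, $M(r_i,b_j)\simeq\Omega M(r_{i'},b_{j'})$ for suitable $(i',j')$ by surjectivity, hence is a non-projective syzygy, so $\underline{\CMP}_\odot\subset\ind\,\underline{\CMP}(\B)$.

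The main obstacle will be the second step: carrying out the geometric computation of the minimal projective presentation and of $\tau^{-1}M(r_i,b_j)$ rigorously, because the combinatorics of which arcs of the triangulation are crossed by $\gamma(r_i,b_j)$ genuinely depends on the local configuration of internal versus non-internal triangles at the puncture (the sets $\Ee_1,\ldots,\Ee_l$), and the labeling conventions for red/blue points shift by one in the cases (rb), (r), (b). I expect to need a careful case analysis mirroring the proof of Lemma~\ref{remaii}, checking in each case that the endpoint-shifting induced by the projective cover and by $\tau^{-1}$ respects the label arithmetic $r_i\mapsto r_{j-1}$, $b_j\mapsto b_i$; verifying that $f=0$ in Lemma~\ref{lema1} reduces, as in the proof of Theorem~\ref{teo} (a2)$\Rightarrow$(a3) and equation \eqref{eq1}, to $\Ext^1_\B(M(r_i,b_j),P_0)=0$, which holds once we know $M(r_i,b_j)\in\CMP(\B)$ — but since that is part of what we are proving, I would instead verify $f=0$ directly from the geometry, observing that the morphism space $\Hom_\Cc(\tau^{-1}T_0,X_\gamma)$ vanishes because no summand of $\tau^{-1}T_0$ has $X_\gamma$ in the support of its $\Hom_\Cc$-functor in this range of $j$, by the crossing criterion recalled before the subsection on type II.
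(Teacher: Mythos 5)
There is a genuine gap, and it sits exactly at the heart of the statement: the identification of the \emph{first} syzygy. Your main mechanism is Lemma~\ref{lema1}, but that lemma controls $\Omega^{2}$, not $\Omega$: when $f=0$ it gives $\Omega^{2}M\simeq\tau^{-1}M$. Your intermediate claim that ``$\tau^{-1}\gamma(r_i,b_j)$ corresponds to $\gamma(r_{j-1},b_i)$, that is, $\tau^{-1}M(r_i,b_j)\simeq M(r_{j-1},b_i)$'' is false: granting \eqref{omega}, one has $\Omega^{2}M(r_i,b_j)=\Omega M(r_{j-1},b_i)=M(r_{i-1},b_{j-1})$, so on $\underline{\CMP}_\odot$ the correct relation is $\tau^{-1}M(r_i,b_j)\simeq M(r_{i-1},b_{j-1})$ (both indices drop by one), not $M(r_{j-1},b_i)$; and literally, $\tau^{-1}$ of the arc moves \emph{both} endpoints one marked point counterclockwise, which in general lands on uncolored points, so it is not even an arc of the form $\gamma(r_\bullet,b_\bullet)$. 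Moreover, your route to $f=0$ via $\Ext^1_\B(M(r_i,b_j),P_0)=0$ presupposes $M(r_i,b_j)\in\CMP(\B)$, which is part of the conclusion; you acknowledge this, but the promised ``direct geometric verification'' is never carried out, and in any case it would only give you $\Omega^{2}$, not the formula \eqref{omega}.

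When you then retreat to ``read off $\Omega M(r_i,b_j)$ directly as the kernel of the projective cover,'' you are in effect proposing the paper's actual proof, but you leave its content as an assertion that the kernel ``is again an arc of the form $\gamma(r_\bullet,b_\bullet)$.'' The paper's argument is precisely this computation, done as a case analysis on the relative positions of the colored points $r_i,b_i,r_{j-1},b_j$ (the cases of Figure~\ref{cuadro}, with three extra special cases): in each case $M(r_i,b_j)$ is written explicitly as a string module, its projective cover (for instance $P(k)\oplus P(\overline{k+t})$, or a biserial $P(\overline{k})$ in the special cases) is read off from the triangulation, and the kernel is identified as the string module $M(r_{j-1},b_i)$ --- this is where the index arithmetic $r_i\mapsto r_{j-1}$, $b_j\mapsto b_i$ actually comes from, and it is the step your proposal does not supply. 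Your final step is fine and matches the paper: the index map $(i,j)\mapsto(j-1,i)$ is a bijection on the admissible range, and once \eqref{omega} is known, together with the observation (which you should state) that each $M(r_i,b_j)$ is nonzero and differs from its projective cover, every element of $\underline{\CMP}_\odot$ is a non-projective syzygy, so Theorem~\ref{teo} (a1)$\Leftrightarrow$(a2) gives $\underline{\CMP}_\odot\subset\ind\,\underline{\CMP}(\B)$. But without the explicit syzygy computation (or an equivalent verification), equation \eqref{omega} itself remains unproved.
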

\begin{proof}
We prove the statement by analyzing the possible relative positions of the points $r_i$, $b_i$ and $r_{j-1}$, $b_j$ where $i$ is arbitrary and $j= i+1, \ldots, i-1$. The possibilities are listed in Figure \ref{cuadro}. The first row of that figure shows possibilities for $r_i$ and $b_i$ and the second row shows possibilities for $r_{j-1}$ and $b_j$. The last row illustrates three special cases. 

Choosing a module $M(r_i,b_j)$ corresponds to either choosing one of the three cases 1,2,3 of the first row and one of the three cases a,b,c of the second row, or choosing one of the three cases d,e,f of the third row. In each case the syzygy $\Omega M(r_i,b_j)$ is easily computed and the results are shown in  Figures \ref{figu1}-\ref{figu3} at the end of the paper. We present the computation in detail for the cases (1c), (2b) and (d), and leave the other to the reader. Notice that the formula also works for the configuration of type I with only two arcs incident to the puncture; that case is also left to the reader.

Throughout the proof we use the notation $\beta_k$ for the maximal non-zero path starting at the vertex $k'\neq k$ such that there is an arrow $\overline{k} \raw k'$. Note that $k'$ is unique if it exists. 

Case (1c): the module $M(r_i,b_j)$  is given by the string $k \raw  \cdots \raw k+t \law \overline{k+t}  \raw \beta_{k+t}$ and top$ M(r_i,b_j) = S(k)\oplus S(\overline{k+t})$, where $k$ is the first arc crossed by $\gamma(r_i,b_j)$ and $k+t$ is the last arc incident to the puncture that is crossed by $\gamma(r_i,b_j)$. The projective cover $P(k) \oplus P(\overline{k+t})$ of $M(r_i,b_j)$ consist of a uniserial projective $P(k)$ given by the string $k \raw k+1 \raw \cdots \raw k-2$ and a projective $P(\overline{k+t})$ given by the string $\beta_{k+t-1}^{-1} \law \overline{k+t-1} \law k+t \law \overline{k+t}  \raw \beta_{k+t}$. Therefore, the syzygy $\Omega M(r_{i},b_j)$ is given by the string $\beta_{k+t-1}^{-1} \law k+t \raw k+t+1 \raw \cdots \raw k-2$, this module is denoted by $M(r_{j-1},b_i)$, see Figure \ref{figu1}.

Case (2b): the module $M(r_i,b_j)$ is given by the string $k \raw \cdots \raw k+t \law \overline{k+t} \raw \beta_{k+t}$ and top$ M(r_i,b_j) = S(k)\oplus S(\overline{k+t})$, where $k$ is the first arc crossed by $\gamma(r_i,b_j)$ and $k+t$ is the last arc incident to the puncture that is crossed by $\gamma(r_i,b_j)$. The projective cover $P(k)\oplus P(\overline{k+t})$ of $M(r_i,b_j)$ consists of a projective $P(\overline{k+t})$ given by the string $\beta_{k+t}^{-1} \law \overline{k+t} \raw k+t$ and a  projective $P(k)$ supported on the paths $k \raw \overline{k-1} \raw \beta_{k-1}$ and $k \raw k+1 \raw \cdots \raw k-1 = k \raw \overline{k-1} \raw k-1$ so the dimension vector of $P(k)$ is $(d_i)_{i \in Q_0}$ with $d_i=1$ for $i=k, k+1,\ldots,k-1,\overline{k-1}$ and for all $i$ in the support of $\beta_{k-1}$, and $d_i=0$ for all other $i$. Then $\Omega M(r_i,b_j)$ is given by the string $\beta_{k-1}^{-1} \law \overline{k-1} \raw k-1 \law k-2 \law \cdots \law k+t$. Thus, $\Omega M(r_i,b_j)=M(r_{j-1},b_i)$. See Figure \ref{figu2}.

The last row in Figure \ref{cuadro} contains three special cases, where the projective is not like in the other cases.

Case (d): the module $M(r_i,b_j)$ is given by the string $\overline{k} \raw \beta_{k} $ and top$ M(r_i,b_j)=S(\overline{k})$. The projective cover $P(\overline{k})$ is a biserial projective given by the string $\beta_{k-1}^{-1} \law \overline{k-1} \law k \law \overline{k} \raw \beta_{k}$. Then, $\Omega M(r_i,b_j)$ is given by the string $k \raw \overline{k-1} \raw \beta_{k-1}$, so $\Omega M(r_i,b_j)= M(r_{j-1},b_i)$.
This shows equation (\ref{omega}).

In order to show the last statement of the theorem, note that
$\Omega$ is bijective in the set $\underline{\CMP}_\odot$ because, for the indices under consideration, the map $(r_i,b_j) \raw (r_{j-1},b_i)$ is bijective. Each pair $(r_i,b_j)$ has a unique predecessor $(r_j,b_{i+1})$ and the indices satisfy the condition $i+1 \in \{ j+2, \ldots j-1\}$ since $j \neq i+1$ and $j \neq i$. 
Moreover, we see that for each pair $(r_i,b_j)$ the module $M(r_i,b_j)$ is not zero and the projective cover $P(M(r_i,b_j))$ is different from $M(r_i,b_j)$. Thus $M(r_i,b_j)$  is not projective and its syzygy is not trivial. This shows that  $\underline{\CMP}_\odot \subset \ind\,\underline{\CMP}(\B)$.
\end{proof}

\begin{figure}\label{cosas}
\centering
\def\svgwidth{6.4in}
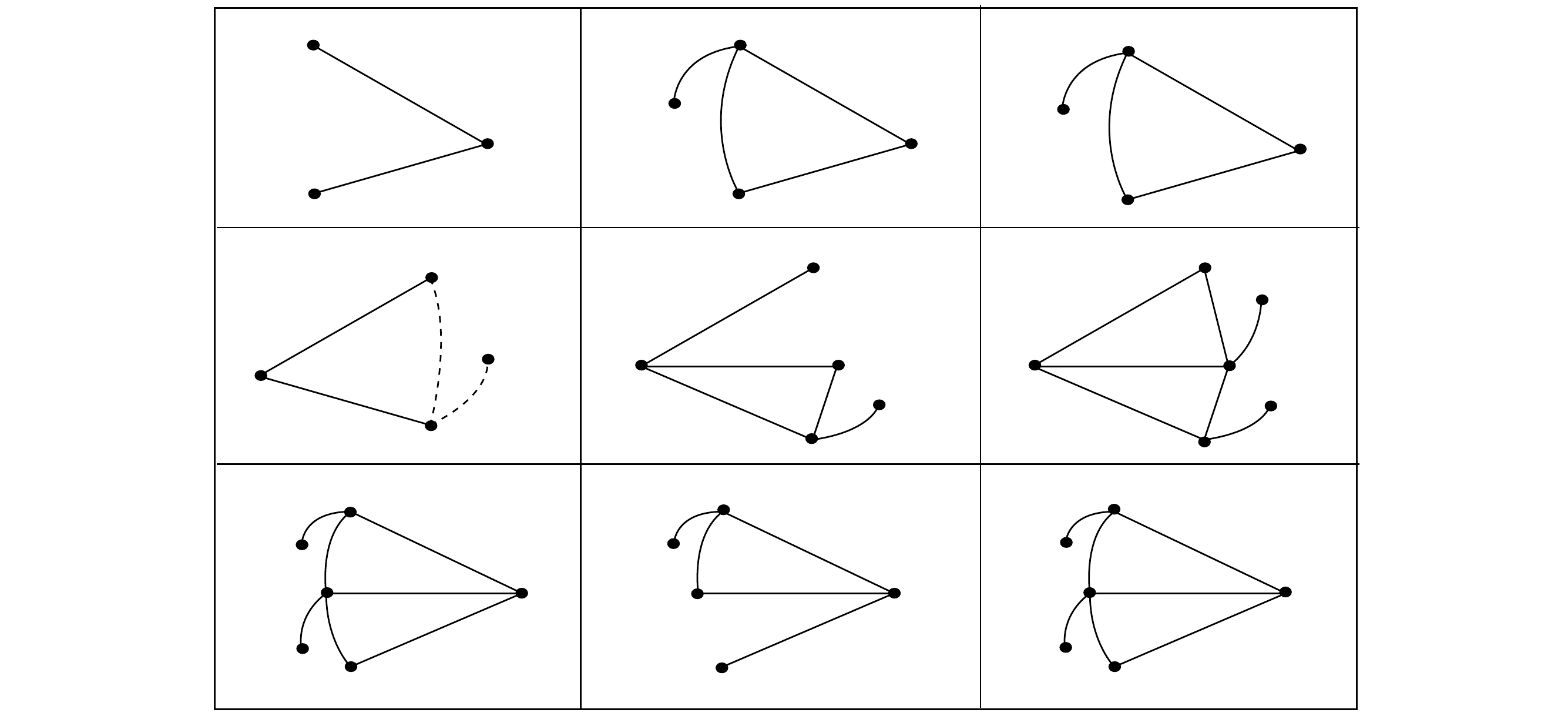
\caption{Possible relative positions of $r_i$, $r_{j-1}$, $b_i$ and $b_j$.}
\label{cuadro}
\end{figure}

\subsection{Full description of the category $\underline{\rm CMP}(\B)$}
In Theorem \ref{unlema}, we gave an explicit description of the subset  $\underline{\CMP}_\odot$ of $\ind\, \underline{\CMP}(\B)$.
Although this is the most interesting part of $ \underline{\CMP}(\B)$, it is in general not all of it. In this subsection, we describe  $ \underline{\CMP}(\B)$ completely.

Before formulating our main result, we need two preparatory lemmas.

Let $\Tt$ be a triangulation of type I and
let $\Delta$ be an internal triangle in $\Tt$ incident to the puncture. As usual we denote the sides of $\Delta$ by $k,k+1$ and $\overline{k}$, where $\overline{k}$ is the side opposite to the puncture.

\begin{lema}\label{lema02}  Let $\overline{k}=\gamma(q,s)$ be an arc as above and 
let $\gamma$ be an arc that crosses the arcs $\overline{k}=\gamma(r,s)$ and $\tau^{-1} (\overline{k})$. 
If $M_\gamma\in  \CMP(\B)$ then  $\gamma =\gamma(\tau^{-1}(s),q)$. 

\end{lema}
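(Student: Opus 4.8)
The key tool, as in Lemmas \ref{lema2y3} and the remarks of Section \ref{typeI}, is Remark \ref{rema util}(a): $M_\gamma\in\CMP(\B)$ forces $\Hom_\B(\tau^{-1}P(t),M_\gamma)=0$ for every vertex $t$, equivalently (via the equivalence (\ref{bmr})) $\Hom_\Cc(\tau^{-1}T_t, X_\gamma)=0$ for every indecomposable summand $T_t$ of $T$ that is not an injective, together with the sharper requirement that any morphism that does exist must factor through $\add \tau T$. The plan is to run the same support analysis that was used in the proof of Lemma \ref{lema2y3}, but now localized around the single internal triangle $\Delta$ with sides $k$, $k+1$, $\overline k=\gamma(q,s)$. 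First I would note that the arcs $\overline k$ and $\tau^{-1}(\overline k)$ correspond to $X_{\overline k}=\tau T_{\overline k}$ and $\tau^{-1}T_{\overline k}$ respectively, and that ``$\gamma$ crosses $\overline k$ and $\tau^{-1}(\overline k)$'' means exactly that $X_\gamma$ lies in the support of both $\Hom_\Cc(T_{\overline k},-)$ and $\Hom_\Cc(\tau^{-1}T_{\overline k},-)$.

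The core step is then a mesh/knitting argument. Consider the AR-mesh structure of the cluster category of type $\mathbb D$ restricted to the ``rectangle'' of arcs that cross both $\overline k$ and $\tau^{-1}(\overline k)$. Since $\overline k$ is one side of the internal triangle $\Delta$, the arcs crossing $\overline k$ form a standard interval, and adding the crossing condition on $\tau^{-1}(\overline k)$ cuts this down to those arcs whose two endpoints lie in prescribed boundary intervals — one endpoint between $\tau^{-1}(s)$ and $r$ going one way, the other near $q$. I would argue that for every such $\gamma$ other than $\gamma(\tau^{-1}(s),q)$ itself, the object $X_\gamma$ admits a nonzero morphism from $\tau^{-1}T_{\overline k}$ that does not factor through $\add\tau T$ (because the region in the AR quiver containing these $X_\gamma$ contains no summand of $\tau T$, exactly as in the areas $A$, $A_i$ of Lemma \ref{lema2y3}); hence $M_\gamma\notin\CMP(\B)$. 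The single surviving position is $\gamma(\tau^{-1}(s),q)$, which by construction is the third side of the triangle obtained from $\Delta$ by one elementary move / one application of $\tau^{-1}$ to $\overline k$ at the appropriate endpoint, so it sits precisely where the support of $\Hom_\Cc(\tau^{-1}T_{\overline k},-)$ meets $\add\tau T$-factorization.

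A couple of bookkeeping points need care. One must separately dispose of the boundary cases where $\gamma(\tau^{-1}(s),q)$ degenerates (e.g. if $\tau^{-1}(s)=q$ or if the relevant arc is a boundary segment), and one must handle the possibility that $\tau^{-1}(\overline k)$ is itself an arc in $\Tt$, in which case crossing it is impossible and the hypothesis set is empty — so the statement holds vacuously; this mirrors the convention $M_\gamma=0$ otherwise. One also has to check that the few objects of the form $\tau^{-1}T_t$ or $T_t$ (projectives) that happen to lie in the crossing region are excluded by Remark \ref{rema util}(a) directly, just as the objects $T_{\overline a}$, $T_{\overline c}$, $\tau^{-1}T_b$ were excluded in Lemma \ref{lema2y3}.

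**Main obstacle.** The genuine difficulty is the dimension-counting subtlety that already appeared in part (b) of Lemma \ref{lema2y3}: at certain positions (the ones labeled $2$ there) $\dim\Hom_\Cc(\tau^{-1}T_{\overline k}, X_\gamma)=2$ while the subspace factoring through $\add\tau T$ might be one-dimensional, so one cannot merely invoke ``the region contains no summand of $\tau T$'' — one must exhibit an explicit non-factoring morphism, typically by writing down the relevant string modules over the gentle/cluster-tilted quiver around $\Delta$ and checking that the composite $\tau^{-1}T_{\overline k}\to \tau T'\to X_\gamma$ cannot reach a basis element of the Hom-space. Carrying that computation through uniformly over the finitely many relative positions of the endpoints of $\gamma$ (and over the three sub-types of the triangle $\Delta$ according to how many of its neighbors at the puncture are internal) is the main labor of the proof; everything else is a direct transcription of the method of Lemma \ref{lema2y3}.
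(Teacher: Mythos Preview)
Your overall strategy---translate the double-crossing hypothesis into a support condition in the AR~quiver of $\Cc$ and then apply Remark~\ref{rema util}(a)---is exactly the paper's approach. However, the paper's execution differs from your plan in one key respect, and this difference is precisely what makes the proof short.

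You propose to use only the single summand $\tau^{-1}T_{\overline{k}}$ and you anticipate that the main obstacle will be a dimension-$2$ subtlety of the kind that appeared in Lemma~\ref{lema2y3}(b), to be resolved by explicit string computations. That is not what happens here. The paper instead splits the relevant region of the AR~quiver into two pieces $A_1$ and $A_2$ and uses \emph{two different} summands of $T$: for $X_\gamma\in A_1$ one takes a nonzero $f\in\Hom_\Cc(\tau^{-1}T_{\overline{k}},X_\gamma)$, while for $X_\gamma\in A_2$ one switches to a nonzero $f\in\Hom_\Cc(\tau^{-1}T_{k+1},X_\gamma)$, where $k+1$ is one of the two sides of $\Delta$ incident to the puncture. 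The reason for the switch is that sectional paths from $\tau^{-1}T_{\overline{k}}$ to objects in $A_2$ may meet $\add\tau T$ (near $\tau T_k$), so the naive ``no summand of $\tau T$ in the region'' argument is not available for $\tau^{-1}T_{\overline{k}}$ there; replacing $\overline{k}$ by $k+1$ restores it. With this device both regions are handled by the elementary ``the region contains no summand of $\tau T$'' argument, and no dimension counting or string-module computation is needed.

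So the gap in your plan is not the dimension-$2$ issue you flagged (which does not arise in this lemma), but rather the missing second vertex $k+1$. Once you bring $T_{k+1}$ into play, the bookkeeping cases you list (degenerate $\gamma(\tau^{-1}(s),q)$, objects of the form $T_t$ or $\tau^{-1}T_t$ in the region, etc.) do not need separate treatment: after removing $A_1\cup A_2$, the only remaining position is $X_{\gamma(\tau^{-1}(s),q)}$.
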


\begin{proof}
We work in the AR quiver of the cluster category $\mathcal{C}$, see Figure \ref{label2}. 
The relative positions of $T_{\overline{k}}, T_k, T_{k+1}$  are indicated in the figure. The condition that the arc $\gamma $ crosses $\overline{k}$ and $\tau^{-1}(\overline{k})$ translates to the condition that $\Hom_\Cc(T_{\overline{k}},X_\gamma)\ne 0$ and  $\Hom_\Cc(\tau^{-1}T_{\overline{k}},X_\gamma)\ne 0$. It then follows from the structure of the AR quiver
that $X_\gamma$ must lie in the shaded region in Figure \ref{label2}.
Suppose first that $X_\gamma \in A_1$ and let $f$ be a non-zero morphism in $\Hom_\Cc(\tau^{-1} T_{\overline{k}}, X_\gamma)$, then it cannot factor through $\tau T$ because, by the position of $T_{\overline{k}}$, $A_1$ does not contain summands of $\tau T$. Thus, there is a non-zero morphism in $\Hom_\B(\tau^{-1}P(\overline{k}),M_\gamma)$ and $M_\gamma\notin {\CMP(\B)}$ by Remark~\ref{rema util} (a). Now suppose that $X_\gamma \in A_2$ and let $f $ be a non-zero morphism in $\Hom_\Cc(\tau^{-1} T_{k+1}, X_\gamma)$, then it cannot factor through $\tau T$ because, by the position of $T_{\overline{k}}$ and $T_{k+1}$, $A_1$ and $A_2$ do not contain summands of $\tau T$. Thus, there is a non-zero morphism in $\Hom_\B(\tau^{-1}P(k+1),M_\gamma)$ and $M_\gamma\notin  \CMP(\B)$ by Remark~\ref{rema util} (a). The only other object associated to an arc crossing the arcs $\overline{k}$ and $\tau^{-1}(\overline{k})$ is $X_{\gamma(\tau^{-1}(s),q)}$. The lemma follows.
\end{proof}

\begin{figure}
\centering
\def\svgwidth{6.3in}
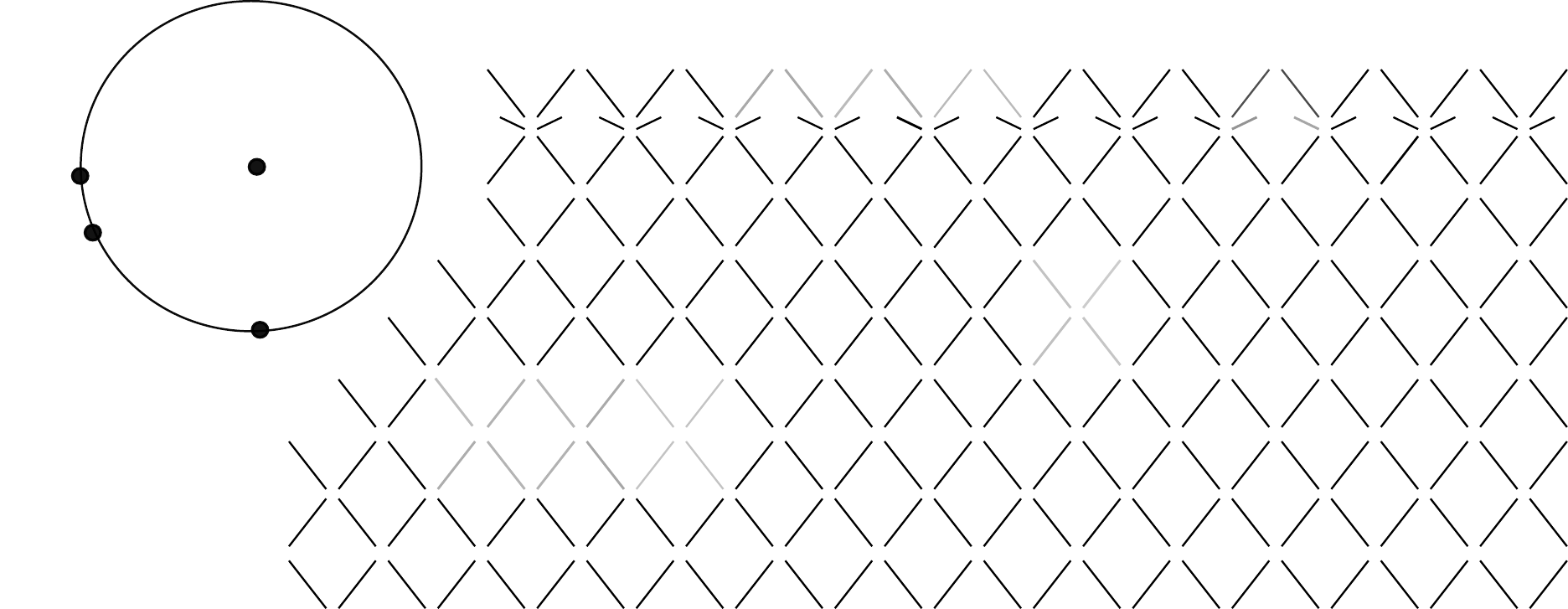
\caption{Proof of Lemma \ref{lema02}. The arc $\gamma(\tau^{-1}(s),q)$ is denoted by $\alpha$ in the figure.}
\label{label2}
\end{figure}

\subsubsection{The sets $\Xx_k$ and $\Mm_k$}
Now, we set the last results needed to give a complete geometric description of $\underline{\CMP}(\B)$. Recall the Definition \ref{types}. Let $\Tt$ be a triangulation and let $\overline{k}$ be an arc such that
\begin{enumerate}
\item[(a)] $\overline{k}$ is a side of an internal triangle such that the other two sides are incident to the puncture, if $\Tt$ is of type I,
\item[(b)] $\overline{k}$ is the arc $\overline{a}$ in Figure \ref{2 y 3} when $\Tt$ is of type II, 
\item[(c)] $\overline{k}$ is one of the arcs $\overline{c}$ or $\overline{d}$ in Figure \ref{2 y 3} when $\Tt$ is of type III.
\end{enumerate}
By cutting along $\overline{k}=\gamma(q,s)$, we obtain two pieces of the disc. Denote by $R_k$ the piece that does not contain the puncture. See the Figure \ref{labelG}.


Let $\Xx_k$ be the set of objects $X_\gamma$ in the cluster category such that $\gamma$ lies in $R_k$, or $\tau (\gamma)$ lies in $R_k$. The objects in $\Xx_k$ lie in the shaded region in Figure \ref{labelG}. The objects $X_\gamma \in \Xx_k$ such that $\gamma \notin R_k$ and $\tau (\gamma)\in R_k$ are the objects marked with a bullet together with $T_{\overline{k}}$.
\\
Let $\Mm_k$ be the set of modules $M_\gamma$ such that $X_\gamma \in \Xx_k$.

\begin{figure}
\centering
\def\svgwidth{6in}
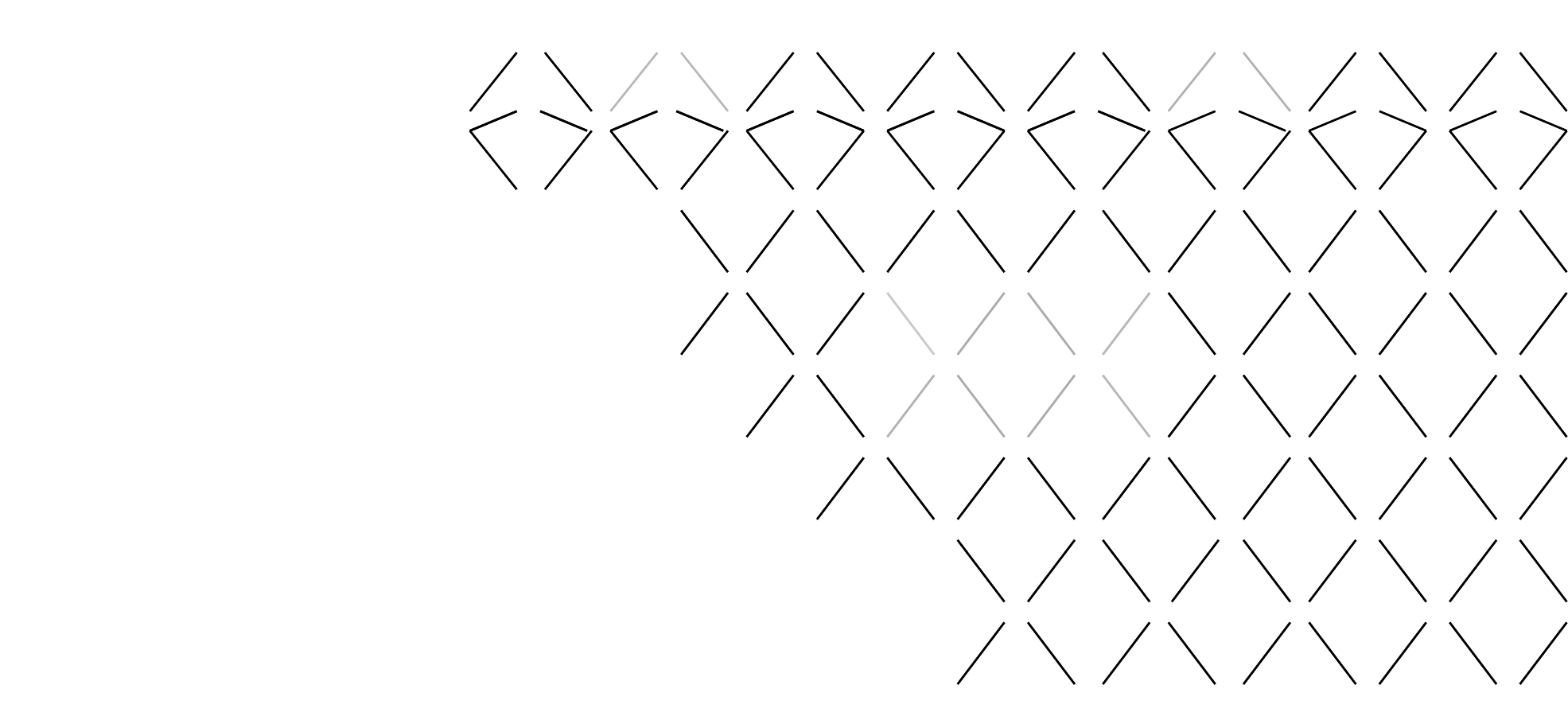
\caption{On the right Region $R_k$ shaded. On the left, the region shaded in the AR quiver shows the objects in $\Xx_k$.}
\label{labelG}
\end{figure}

\subsubsection{ The set $\underline{\CMP}_\Delta$}

Recall that we denote by $\Ee$ the set of all internal triangles that are incident to the puncture. We now consider the other internal triangles.
For every internal triangle $\delta\notin\Ee$, we construct 3 arcs by performing an elementary move in counterclockwise direction to each of the sides of $\delta$ such that the resulting arc runs through the interior of $\delta$. 
We denote by $\underline{\CMP}_\Delta$ the set of all modules $M_\gamma$ for which there exists an internal triangle $\delta\notin\Ee$ such that the arc $\gamma$ is one of the three arcs given by the above construction with respect to $\delta$. As we have seen in section~\ref{sect 4}, we have $\underline{\CMP}_\Delta \subset\ind\,\underline{\CMP}(\B)$. In fact 
$M_\gamma$ is a radical summand of the projective $P(a)$ where $a$ is the side of $\delta $ that is crossed by $\gamma$.

\begin{lema}\label{lemadelta}  If a module $M_\gamma \in \Mm_k$ is a non-projective syzygy then $M_\gamma\in\underline{\CMP}_\Delta$
\end{lema}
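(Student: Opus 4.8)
The plan is to argue by contradiction, showing that if $M_\gamma \in \Mm_k$ is a non-projective syzygy but $M_\gamma \notin \underline{\CMP}_\Delta$, then we can produce a non-zero morphism $\Hom_\B(\tau^{-1}P(t), M_\gamma) \ne 0$ that does not factor through $\add \tau T$, for some $t \in Q_0$, contradicting $M_\gamma \in \CMP(\B)$ via Remark \ref{rema util}(a). First I would set up the picture: by definition of $\Mm_k$ the object $X_\gamma$ lies in the shaded region of Figure \ref{labelG}, i.e.\ either $\gamma$ lies in the region $R_k$ that is cut off by $\overline k = \gamma(q,s)$, or $\tau(\gamma)$ lies in $R_k$. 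The arcs $\gamma$ contained entirely in $R_k$ correspond to the indecomposable modules of a certain "sub-problem" supported away from the puncture, and for these the relevant syzygies are precisely the ones attached to the internal triangles $\delta \notin \Ee$ inside $R_k$, which is exactly $\underline{\CMP}_\Delta$ restricted to $R_k$; this is the content of the unpunctured analysis of section \ref{sect 4}, which I would invoke directly since $R_k$ with $\overline k$ as a boundary segment behaves like an unpunctured piece.

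The key steps, in order, are: (1) Dispose of the case $\gamma \subset R_k$ by reduction to section \ref{sect 4}: here $M_\gamma$ is a $\B$-module whose support does not involve the arcs incident to the puncture, so being a non-projective syzygy forces $\gamma$ to be one of the three arcs attached to an internal triangle $\delta \notin \Ee$ inside $R_k$ — hence $M_\gamma \in \underline{\CMP}_\Delta$. (2) Treat the remaining objects in $\Xx_k$, namely those $X_\gamma$ with $\gamma \notin R_k$ but $\tau(\gamma) \in R_k$ — these are the bulleted positions together with $T_{\overline k}$ in Figure \ref{labelG}. The object $T_{\overline k}$ corresponds to the projective $P(\overline k)$, so $M_{\overline k} = 0$ is not a non-projective syzygy and is excluded. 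For each bulleted $X_\gamma$, I would locate its position relative to $T_{\overline k}$ and the nearby summands of $T$ in the AR quiver, and observe — exactly as in the proofs of Lemma \ref{lema2y3} and Lemma \ref{lema02} — that $\Hom_\Cc(\tau^{-1}T_{\overline k}, X_\gamma) \ne 0$ (since $\tau(\gamma) \in R_k$ means $\tau(\gamma)$ crosses $\overline k$, equivalently $\gamma$ crosses $\tau^{-1}(\overline k)$), while the region containing $X_\gamma$ contains no summands of $\tau T$, so a non-zero such morphism cannot factor through $\add \tau T$. By Remark \ref{rema util}(a) this gives $M_\gamma \notin \CMP(\B)$, so these $\gamma$ cannot give non-projective syzygies either, and they are vacuously consistent with the claim.

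The main obstacle I anticipate is step (2): pinning down precisely which bulleted positions occur and checking in each configuration (the analysis may split according to whether the triangulation is of type I, II, or III, matching cases (a), (b), (c) in the definition of $\overline k$) that the relevant $\Hom$-space over $\Cc$ is non-zero and genuinely fails to factor through $\tau T$. This is essentially the same AR-quiver bookkeeping carried out in Lemmas \ref{lema2y3} and \ref{lema02}, so I would lean heavily on those arguments and on the identification $X_a = \tau T_a = T_a[1]$ together with the translation "$\delta$ crosses $a \in \Tt$" $\iff$ "$\Ext_\Cc(X_a, X_\delta) \ne 0$" $\iff$ "$\Hom_\Cc(T_a, X_\delta) \ne 0$" recalled before subsection \ref{typeI}. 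A secondary subtlety is making sure the reduction in step (1) is legitimate — i.e.\ that a syzygy whose associated arc lies in $R_k$ really does arise from the unpunctured mechanism and not from some interaction with the puncture across $\overline k$ — but this follows because the minimal projective presentation of such an $M_\gamma$ only involves projectives $P(a)$ with $a$ crossing $\gamma$, hence $a \in R_k \cup \{\overline k\}$, keeping the whole computation inside the unpunctured piece.
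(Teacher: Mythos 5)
Your step (2) is where the proposal breaks down. For the objects $X_\gamma\in\Xx_k$ with $\gamma\notin R_k$ and $\tau(\gamma)\in R_k$ (the bulleted positions in Figure \ref{labelG}), you claim $\Hom_\Cc(\tau^{-1}T_{\overline k},X_\gamma)\neq 0$ and conclude that these modules can never lie in $\CMP(\B)$, so that this part of the statement is vacuous. Both the computation and the conclusion are false. By the dictionary you yourself quote, $\Hom_\Cc(\tau^{-1}T_{\overline k},X_\gamma)\cong\Hom_\Cc(T_{\overline k},X_{\tau(\gamma)})$, which is nonzero exactly when $\tau(\gamma)$ crosses $\overline k$; but for a bulleted position $\tau(\gamma)$ lies inside $R_k$ and hence does not cross $\overline k$, so this Hom-space vanishes (your parenthetical ``$\tau(\gamma)\in R_k$ means $\tau(\gamma)$ crosses $\overline k$'' reverses the logic). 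More importantly, such $M_\gamma$ genuinely can be non-projective syzygies: if the triangle of $\Tt$ on the $R_k$-side of $\overline k$ is internal, the arc obtained from one of its other sides by the counterclockwise elementary move crosses $\overline k$, so it is not contained in $R_k$, yet the corresponding module is a radical summand of $P(\overline k)$ and lies in $\underline{\CMP}_\Delta\subset\ind\,\underline{\CMP}(\B)$. So this portion of $\Mm_k$ cannot be discarded; it must be shown to land in $\underline{\CMP}_\Delta$. The paper does this by a dual argument: since $M_\gamma\in\ind\,\underline{\CMP}(\B)$, Remark \ref{rema11} gives that $\tau M_\gamma=M_{\tau(\gamma)}$ is a non-injective co-syzygy whose arc does lie in $R_k$; running the case-(i) reduction with injectives (cutting along $\overline k$, gluing the extra triangle to form the polygon $\Pi_k$ and passing to the type $\mathbb A$ algebra $\widetilde\B$) shows $M_{\tau(\gamma)}$ is a summand of $I(a)/S(a)$ for $a$ a side of an internal triangle, whence $M_\gamma=\tau^{-1}M_{\tau(\gamma)}$ is a radical summand of $P(a)$, i.e.\ $M_\gamma\in\underline{\CMP}_\Delta$.

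A secondary weakness is in step (1): the property you must transport to the unpunctured piece is that $M_\gamma$ embeds into a projective module (that is what being a syzygy means), not that its minimal projective presentation stays inside $R_k$; the projective cover only records the top of $M_\gamma$ and is not the issue. The paper's case (i) addresses exactly this: an indecomposable summand $P(x)$ of a projective containing $M_\gamma$ may correspond to an arc leaving $R_k$, and one first replaces it by its maximal submodule supported in $R_k$ (a summand of $\rad P(\overline k)$), then by $P(\overline k)$, before cutting along $\overline k$ and invoking the type-$\mathbb A$/gentle description of syzygies from section \ref{sect 4}. With that repair your step (1) agrees with the paper's case (i); but step (2) needs the co-syzygy argument above, not an exclusion.
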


\begin{proof}
Case(i): Let $M_\gamma$ be an indecomposable non-projective syzygy such that $\gamma$ lies entirely in $R_k$ for some $\overline{k}=\gamma(q,s)$. Since $\overline{k} \in \Tt$, the only arcs $x\in \Tt$ such that the projective module $P(x)=M_{\tau^{-1}(x)}$ is supported in the vertices corresponding to arcs in $R_k$ are those that lie entirely in $R_k$ and those starting at $s$. Since $M_\gamma$ is a syzygy, there is a monomorphism $u \colon M_\gamma \raw P= \oplus_{i=1}^n P(x_i)$ such that each arc $x_i$ lies on $R_k$ or has $s$ as an endpoint. Suppose that an indecomposable summand $P(x)$ of $P$ is associated to an arc that does not lie in $R_k$. Let $N$ be the maximal submodule of $P(x)$ such that its corresponding arc $\delta_N$ lies in $R_k$, then $N$ is a summand of rad$P(\overline{k})$. The morphism $u$ factors through $(P/P(x))\oplus N$ and $M_\gamma$ can be considered a submodule of $(P/P(x))\oplus P(\overline{k})$ via $M_\gamma \hookrightarrow (P/P(x))\oplus N \hookrightarrow (P/P(x))\oplus P(\overline{k})$.
So, without loss of generality, we can assume that  all the arcs $x_i$ lie in $R_k$. Now, define the triangulated polygon $\Pi_k$ by gluing to $R_k$, along $\overline{k}$, a triangle with two boundary segments as the other sides. Thus, $\Pi_k$ has a triangulation $\widetilde{\Tt}$ that is formed by a triangulated region $\widetilde{R_k}$, analogous to $R_k $, and an extra triangle, see Figure \ref{label4}. For each arc $j$ in $R_k$, denote by $\tilde{j}$ the analogous arc in $\widetilde{\Tt}$. Denote by $\widetilde{\B}$ the cluster-tilted algebra of type $\mathbb{A}$ arising form the triangulation $\widetilde{\Tt}$, \cite{ABCP,CCS}. Since $M_\gamma$ is not projective over $\B$, $\gamma$ is not of the form $\tau^{-1}(a)$ for an arc $a \in \Tt$. This implies that $\tilde{\gamma}$ is not $\tau^{-1} (\tilde{a})$ for an arc in $\widetilde{\Tt}$. Then, $M_{\widetilde{\gamma}}$ is not projective over $\widetilde{\B}$. Define the induced morphism of representations $\widetilde{u} =(u_{\tilde{j}}): M_{\widetilde{\gamma}} \hookrightarrow \oplus_{i=1}^n P(\widetilde{x_i})$ by $u_{\tilde{j}}=u_j$, for all $\tilde{j} \in \widetilde{\Tt}$. By construction, $\widetilde{u}$ is a monomorphism in $\mod \widetilde{\B}$. Then, $M_{\widetilde{\gamma}}$ is an indecomposable non-projective syzygy and, by Section 4.1, it has to be a summand of the radical of an indecomposable projective module $P({\widetilde{a}})$, where $\widetilde{a}$ is a side of an internal triangle $\widetilde{\Delta}$. Lifting the arc $\widetilde{\gamma}$ to $\Tt$, we obtain that $M_{\gamma}$ is a summand of the radical of a projective module $P(a)$, where $a$ is a side of the lifted internal triangle $\Delta$ in $R_k$. Thus $M_\gamma\in\underline{\CMP}_\Delta$.
\begin{figure}
\centering
\def\svgwidth{3.5in}
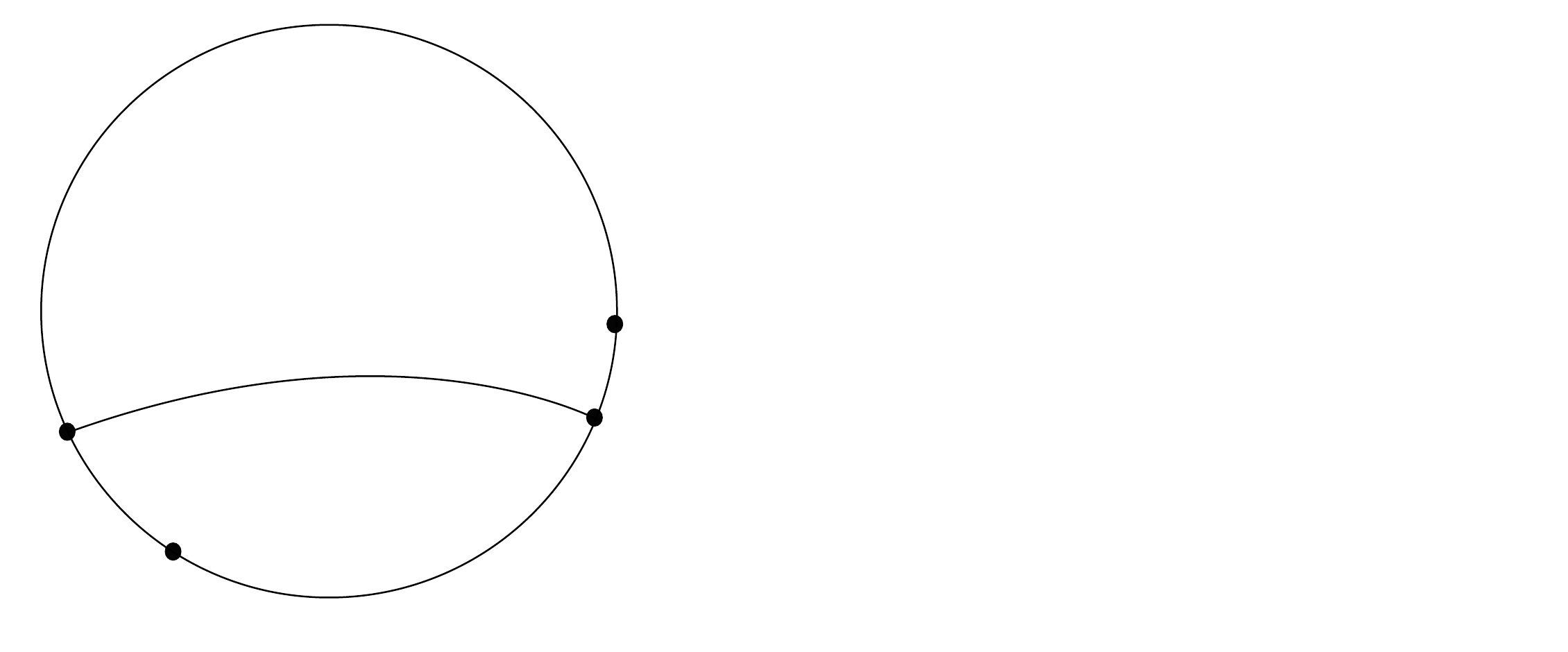
\caption{Original punctured disc $D$ and polygon $\Pi_k$.}
\label{label4}
\end{figure}

Case(ii): Now, consider an arc $\gamma$ that starts at $q$, crosses $\overline{k}$, continues in $\tau$ direction and ends at a vertex in $R_k$, and let $M_\gamma$ be the associated module. By Remark \ref{rema11}, since $M_\gamma \in \ind\,\underline{\CMP}(\B)$ we have $\tau M_\gamma \in\ind\, \underline{\CMI}(\B)$ and $\tau M_\gamma =M_{\tau(\gamma)}$ is a non-injective co-syzygy in $\mod \B$. Observe that $\tau(\gamma)$ lies in $R_k$. In particular $\tau(\gamma) \neq \overline{k}$ because $M_\gamma$ is not projective. Applying the same ideas as in case (i), using injective modules and taking $M_{\tau(\gamma)}$ as a co-syzygy, we find an epimorphism $h=(h_j)_{j\in Q}: \oplus_{i=1}^{l}I(x_i) \raw M_{\tau(\gamma)}$ such that each arc $x_i$ lies in $R_k$. Again, $h_j \neq 0$ only for $j$ in $R_k$, and now $h_j$ is a surjective linear map. Considering the induced morphism $\widetilde{h}$ in $\mod \widetilde{\B}$, we have that $M_{\widetilde{\tau(\gamma)}}$ is an indecomposable co-syzygy over $\mod \widetilde{\B}$. Then, applying Remark \ref{rema11} to the geometric interpretation in Section 4.1, we conclude that $M_{\widetilde{\tau(\gamma)}}$ is a summand of $I(\widetilde{a})/S(\widetilde{a})$, where $\widetilde{a}$ is a side of an internal triangle $\widetilde{\Delta}$ in $\Pi_k$. Lifting the arc $\widetilde{\tau(\gamma)}$ to $D$, we obtain that $M_{\tau(\gamma)}$ is a summand of $I(a)/S(a)$, where $a$, the lift of $\widetilde{a}$, is a side of the internal triangle $\Delta$. Then $\tau^{-1}M_{\tau(\gamma)}= M_\gamma$ is a summand of the radical of $P(a)$.
\end{proof}

\subsubsection{The main result}
We are now ready for the full description of $\underline{\CMP}(\B)$. The following result, together with the morphisms description in the next subsection, gives a complete description of the category $\underline{\CMP}(\B)$ in geometric terms.
\begin{teo}\label{teofull} Let $\B$ be a cluster-tilted algebra of type $\mathbb{D}$ and let $\mathcal{T}$ be the corresponding triangulation of the punctured disk. 
Let $t$ be the number of internal triangles which do not have the puncture as a vertex. Then, the indecomposable modules in $\underline{\CMP}(\B)$ are precisely
\begin{enumerate}
\item $\underline{\CMP}_\odot \cup \underline{\CMP}_\Delta$ if $\Tt$ is of type I,
\item $ \underline{\CMP}_\Delta$ if $\Tt$ is of type II,
\item $\underline{\CMP}_\clubsuit \cup \underline{\CMP}_\Delta$ if $\Tt$ is of type III.
\end{enumerate}
In particular, the number of indecomposable modules in $\underline{\CMP}(\B)$ is

\[ \begin{array}{cl} (m+d)(m+d-2)+3t &\textup{if $\Tt$ is of type I,}\\
3t &\textup{if $\Tt$ is of type II},\\
3(t+1)&\textup{if $\Tt$ is of type III.}
\end{array}\]

\end{teo}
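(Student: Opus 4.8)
The plan is to prove both inclusions in each of the three descriptions and then read off the cardinalities. For the inclusion ``$\supseteq$'' nothing new is needed: Theorem~\ref{unlema} gives $\underline{\CMP}_\odot\subseteq\ind\,\underline{\CMP}(\B)$ in type I; the discussion preceding Lemma~\ref{lemadelta}, together with Section~\ref{sect 4}, gives $\underline{\CMP}_\Delta\subseteq\ind\,\underline{\CMP}(\B)$ in all three types, since each of its members is a radical summand of a projective at a side of an internal triangle $\delta\notin\Ee$; and the paragraph following Lemma~\ref{lema2y3} gives $\underline{\CMP}_\clubsuit\subseteq\ind\,\underline{\CMP}(\B)$ in type III. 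I would then check that in each type the sets appearing in the union are pairwise disjoint — the arcs defining $\underline{\CMP}_\Delta$ run through the interior of an internal triangle \emph{not} incident to the puncture, whereas those defining $\underline{\CMP}_\odot$ and $\underline{\CMP}_\clubsuit$ wind around the puncture — and that the arcs within each set are pairwise distinct, hence the corresponding modules pairwise non-isomorphic (this is essentially contained in the proof of Theorem~\ref{unlema} for $\underline{\CMP}_\odot$, and is routine for the other two). Counting then gives $|\underline{\CMP}_\odot|=(m+d)(m+d-2)$, since there are $m+d$ choices of $i$ and $(m+d)-2$ admissible values $j\in\{i+2,\dots,i-1\}$, together with $|\underline{\CMP}_\Delta|=3t$ and $|\underline{\CMP}_\clubsuit|=3$, which yields the three numbers in the statement.

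For the inclusion ``$\subseteq$'', let $M_\gamma$ be an indecomposable non-projective syzygy; by Theorem~\ref{teo} we have $M_\gamma\in\ind\,\underline{\CMP}(\B)$, and every member of $\ind\,\underline{\CMP}(\B)$ arises this way, so it suffices to place every such $M_\gamma$ in the appropriate union. Let $\overline{k}=\gamma(q,s)$ range over the ``special'' arcs surrounding the puncture — the side opposite the puncture of each internal triangle in $\Ee$ for type I, the arc $\overline{a}$ of Figure~\ref{2 y 3} for type II, the arcs $\overline{c},\overline{d}$ for type III — and let $R_k$ be the component of the disc cut along $\overline{k}$ that avoids the puncture. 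If $X_\gamma\in\Xx_k$ for some such $k$, that is, $M_\gamma\in\Mm_k$, then Lemma~\ref{lemadelta} places $M_\gamma$ in $\underline{\CMP}_\Delta$ and we are done. So the remaining task is to treat arcs $\gamma$ that are not confined to, and do not become confined under $\tau$ to, any region $R_k$ — geometrically, the arcs that genuinely interact with the punctured region.

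For types II and III this residual case is handled by Lemma~\ref{lema2y3}, which forces the AR-quiver position of $M_\gamma$ into the finitely many slots marked $\diamond$ (type II) or $\clubsuit,\ast$ (type III). One then argues that the $\diamond$-positions in type II and the $\ast$-positions in type III carry no further non-projective syzygies: each such position is either projective, or carries a module of projective dimension $1$ and so is excluded by Remark~\ref{rema util}(b), or already lies in some $\Mm_k$; while the $\clubsuit$-positions in type III are exactly the members of $\underline{\CMP}_\clubsuit$. This gives $\ind\,\underline{\CMP}(\B)=\underline{\CMP}_\Delta$ in type II and $\ind\,\underline{\CMP}(\B)=\underline{\CMP}_\clubsuit\cup\underline{\CMP}_\Delta$ in type III. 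For type I, the residual arcs are those winding around the puncture. If $\gamma$ is incident to the puncture, or is of the form $\gamma(r_i,b_i)$ or $\gamma(r_i,b_{i+1})$, then Lemma~\ref{remaii} shows $M_\gamma$ is projective or already one of the modules appearing in $\underline{\CMP}_\odot$. Otherwise, using the geometric action of $\Omega$ from Theorem~\ref{unlema} one reduces (possibly after applying $\Omega$ once) to the case where $\gamma$ crosses some $\overline{k}$ together with $\tau^{-1}(\overline{k})$; then Lemma~\ref{lema02} forces $\gamma=\gamma(\tau^{-1}(s),q)$, and unwinding the labelling of colored points from Definitions~\ref{defcolored}--\ref{defredblue} identifies this arc as one of the $\gamma(r_{i'},b_{j'})$ with $j'\in\{i'+2,\dots,i'-1\}$, so that $M_\gamma\in\underline{\CMP}_\odot$.

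The main obstacle is the exhaustiveness underlying the last paragraph: one must rule out that some arc with an unexpected crossing pattern around the puncture — winding more than once, or crossing the fan arcs $1,\dots,m$ in an irregular way — produces a non-projective syzygy outside $\underline{\CMP}_\odot\cup\underline{\CMP}_\Delta$. Establishing this requires a systematic elimination in the AR quiver of the cluster category, repeatedly invoking Remark~\ref{rema util}(a): for each candidate position of $X_\gamma$ one produces a vertex $t$ and a nonzero morphism in $\Hom_\Cc(\tau^{-1}T_t,X_\gamma)$ that does not factor through $\add\tau T$, exactly as in the proofs of Lemmas~\ref{lema2y3} and \ref{lema02}. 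Once every position outside the claimed sets has been eliminated, the description of $\ind\,\underline{\CMP}(\B)$ is complete in all three types, and the cardinality formulas follow from the count carried out in the first paragraph.
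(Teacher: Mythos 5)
Your reverse inclusion, disjointness remarks and the counting are fine and agree with what the paper has already established before the theorem, and your treatment of types II and III follows essentially the paper's route (split according to whether $X_\gamma\in\Xx_k$, use Lemma~\ref{lemadelta} in the first case and Lemma~\ref{lema2y3} plus Remark~\ref{rema util}(b) in the second). The problem is the forward inclusion in type I, which is the real content of the theorem, and there your argument has a genuine gap. The paper proves it by an exhaustive case analysis on the \emph{colors of the endpoints} of $\gamma=\gamma(q,s)$: if $q$ (resp.\ $s$) is uncolored or monochromatic one either lands in some $\Mm_k$ (hence in $\underline{\CMP}_\Delta$ by Lemma~\ref{lemadelta}) or one produces an explicit nonzero map from some $\tau^{-1}P(t)$ — via a string-module argument or via Lemma~\ref{lema02} and the AR-quiver picture — contradicting Remark~\ref{rema util}(a); if both endpoints are colored, the degenerate label pairs $(r_i,b_{i+1})$ and $(r_i,b_i)$ are disposed of by Lemma~\ref{remaii}, and the remaining pairs are by definition in $\underline{\CMP}_\odot$. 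You replace this case analysis by the claim that ``using the geometric action of $\Omega$ from Theorem~\ref{unlema} one reduces (possibly after applying $\Omega$ once) to the case where $\gamma$ crosses some $\overline{k}$ together with $\tau^{-1}(\overline{k})$.'' This step is unjustified and in fact circular: Theorem~\ref{unlema} only describes $\Omega$ on modules already known to lie in $\underline{\CMP}_\odot$, which is precisely the membership you are trying to decide, and no argument is given that an arbitrary syzygy $M_\gamma$ outside all $\Mm_k$ must cross a pair $\overline{k},\tau^{-1}(\overline{k})$. You then acknowledge that the exhaustive elimination ``requires a systematic elimination in the AR quiver \dots repeatedly invoking Remark~\ref{rema util}(a)'' but do not carry it out; that elimination (cases (i)--(iv) in the paper's proof, including the explicit submodule $\tau^{-1}P(k+2)\subset M_\gamma$ when an endpoint is blue but not red, and the Figure~\ref{label1} argument when an endpoint is red but not blue) is exactly the missing content.

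A secondary inaccuracy: you assert that for $\gamma$ of the form $\gamma(r_i,b_i)$ Lemma~\ref{remaii} shows $M_\gamma$ is ``projective or already one of the modules appearing in $\underline{\CMP}_\odot$.'' That is not what the lemma gives: when $M(r_i,b_i)$ is not projective it equals $M_{\gamma(q,\tau^{-1}(s))}$ for an arc $\overline{k}=\gamma(q,s)$, and the paper places this module in $\underline{\CMP}_\Delta$; it cannot lie in $\underline{\CMP}_\odot$, whose index set excludes $j=i$ and $j=i+1$. This does not affect the truth of the statement being proved, but it shows the degenerate cases were not actually checked.
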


\begin{proof}
 (1) Let $\mathcal{T}$ be of type I, and let $\gamma=\gamma(q,s)\notin \mathcal{T}$ be an arc such that $M_\gamma\in\underline{\CMP}(\B)$ is not zero.
 Suppose that $M_\gamma\notin\underline{\CMP}_\odot$. Then, according to the definition of $\underline{\CMP}_\odot$, there are 4 possible cases.
 \begin{itemize}
\item [(i)] The point $q$ is uncolored or  $q$ is blue but not red.
\item [(ii)] The point $s$ is uncolored or $s$ is red but not blue.
\item [(iii)] The point $q$ is labeled $r_i$ and the point $s$ is labeled $b_{i+1}$. 
\item [(iv)]  The point $q$ is labeled $r_i$ and the point $s$ is labeled $b_{i}$. 
\end{itemize}
  (i) Suppose first that $q$ is  blue but not red. By condition (r) of Definition \ref{defredblue}, the triangulation $\mathcal{T}$ contains an arc $a$ from $\tau(q)$ to the puncture and an arc $b$ from $\tau^2(q)$ to the puncture. If $q$ is equal to $s$, then either $M_\gamma =P(a)$ if $\gamma $ is notched, or $M_\gamma = \tau^{-1} P(b)$ if $\gamma $ is plain. In the first case, $M_\gamma$  is zero in $\underline{\CMP}(B)$, because it is projective, and in the second case, $M_\gamma\notin \CMP(B)$, by Remark \ref{rema util}(a). This is a contradiction, thus $q$ must be different from $s$.  Again by condition (r) of Definition \ref{defredblue}, the point $\tau (q)$ is a last vertex in a last triangle $[k,\overline{k},k+1]$ in a set $\Ee_i$ and the arc $\gamma(\tau^2(q),\tau^2(q))=k+2$ belongs to $\Tt$. The module $M_\gamma$ is defined by a string $\beta_{k } \law \overline{k} \law k+1 \raw w$, where $w$ is a walk. The string module $\tau^{-1} P(k+2)= M(\beta_{k } \law \overline{k}) $ is a submodule of $M_\gamma$. Therefore there is a non zero morphism in $\Hom_\B (\tau^{-1} P(k+2),M_\gamma)$, and by Remark \ref{rema util}(a), $M_\gamma$ is not in $  \CMP (\B)$, a contradiction.
  
  Now suppose that $q$ is uncolored. By Definition \ref{defcolored}, this means that neither $q$ nor $\tau(q)$ is incident to the puncture in $\mathcal{T}$. Since $\mathcal{T} $ is a triangulation, this implies that the arc from $q$ to the puncture and the arc from $\tau(q)$ to the puncture crosses an arc in $\mathcal{T}$. This implies that there exists an arc $\overline{k}=\gamma(x,y)\in\mathcal{T}$ that is not incident to the puncture such that $q, \tau(q)\in R_k\setminus\{x,y\}$.
  If $\gamma$ or $\tau(\gamma)$ lies entirely inside $R_k$ then $M_\gamma\in\Mm_k$ and Lemma \ref{lemadelta} implies that $M_\gamma\in\underline{\CMP}_\Delta$.  
On the other hand, if both $\gamma$ and $\tau(\gamma)$ do not lie in $R_k$ then both $\gamma $ and $\tau(\gamma)$ cross $\overline{k}=\gamma(x,y)$ and $q,\tau(q)\notin\{x,y\}$. In other words, $\gamma $ crosses both $\overline{k}$ and $\tau^{-1}(\overline{k})$ and $q\notin\{x,y,\tau^{-1}(x),\tau^{-1}(y)\}$. Then Lemma~\ref{lema02} implies that $M_\gamma$ is not in $  \CMP (\B)$, a contradiction.

(ii) Suppose first that $s$ is red but not blue. By condition (b) of Definition \ref{defredblue}, the triangulation $\mathcal{T}$ contains an arc $a$ from $s$ to the puncture and an arc $b$ from $\tau^{-1}(s)$ to the puncture. If $q$ is equal to $s$, then $\gamma $ is an arc from $q$ to the puncture, and since $\gamma$ is not in the triangulation, we see that $\gamma$ must be notched. 
 Then $M_\gamma=I(b)$. But since the projective dimension of $I(b) $ is 0 or 1, we see that $M_\gamma$ is projective or, by Remark \ref{rema util}(b),  $M_\gamma\notin \CMP$. In both cases $M_\gamma\notin \ind\,\underline{\CMP} (\B)$.
 This is   a contradiction, thus $q$ must be different from $s$.  Then, in the AR-quiver of the cluster category, the object $X_\gamma$ lies in one of the positions labeled by a $\ast$  in Figure \ref{label1}. From the relative position of $T_{\overline{k}}$ and the $\ast$, it follows that there is a nonzero morphism $f\in \Hom_\Cc(\tau^{-1} T_{\overline{k}}, X_\gamma)$. Moreover, because of the relative position of  $T_{\overline{k}}$, the shaded region $A_2$ do not contain any summands of $\tau T$, and  because of the relative position of  $T_{k}$, the shaded region $A_1$ do not contain any summands of $\tau T$. This implies that the morphism $f$ does not factor through $\tau T$, and therefore  $f$ induces a nonzero morphism in $\Hom_{\B}(\tau^{-1}P(\overline{k}), M_\gamma)$. Now Remark \ref{rema util}(a) implies that $M_\gamma$ is not in $ \CMP(\B)$, a contradiction.

Now suppose that $s$ is uncolored.  Replacing $q$ by $s$ in the argument  in the case (i) for $q$ uncolored, we see that if  $M_\gamma\notin\underline{\CMP}_\Delta$ then  there exists an arc $\overline{k}=\gamma(x,y)\in\mathcal{T}$ such that  $\gamma $ crosses both $\overline{k}$ and $\tau^{-1}(\overline{k})$ and $s\notin\{x,y,\tau^{-1}(x),\tau^{-1}(y)\}$. Then Lemma~\ref{lema02} implies that $\gamma=\gamma(q,s)=\gamma(\tau^{-1}(y),x)$ and thus $s=x$, a contradiction. 
%

(iii) Let $\gamma=\gamma(q,s)$ and suppose $q$ is labeled $r_i$ and $s$ is labeled $b_{i+1}$. 
Whenever $M_\gamma=M(r_i,b_{i+1})$, we conclude from Lemma~\ref{remaii}(a) that $M_\gamma$ is projective and thus $M_\gamma$  is zero in $\underline{\CMP}(\B)$, a contradiction.

If $\gamma $ is not incident to the puncture then $M_\gamma=M(r_i,b_{i+1})$, a contradiction. 

Suppose that $\gamma $ is incident to the puncture. If $\gamma $ is plain then  $M_\gamma=0$ if $\gamma\in\mathcal{T}$ and $M_\gamma=M(r_i,b_{i+1})$ otherwise. Both cases yield a contradiction. On the other hand, if $\gamma $ is notched, let $\gamma^\circ$ denote the plain arc homotopic to $\gamma$. Then if $\gamma^\circ\in\mathcal{T}$, we have $M_\gamma=M(r_i,b_{i+1})$, a contradiction. And if $\gamma^\circ\notin\mathcal{T}$, then $\tau(\gamma)$ is the plain arc from $\tau(q)$ to the puncture, and since $q$ is a colored point, we get $\tau(\gamma)\in\mathcal{T}$. This implies that $M_\gamma$ is projective and thus $M_\gamma$  is zero in $\underline{\CMP}(\B)$, a contradiction.

(iv) Now let $\gamma=\gamma(q,s)$ and suppose $q$ is labeled $r_i$ and $s$ is labeled $b_{i}$. 
Whenever $M_\gamma=M(r_i,b_{i})$, we conclude from Lemma~\ref{remaii}(a) that $ M_\gamma \in \underline{\CMP}_\Delta$ and we are done.

If $\gamma $ is not incident to the puncture then $M_\gamma=M(r_i,b_{i})$.

Suppose that $\gamma $ is incident to the puncture. Since $q=s$ is a red and blue point with label $r_ib_i$,  remark (3) following Definition~\ref{defredblue} implies that $q$ and $\tau(q)$  are incident to the puncture in $\mathcal{T}$. Thus if $\gamma$ is plain then $\gamma\in\mathcal{T}$ and if $\gamma$ is notched then $\tau(\gamma)\in \mathcal{T}$. In the first case $M_\gamma=0$ and in the second case $M_\gamma$ is projective, so in both cases we get a contradiction.
This completes the proof of statement (1).
\smallskip

(2) Let $\Tt$ be a triangulation of type II and assume that we are in the case studied in Figure \ref{figura2}. Let $M_\gamma$ be a non projective syzygy. If $X_\gamma \in \Xx_a$ then, by Lemma \ref{lemadelta}, $M_\gamma \in \underline{\CMP}_\Delta$. If $X_\gamma \notin \Xx_a$ then, by Lemma \ref{lema2y3}, $X_\gamma$ does not lie in the shaded area $A$ in Figure \ref{figura2}. Thus, $X_\gamma \in \{ \tau^{-1} T_b, T_b, \tau T_b, \tau^2 T_c, \tau T_c, T_c\}$ so the  module $M_\gamma $ is injective or projective, this is impossible by Remark \ref{rema util}(b).  The remaining two triangulations of type II can be analyzed in the same manner. 

(3) Let $\Tt$ be a triangulation of type III. Let $M_\gamma$ be a non-projective syzygy. If $X_\gamma \in \Xx_c \cup \Xx_d$ then, by Lemma \ref{lemadelta}, $M_\gamma \in \underline{\CMP}_\Delta$. Suppose that $X_\gamma \notin \Xx_c \cup \Xx_d$ then, by Lemma \ref{lema2y3}, $X_\gamma \notin A_1 \cup A_2 \cup A_3$, see Figure \ref{figura3}. The only positions for $X_\gamma$ under the assumption are those labeled $\clubsuit$ in said figure. The remaining objects correspond to arcs $a$ and $b$ in $\Tt$ or summands $T_a$ and $T_b$ of the cluster tilting object $T$, and the corresponding modules are  zero in $\underline{\CMP}(\B)$. Therefore $M_\gamma \in \underline{\CMP}_\clubsuit \cup \underline{\CMP}_\Delta$.
\end{proof}

\begin{figure}
\centering
\def\svgwidth{5.6in}
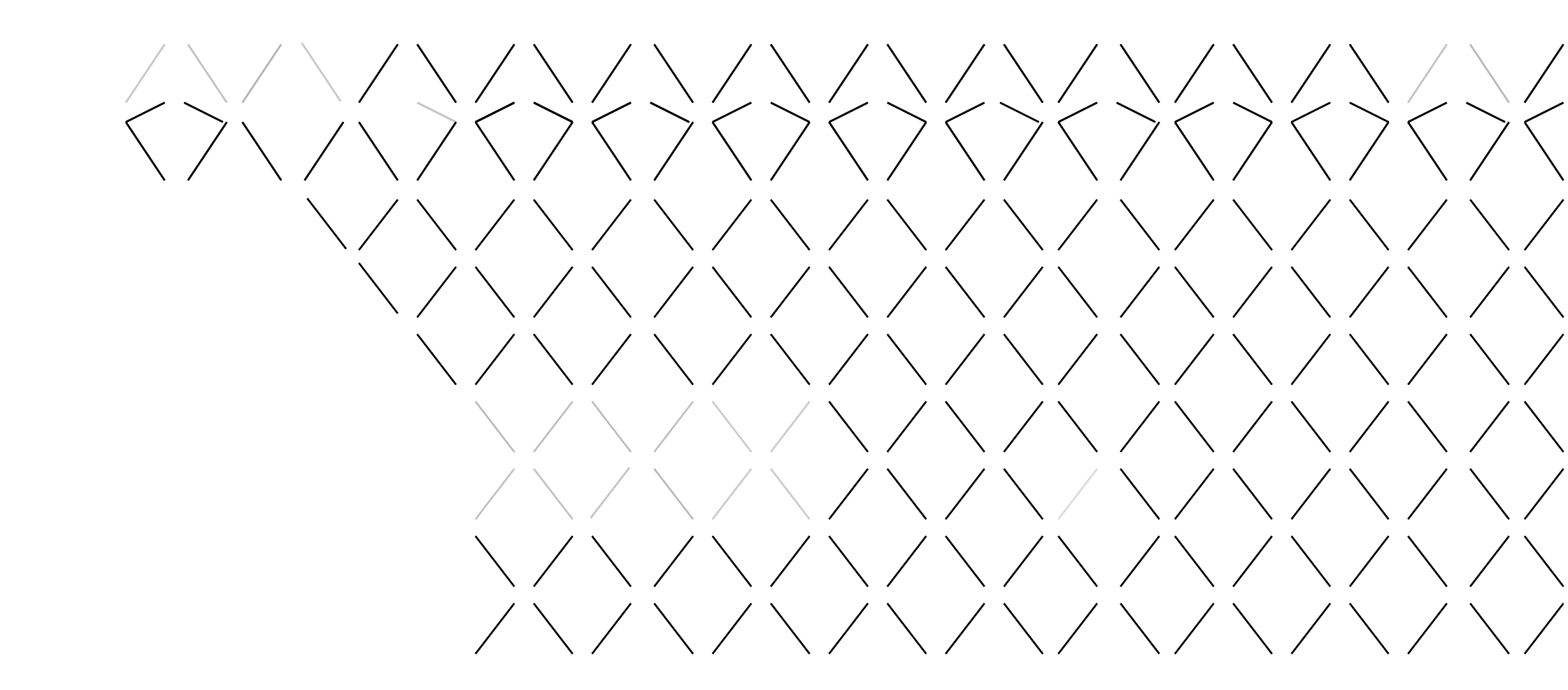
\caption{Proof of Theorem \ref{teofull}(1).}
\label{label1}
\end{figure}

\subsubsection{Auslander-Reiten quiver}
The $\underline{\CMP}$ category of cluster tilted-algebras of type $\mathbb{D}$ has been studied in an abstract way in \cite{CGL}. Our approach is different, we find the modules and their projective resolutions explicitly. Moreover, we can compute the AR quiver of the triangulated category $\underline{\CMP}$ in terms of (relative) elementary moves, analogous to the elementary moves defined in \cite{S}.
\\

According to \cite[Theorem 4.9]{CGL}, the stable category $\underline{\CMP}(\B)$ is equivalent to a union of stable categories $\underline{\mod}S$ where $S$ is a selfinjective cluster-tilted algebra of type $\mathbb{D}$ or $\mathbb{A}_3$. In these cases, the only stable categories having non-trivial morphisms are those arising from cluster-tilted algebras of type $\mathbb{D}_n$, where $n\geqslant 4$. In our setting, the objects $M \in\ind\, \underline{\CMP}(\B)$ such that $\Hom_{\underline{\CMP}}(M,-)$ is not trivial are the modules in $\underline{\CMP}_\odot$. Recall that $\underline{\CMP}_\odot = \{ M(r_i,b_j) \colon j\in \{ i+2,\ldots,i-1\} \}$ and the notation refers to color-labeled endpoints, red $r_i$ and blue $b_j$. %

\begin{df} A \emph{relative elementary move} is a composition of elementary moves $\gamma_0 \raw \gamma_1 \raw \ldots \raw\gamma_{l-1} \raw \gamma_l$ such that
\begin{enumerate}
\item  the arcs $\gamma_0$ and $\gamma_l$ are associated to modules in $\underline{\CMP}_\odot$,
\item the arcs $\gamma_{i}$, $i\in {1,\ldots,l-1}$ are not associated to modules in $\underline{\CMP}_\odot$ and they are are not arcs at the puncture,
\item one of the endpoints of $\gamma_0$ is fixed under all the elementary moves.

\end{enumerate}
\end{df}
In the language of cluster-tilting categories, a relative elementary move is a composition of irreducible morphisms in a sectional path of the AR quiver, such that only the first and the last object are associated to modules in $\underline{\CMP}_\odot$. There are two types of relative elementary moves, defined as follows and described in Figure \ref{relative}.

\begin{itemize}
\item \emph{Red elementary move:} Start with an arc $\gamma(r_i,b_j)$ associated to a module $M(r_i,b_j) \in \underline{\CMP}_\odot$ and make elementary moves in $\tau^{-1}$ direction, keeping the vertex labeled by $b_j$ fixed, until the arc $\gamma^*(r_{i-1},b_j)$ associated to the module $M(r_{i-1},b_j)\in \underline{\CMP}_\odot$ is reached.
\item \emph{Blue elementary move:} Start with an arc $\gamma^*(r_i,b_j)$ associated to a module $M(r_i,b_j) \in \underline{\CMP}_\odot$ and make elementary moves in $\tau^{-1}$ direction, keeping the vertex labeled by $r_i$ fixed, until the arc $\gamma(r_{i},b_{j-1})$ associated to the module $M(r_{i},b_{j-1})\in \underline{\CMP}_\odot$ is reached.
\end{itemize}

\begin{figure}
\centering
\def\svgwidth{5.7in}
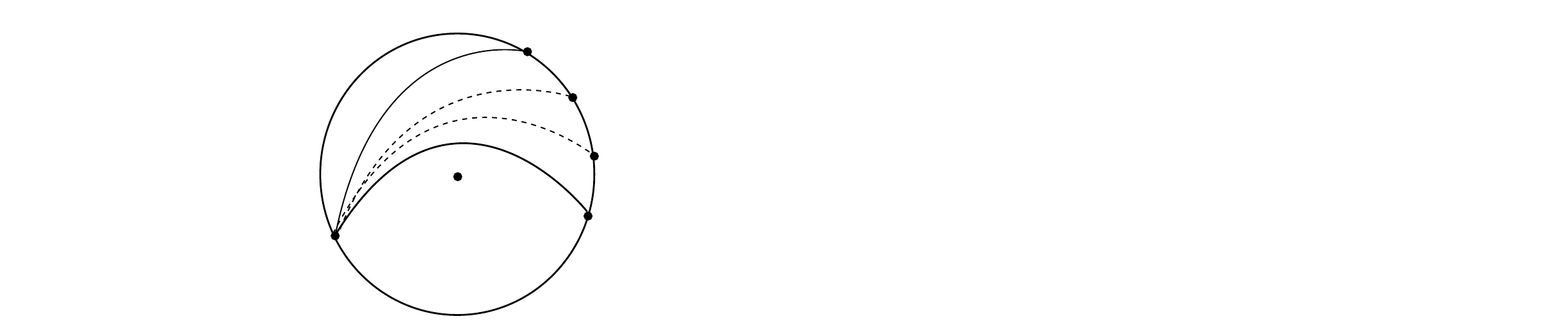
\caption{Red and blue elementary moves in $\underline{\CMP}_\odot$.}
\label{relative}
\end{figure}
The relative elementary moves represent  non-zero morphisms in $\mod \B$. Moreover, such morphisms are irreducible in the category $\underline{\CMP}_\odot$ and this fact leads to a nice description of the category in geometric terms.

\begin{prop} Let $\Tt$ be a triangulation of type I. The AR quiver of the category $\underline{\CMP}_\odot$ can be described as follows:
\begin{enumerate}
\item There is a bijection between relative elementary moves and irreducible morphisms.
\item The AR translation is given by $\widetilde{\tau} (\gamma(r_i,b_j))= \gamma(r_{i+1},b_{ j+1})$.
\end{enumerate}
\end{prop}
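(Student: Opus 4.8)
The plan is to deduce part (2) directly from Theorem~\ref{unlema} together with the Calabi--Yau property of $\underline{\CMP}(\B)$, and then to derive part (1) by identifying the almost split sequences of $\underline{\CMP}_\odot$ and matching their irreducible maps with the geometry.

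\emph{Part (2).} Since $\underline{\CMP}(\B)$ is a $3$-Calabi--Yau triangulated category (\cite{KR}) whose inverse shift is the syzygy $\Omega$, its Serre functor is $\Omega^{-3}$, hence its Auslander--Reiten translation equals the Serre functor composed with the inverse shift, that is $\Omega^{-3}\circ\Omega=\Omega^{-2}$. By \cite[Theorem 4.9]{CGL}, $\underline{\CMP}_\odot$ is a union of triangulated summands of $\underline{\CMP}(\B)$, namely those equivalent to stable module categories of selfinjective cluster-tilted algebras of type $\mathbb{D}_n$ with $n\ge 4$; by Theorem~\ref{unlema} it is closed under $\Omega$ and $\Omega^{-1}$, so it inherits the $3$-Calabi--Yau structure and $\widetilde{\tau}=\Omega^{-2}$ on $\underline{\CMP}_\odot$. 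Theorem~\ref{unlema} says $\Omega$ acts on index pairs by $(i,j)\mapsto(j-1,i)$, so $\Omega^2$ acts by $(i,j)\mapsto(i-1,j-1)$ and therefore $\widetilde{\tau}=\Omega^{-2}$ acts by $(i,j)\mapsto(i+1,j+1)$, which is exactly the asserted formula $\widetilde{\tau}(\gamma(r_i,b_j))=\gamma(r_{i+1},b_{j+1})$.

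\emph{Part (1).} Because $\underline{\CMP}_\odot\simeq\underline{\mod}\,S$ for a selfinjective algebra $S$, it has almost split sequences, so it suffices to determine, for each $M(r_i,b_j)$, the almost split sequence $0\to\widetilde{\tau}M(r_i,b_j)\to E\to M(r_i,b_j)\to 0$ and to check that its irreducible maps are precisely the relative elementary moves out of $\gamma(r_i,b_j)$. I would show that the middle term is $E=M(r_{i-1},b_j)\oplus M(r_i,b_{j-1})$, with a summand deleted precisely when its index pair leaves the admissible range $\{i'+2,\ldots,i'-1\}$; this boundary behaviour reproduces the wings of the stable Auslander--Reiten quiver of a selfinjective algebra of type $\mathbb{D}_n$. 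One then checks that the irreducible maps $M(r_i,b_j)\to M(r_{i-1},b_j)$ and $M(r_i,b_j)\to M(r_i,b_{j-1})$ are realized by the red, resp.\ blue, relative elementary move: starting from $\gamma(r_i,b_j)$ one performs the sectional sequence of ordinary elementary moves of the definition, and, using the explicit string descriptions of the $M(r_i,b_j)$ and of the intermediate arcs (as in the case analysis of the proof of Theorem~\ref{unlema}), one verifies that the composite is a nonzero morphism of $\mod\B$ whose image in $\underline{\CMP}_\odot$ does not factor through another object of $\underline{\CMP}_\odot$ -- the intermediate modules being either projective or members of $\underline{\CMP}_\Delta$, which lies in the other summands and so contributes nothing. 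Conversely the red and blue moves out of $\gamma(r_i,b_j)$ give linearly independent, hence distinct, arrows, and by the computation of $E$ above they exhaust all irreducible maps, yielding the bijection.

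\emph{Main obstacle.} The crux of part (1) is proving that the composite of elementary moves making up a relative elementary move is genuinely irreducible in $\underline{\CMP}_\odot$ -- that it is neither zero nor a composition of two non-isomorphisms inside $\underline{\CMP}_\odot$, and that, together with the $\widetilde{\tau}$-orbit, such composites produce the whole Auslander--Reiten quiver. This requires the same top/projective-cover/syzygy bookkeeping as in Theorem~\ref{unlema}, now applied also to the intermediate arcs, together with the input from \cite{CGL} that the summands disjoint from $\underline{\CMP}_\odot$ do not interact with it. The boundary positions $j=i+2$ and $j=i-1$, where the almost split sequence has a single indecomposable middle term, are an additional but routine case distinction.
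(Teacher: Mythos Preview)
Your reorganization --- proving (2) first via the 3-Calabi--Yau property and Theorem~\ref{unlema}, then using it for (1) --- is sound, and the paper in fact remarks on exactly this homological derivation of $\widetilde{\tau}=\Omega^{-2}$ immediately \emph{after} its proof, as a consistency check. The paper's own argument goes the other way: it establishes irreducibility of the relative elementary moves directly, then reads off (2) from the knitting algorithm.

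There is, however, a concrete error in your Part~(1). You claim the intermediate modules along a relative elementary move are ``either projective or members of $\underline{\CMP}_\Delta$''. Both alternatives are false. The paper shows precisely the opposite: for a red move, the intermediate arcs $\gamma_1,\ldots,\gamma_{l-1}$ all cross $\overline{k}$ (so $X_{\gamma_s}\notin\add T$, i.e.\ the modules are non-projective) and $\tau(\gamma_s)$ crosses $k+2$ or $k+1$ (so $X_{\gamma_s}\notin\add\tau T$, i.e.\ the modules are nonzero). These intermediate modules are in general not in $\CMP(\B)$ at all, let alone in $\underline{\CMP}_\Delta$. The point is not that they vanish in the stable category, but that the composite is a genuine nonzero morphism in $\underline{\mod}\B$ lying on a sectional path, and that by the definition of relative elementary move none of the intermediate modules belong to $\underline{\CMP}_\odot$. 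Irreducibility in $\underline{\CMP}_\odot$ then follows because, invoking the equivalence $\underline{\CMP}_\odot\simeq\underline{\mod}\,S$ from \cite{CGL}, one knows the shape of the AR quiver in advance and can count: each vertex is the source of at most two irreducible maps, and the red and blue moves supply exactly that many distinct nonzero morphisms, so they must be the irreducible ones.

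Your outline for (1) via determining the middle term of the almost split triangle is workable, but the verification that the red/blue composites realize its irreducible components cannot proceed through your proposed description of the intermediate modules; you need the sectional-path argument above instead.
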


\begin{proof}
The correspondence between arcs and indecomposable modules in the set $\underline{\CMP}_\odot$ was studied in Section \ref{typeI}. 
Let us prove that the relative elementary moves are associated to irreducible morphisms. We will study the case of red elementary moves, the prove for blue elementary moves is similar. Let $T$ be the cluster tilting object associated to the triangulation $\Tt$. Assume that the red elementary move is a composition of (at least) two elementary moves $\gamma(r_i,b_j) \raw \gamma_1 \raw \ldots \raw \gamma_{l-1}\raw \gamma(r_{i-1},b_j)$. According to Definition \ref{defredblue} this occurs in two cases:
\\
(1) The vertices labeled $r_{i-1}$ and $r_i$ lie in a last triangle $\Delta=[k,\overline{k},k+1]$ of a set $\Ee_t$. The situation is shown in the left picture in Figure \ref{red}, where the blue dotted line shows the allowed positions for the label $b_j$ with $j \in \{i+2, \ldots, i-1\}$. The objects $X_1, \ldots, X_{l-1}$ are not summands of $\tau T \oplus T$ since the arcs $\tau (\gamma_1), \ldots, \tau(\gamma_{l-1})$ cross the arc $k+2$, and the arcs $ \gamma_1, \ldots, \gamma_{l-1}$ cross the arc $\overline{k}$. 
\\
(2) The vertices labeled $r_{i-1}$ and $r_i$ lie in an internal triangle $\Delta=[k,\overline{k},k+1]$ of a set $\Ee_t$ and such triangle is not the last in $\tau$ order. The case is shown in the right picture in Figure \ref{red}, where the blue dotted line shows the allowed positions for the label $b_j$ with $j \in \{i+2, \ldots, i-1\}$. The objects $X_1, \ldots, X_{l-1}$ are not summands of $\tau T \oplus T$ since the arcs $\tau (\gamma_1), \ldots, \tau(\gamma_{l-1})$ cross the arc $k+1$, and the arcs $ \gamma_1, \ldots, \gamma_{l-1}$ cross the arc $\overline{k}$.  
\\
This implies that the irreducible modules appearing in the composition defined by the red elementary move are non-zero and non-projective. Thus, the composition is a non-zero morphism that is part of a sectional path in the AR quiver of $\underline{\mod}\B$. Also, only the first modules and the last module of the sectional path belong to $\underline{\CMP}_\odot$. It follows that the composition is an irreducible morphism in $\underline{\CMP}_\odot$. Finally, if the red elementary move is already a single elementary move, the corresponding morphism is irreducible in $\underline{\CMP}_\odot$ trivially.
\\
The equivalence of categories $\underline{\mod} S_{(m+d)(m+d-2)} \simeq \underline{\CMP}_\odot$ establishes the structure of the AR quiver. An arc $\gamma(r_i,b_j)$ is a source of one or two relative elementary moves. When $\gamma(r_i,b_j)$ is a source of two moves, that is when $j\notin \{ i+1,i+2\}$ and  $i\notin \{ j+1,j+2\}$, there is a red elementary move $\gamma(r_i,b_j)\raw M(r_{i-1}, b_j)$ and a blue elementary move $\gamma(r_i,b_{j}) \raw \gamma(r_i,b_{j-1})$. Analogously, the arc $\gamma(r_{i-1},b_{j-1})$ is a sink of two elementary moves $\gamma(r_{i-1},b_j) \raw \gamma(r_{i-1},b_{j-1})$ and $\gamma(r_i,b_{j-1})\raw \gamma(r_{i-1},b_{j-1})$. The knitting algorithm produced by the geometric moves generates the AR quiver and the translation is given by $\widetilde{\tau} (\gamma(r_{i},b_j))=\gamma(r_{i+1},b_{ j+1})$.  
\begin{figure}
\centering
\def\svgwidth{5in}
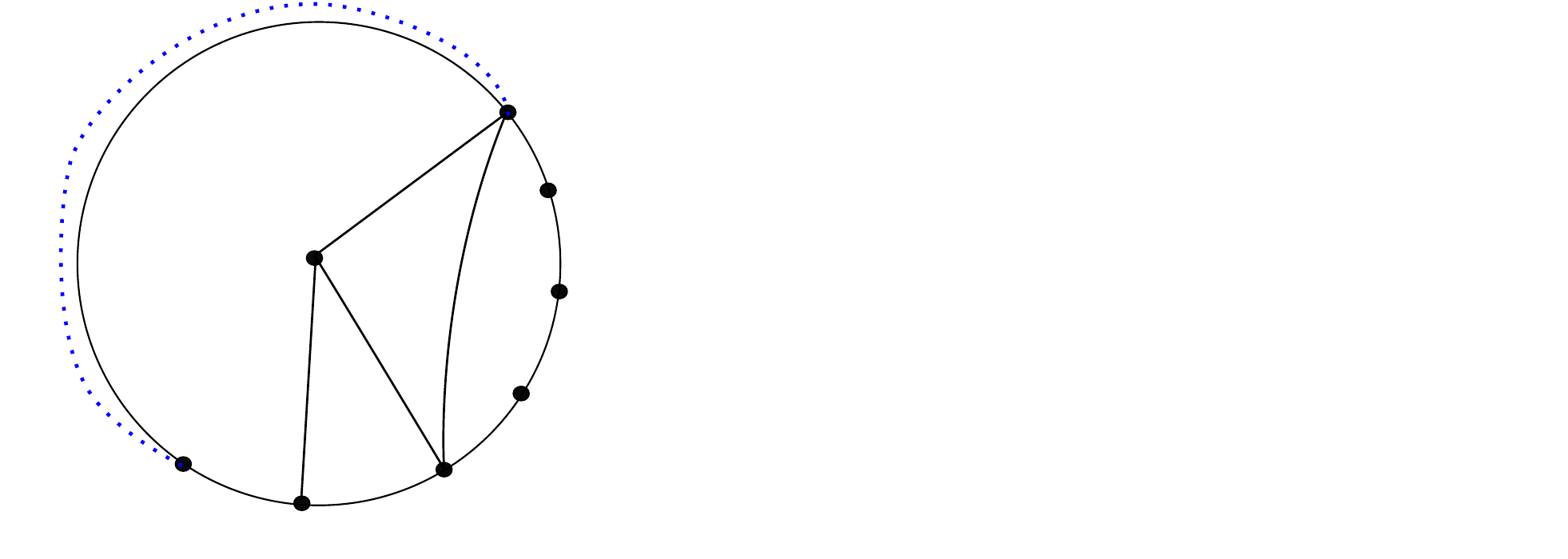
\caption{Possible red elementary moves}
\label{red}
\end{figure}
\end{proof}

The result is consistent with the  following homological argument. Recall that the category $\underline{\CMP}(\B)$ is triangulated and the shift $[1]$ is given by the formal inverse of the syzygy operator $(\Omega)^{-1}$. Also, from \cite[Proposition I.2.3]{RV}, is known that the category has Serre functor $\mathbb{S}$ which is related with the AR translation $\widetilde{\tau}$ via $\mathbb{S}=\widetilde{\tau}[1]$. According to  \cite[Section 3.3]{KR} the category $\underline{\CMP}(\B)$ is 3-CY, this means that $\mathbb{S}=(\Omega)^{-3}$. Combining both results, we have  $\mathbb{S} = (\Omega)^{-3} =\widetilde{\tau} (\Omega)^{-1}$, so $\widetilde{\tau} = (\Omega)^{-2}$. In particular, for each $M(r_i,b_j) \in \underline{\CMP}_\odot$ , we can use Theorem \ref{unlema} to obtain $\widetilde{\tau} M(r_{i},b_j)=M(r_{i+1},b_{ j+1})$. 

\begin{ex} Consider the algebra defined by the triangulation $\Tt$ of type I in the upper part of Figure \ref{arquiver}. The AR quiver of $\underline{\CMP}(\B)=\underline{\CMP}_\odot$ is given by the red and blue elementary moves below. Labeling the arcs in $\Tt$ and computing the intersection number is easy to reconstruct the associated module representations.

\begin{figure}
\centering
\def\svgwidth{6.4in}
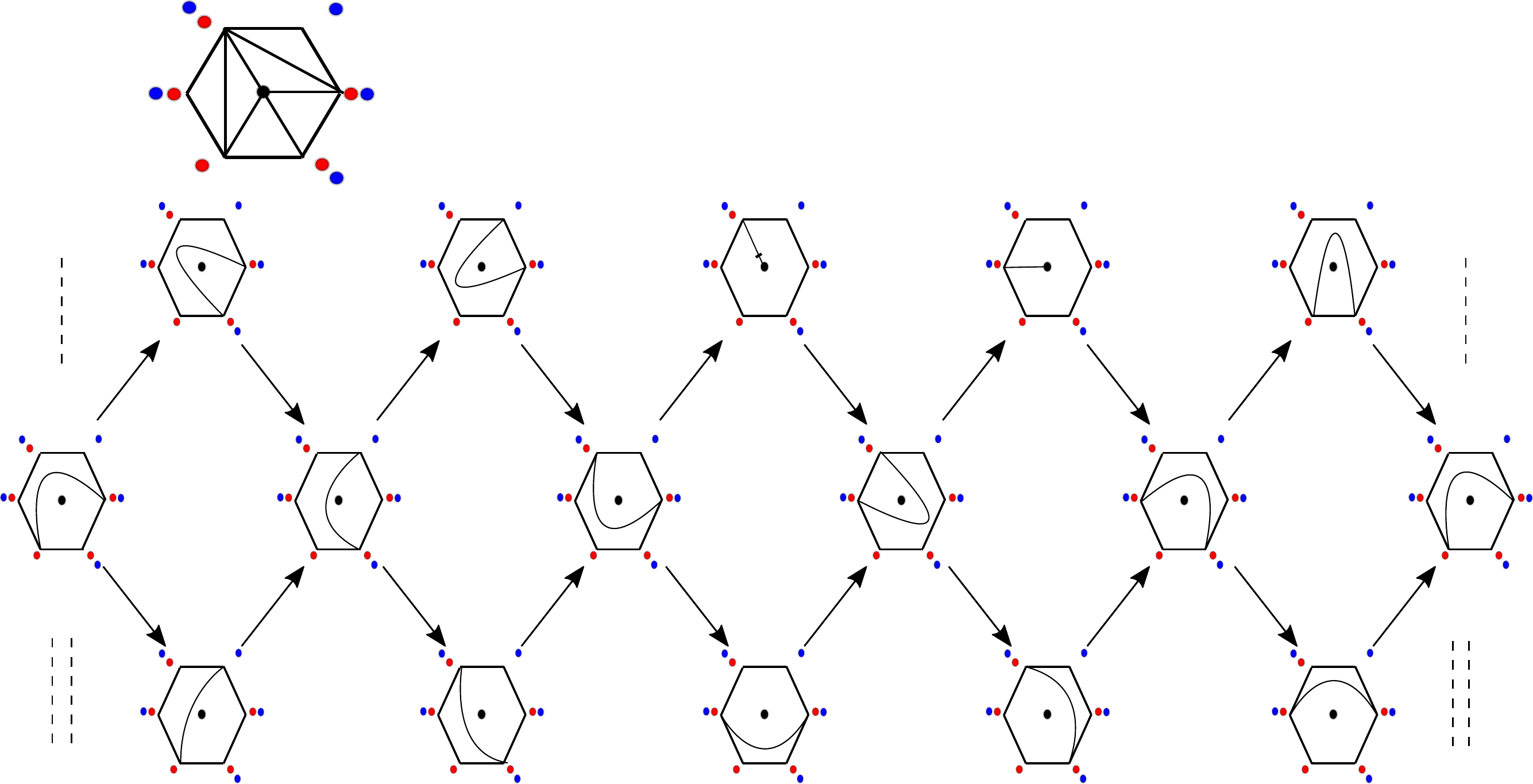
\caption{Geometric realization of the AR quiver of $\underline{\CMP}_\odot$.}
\label{arquiver}
\end{figure}

\end{ex}


{} 


\begin{figure}[b]
\centering
\def\svgwidth{4.8in}
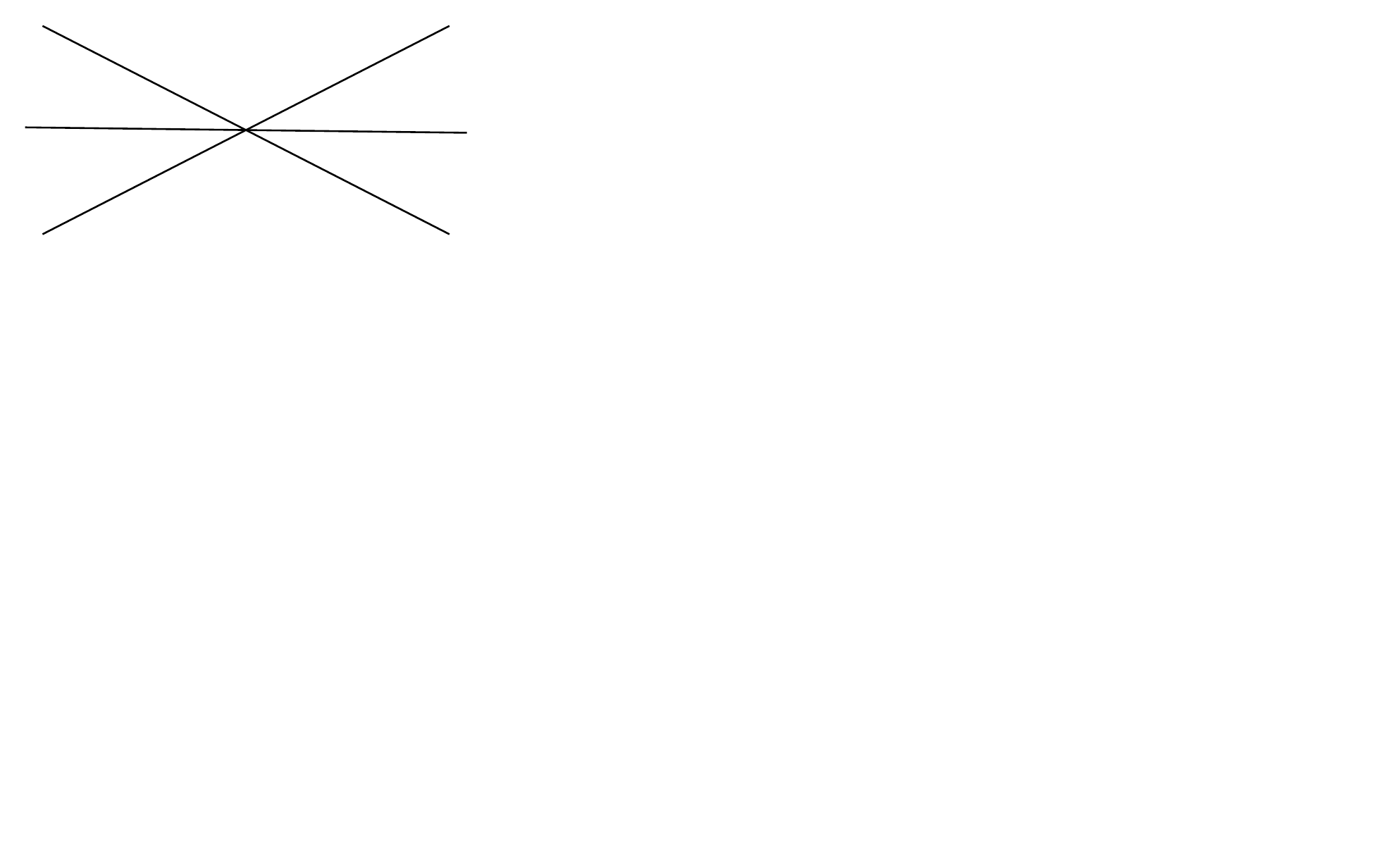
\caption{Proof of Theorem \ref{unlema}, cases (1a), (1b) and (1c). The left column shows part of the triangulation, the middle column, the module $M(r_i,b_j)$ and the right column shows $\Omega M(r_i,b_j)=M(r_{j-1},b_i)$.}
\label{figu1}
\end{figure}
\begin{figure}
\centering
\def\svgwidth{4.8in}
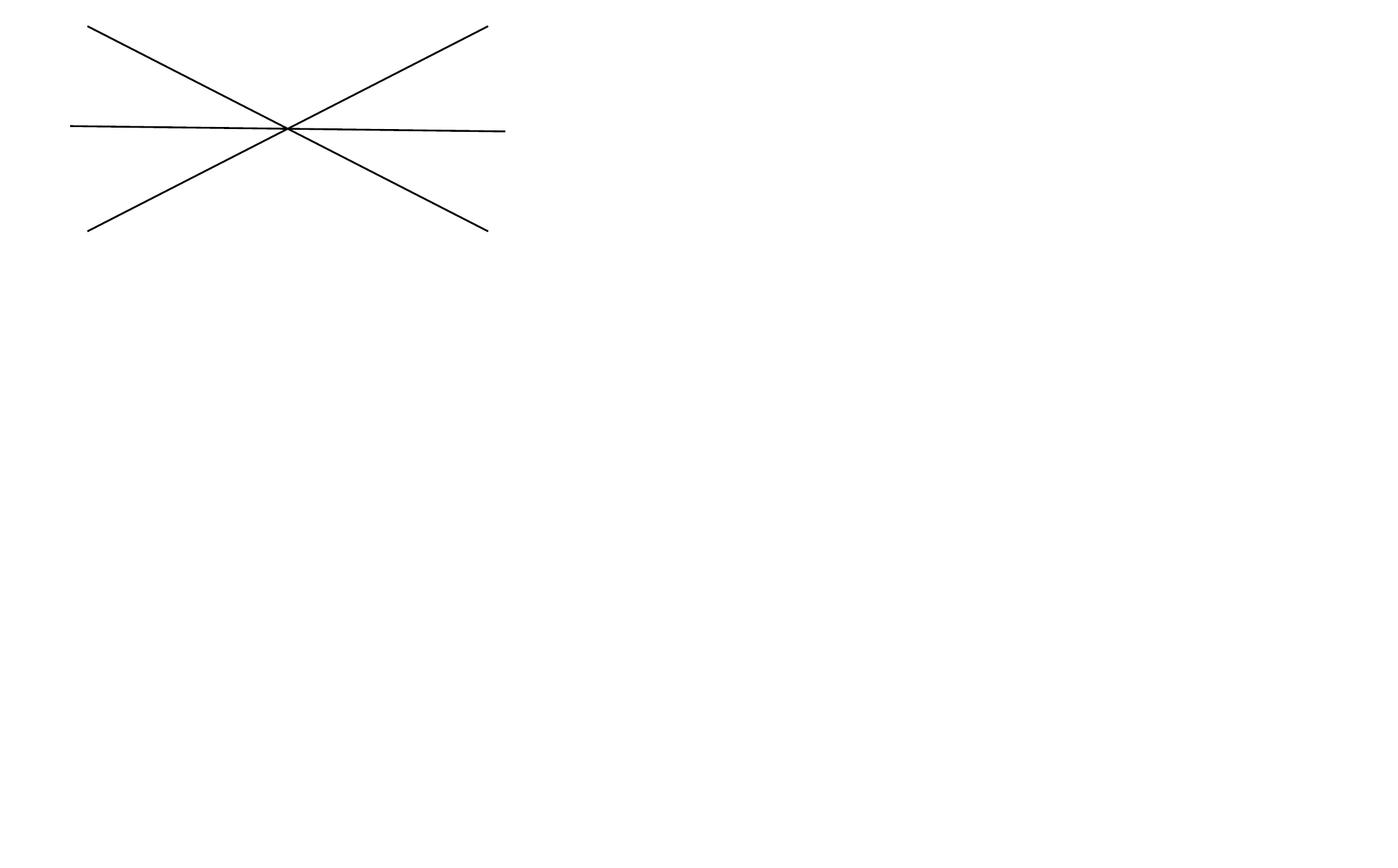
\caption{Proof of Theorem \ref{unlema}, cases (2a), (2b) and (2c). The left column shows part of the triangulation, the middle column, the module $M(r_i,b_j)$ and the right column shows $\Omega M(r_i,b_j)=M(r_{j-1},b_i)$.}
\label{figu2}
\end{figure}
\begin{figure}\label{fig3}
\centering
\def\svgwidth{4.8in}
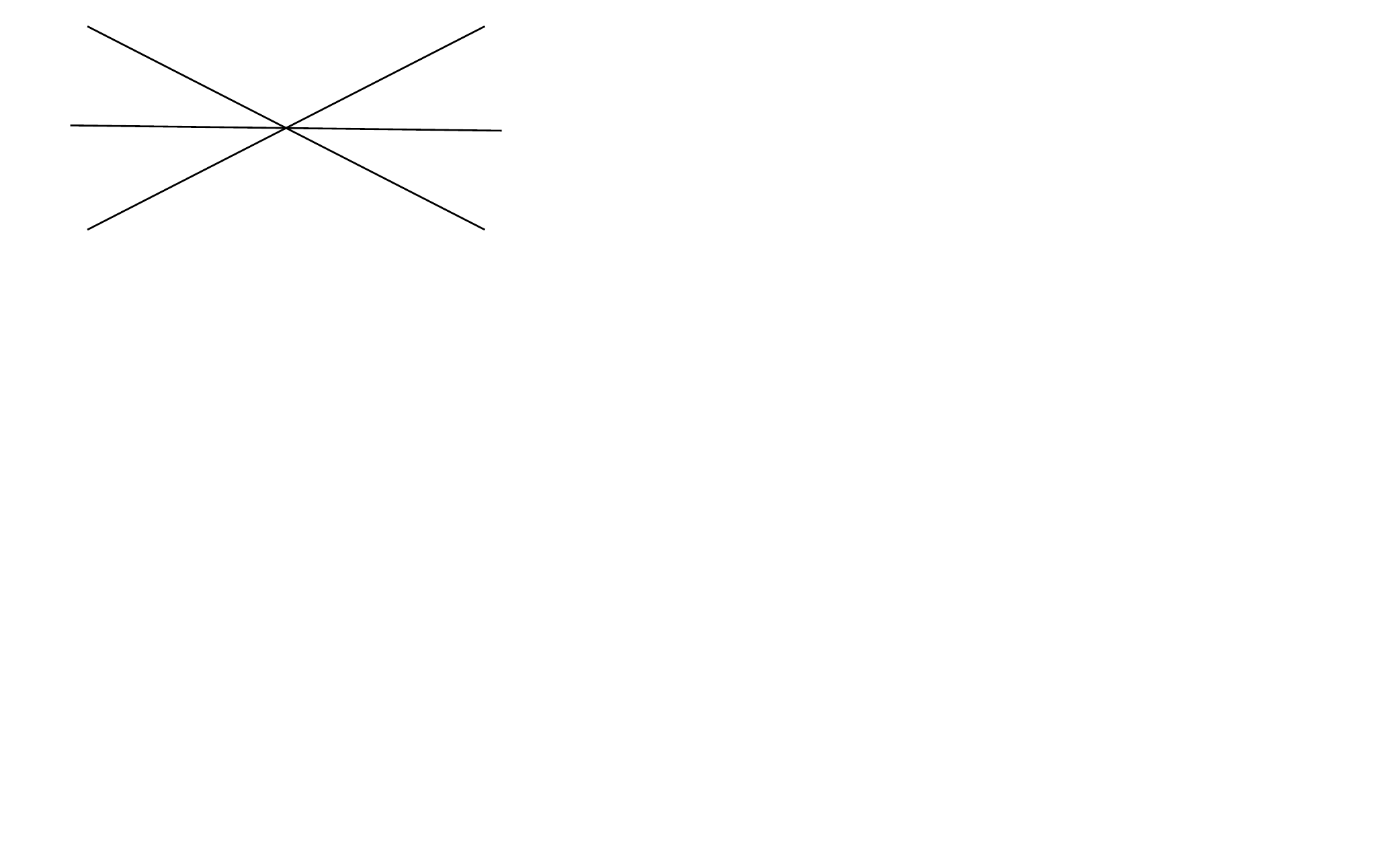
\caption{Proof of Theorem \ref{unlema}, cases (3a), (3b) and (3c). The left column shows part of the triangulation, the middle column, the module $M(r_i,b_j)$ and the right column shows $\Omega M(r_i,b_j)=M(r_{j-1},b_i)$.}
\label{figu3}
\end{figure}
\begin{figure}\label{fig3}
\centering
\def\svgwidth{4.8in}
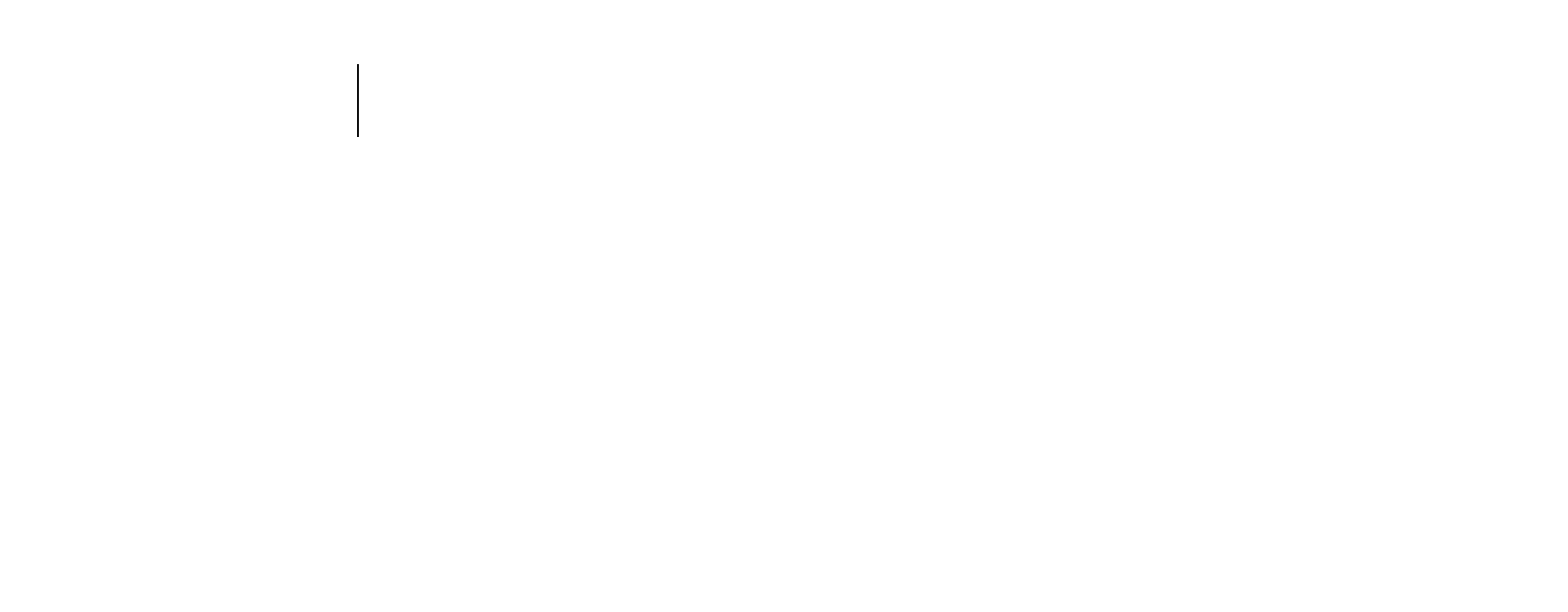
\caption{Proof of Theorem \ref{unlema}, cases (d), (e) and (f).}
\label{figu3}
\end{figure}


\begin{thebibliography}{}

\bibitem{Am} C. Amiot, Cluster categories for algebras of global dimension 2 and quivers with potential, 
\emph{Ann. Inst. Fourier} {\bf 59} no 6, (2009), 2525--2590. 

\bibitem{Am11} C. Amiot, On generalized cluster categories, \emph{Representations of Algebras and Related Topics},  EMS Ser. Congr. Rep., Eur. Math. Soc., Z\"urich, 16-02, (2011), 1--53.

\bibitem{ABCP} I. Assem, T. Br\"ustle, G. Charbonneau-Jodoin and
 P.G. Plamondon, Gentle algebras arising from surface triangulations,
\emph{Algebr. Number Theory} {\bf 4}, (2010), no. 2, 201--229.



\bibitem{ASS} I. Assem, D. Simson  and A. Skowro\'nski, Elements of the Representation Theory of Associative Algebras, \textit{London Math. Soc. Student Texts} \textbf{65} (2006), Cambridge University Press.

\bibitem{ABr} M. Auslander and M. Bridger, Stable module theory. 
{\it Mem. Amer. Math. Soc.\/}, {\bf 94}, (1969).


\bibitem{ARS} M. Auslander, I. Reiten and S. Smal\o, Representation Theory of Algebras,
\emph{Cambridge studies in advanced mathematics}, {\bf 36}, Cambridge University Press, Cambridge, 1997.




\bibitem{Be00} A. Beligiannis, The homological theory of contravariantly finite subcategories: Auslander-Buchweitz contexts Gorenstein categories and (co-)stabilization,
\emph{Comm. Algebra} {\bf28}, (2000), no. 10, 4547--4596.



\bibitem{BeR} A. Beligiannis and I. Reiten, Homological and homotopical aspects of torsion theories,
\emph{Mem. Amer. Math. Soc.} {\bf 188}, (2007).

\bibitem{BO} P. Bergh and S. Oppermann, Cluster tilting and complexity,
\emph{Int. Math. Res. Not.} {\bf 2011} (2011), no. 22, 5241--5258.


\bibitem{BZ} T. Br\"ustle, J. Zhang, On the cluster category of a marked surface without punctures,
\emph{Algebr. Number Theory} {\bf 5}, (2011), no. 4, 529--566.

\bibitem{BMRRT} A. Buan, R. Marsh, M. Reineke, I. Reiten and G. Todorov, Tilting theory and cluster combinatorics,
\emph{Adv. Math.} {\bf 204}, (2006), no. 2, 572--518.

\bibitem{BMR} A. Buan, R. Marsh and I. Reiten, Cluster-tilted algebras,
\emph{Trans. Amer. Math. Soc.} {\bf 359}, (2007), no. 1, 323--332.



\bibitem{Bu} R. O. Buchweitz, Maximal Cohen-Macaulay modules and Tate-cohomology over Gorenstein rings, \emph{University of Hannover}, preprint (1986).


\bibitem{CCS} P. Caldero, F. Chapoton and R. Schiffler, Quivers with relations arising from clusters ($A_n$ case),
\emph{Trans. Amer. Math. Soc.} {\bf 358}, (2006), no. 3, 1347--1364. 

\bibitem{CSchr} I. Canakci and S. Schroll, Extensions in Jacobian Algebras and Cluster Categories of Marked Surfaces, preprint, {\tt arXiv:1408.2074.}
\bibitem{CGL} X. Chen, S. Geng and M. Lu, The singularity categories of the cluster-tilted algebras of Dynkin type, \emph{Algebr. Represent. Theory} {\bf 18}, (2015), no. 2, 531--554.


\bibitem{DWZ} H. Derksen, J. Weyman and A. Zelevinsky, Quivers with potentials and their representations I: Mutations, \emph{Sel. Math.} {\bf 14}, (2008), no. 1, 59--119.

\bibitem{EJ} E. Enochs and O. Jenda, Relative homological algebra, \emph{de Gruyter Expositions in Mathematics}, {\bf 30}, Walter de Gruyter,  Berlin, 2000.

\bibitem{FZ} S. Fomin and A. Zelevinsky, Cluster algebras I: Foundations, \emph{J. Amer. Math. Soc.} {\bf 15}, (2002), no. 2, 497--529.



\bibitem{GT} A. Gonz{\'a}lez Chaio and S. Trepode, Representation dimension of cluster-concealed algebras,
\emph{Algebr. Represent. Theory} {\bf 16}, (2013), no. 4, 1001--1015.




\bibitem{HL} F. Huard and M. Lanzilotta, Self-injective right artinian rings and Igusa Todorov functions,
\emph{Algebr. Represent. Theory} {\bf 16}, (2013), no. 3, 765--770.
 

\bibitem{IT} K. Igusa and G. Todorov, 
On the finitistic global dimension conjecture for Artin algebras,
\emph{Representations of Algebras and Related Topics, AMS, Fields Inst. Comm.\/} {\bf 45}, (2005), 201--204.




\bibitem{Ka} M. Kalck, Singularity categories of gentle algebras,
\emph{Bull. Lond. Math. Soc.} {\bf 47}, (2015), no. 1, 65--74.

 
\bibitem{KR} B. Keller and I. Reiten, Cluster-tilted algebras are Gorenstein and stably Calabi--Yau,
\emph{Adv. Math.} {\bf 211}, (2007), no. 1, 123--151.




\bibitem{L} D. Labardini-Fragoso, Quivers with potentials associated to triangulated surfaces,
\emph{Proc. Lond. Math. Soc.\/} {\bf 98}, (2009), no. 3, 797--839.



\bibitem{Lad} S. Ladkani, Algebras of quasi-quaternion type,
preprint, {\tt arXiv:1404.6834}.



\bibitem{Ma} G. Mata, Funciones de Igusa-Todorov, Ph.D. Thesis, Universidad de la Rep\'ublica (Uruguay).

\bibitem{QZ} Y. Qui and Y. Zhou, Cluster categories for marked surfaces: punctured case, 
preprint, {\tt arXiv:1311.0010}.

\bibitem{R} C. M. Ringel, On the representation dimension of Artin algebras,
\emph{Bull. Inst. Math. Acad. Sin.\/} (N.S.) {\bf 7}, (2012), no. 1, 33--70. 

\bibitem{RV} I. Reiten, and M. Van den Bergh, Noetherian hereditary abelian categories satisfying Serre duality,
\emph{J. Amer. Math. Soc.} {\bf 15}, (2002), no. 2, 295--366.


\bibitem{S} R. Schiffler, A geometric model for cluster categories of type $D_n$, \emph{J. Alg. Comb.\/} {\bf 27}, no. 1, (2008) 1--21.


\bibitem{S14} R. Schiffler, Quiver representations, \emph{CMS Books in mathematics}, Springer Verlag, (2014).




\bibitem{VD} Y. Valdivieso-D{\'\i}az, Jacobian algebras with periodic module category and exponential growth, {\em J. Algebra\/} {\bf 449} (2016), 163--174.















\end{thebibliography}

\end{document}